\DeclareSymbolFont{bbold}{U}{bbold}{m}{n}
\DeclareSymbolFontAlphabet{\mathbbold}{bbold}
\def\qmod#1#2{{\hbox{}^{\displaystyle{#1}}}\!\big/\!\hbox{}_{
\displaystyle{#2}}}
 \def\psp#1#2%
 \def\psb#1#2%
 \def\pscr#1#2#3%
\def\C{{\mathbb C}}
\def\N{{\mathbb N}}
\def\P{{\mathbb P}}
\def\Z{{\mathbb Z}}
\def\union{\mathop{\bigcup}}
\def\textmap#1{\mathop{\vbox{\ialign{
                                  ##\crcr
      ${\scriptstyle\hfil\;\;#1\;\;\hfil}$\crcr
      \noalign{\kern 1pt\nointerlineskip}
      \rightarrowfill\crcr}}\;}}
\def\bigtextmap#1{\mathop{\vbox{\ialign{
                                  ##\crcr
      ${\hfil\;\;#1\;\;\hfil}$\crcr
      \noalign{\kern 1pt\nointerlineskip}
      \rightarrowfill\crcr}}\;}}
\newcommand{\cal}{\mathcal}
\def\textlmap#1{\mathop{\vbox{\ialign{
                                  ##\crcr
      ${\scriptstyle\hfil\;\;#1\;\;\hfil}$\crcr
      \noalign{\kern-1pt\nointerlineskip}
      \leftarrowfill\crcr}}\;}}
\def\ag{{\mathfrak a}}
\def\mg{{\mathfrak m}}
\def\Qg{{\mathfrak Q}}
\def\Ec{\mathcal{E}}
\def\Oc{\mathcal{O}}
\theoremstyle{remark}
\newtheorem{ex}{Example}[section]
\newtheorem{sz}{Satz}[section]
\theoremstyle{remark}
\theoremstyle{plain}
\newtheorem{re}[sz]{Remark} 
\newtheorem{thr}[sz]{Theorem}
\newtheorem{pr}[sz]{Proposition}
\newtheorem{co}[sz]{Corollary}
\newtheorem{dt}[sz]{Definition}
\newtheorem{lm}[sz]{Lemma}
\def\End{\mathrm {End}}
\def\Aut{\mathrm {Aut}}
\def\GL{\mathrm {GL}}
\def\Pic{\mathrm {Pic}}
\def\deg{\mathrm {deg}}
\def\id{ \mathrm{id}}
\def\im{\mathrm{im}}
\def\rk{\mathrm {rk}}
\def\ev{\mathrm{ev}}
\def\coker{\mathrm{coker}}
\def\Ext{\mathrm{Ext}}
\newcommand\smvee{{\hskip -0.1ex \raise 0.2ex\hbox{$\scriptscriptstyle\vee$}}\hskip -0,3ex}
\def\trp#1{\tensor[^{\mathrm{t}}]{#1}{}}
\def\edf{\coloneq}
\def\Coker{\mathrm{Coker}}
\def\Aff{{\rm{Aff}}}
\def\crk{\mathrm{crank}}
\def\Ann{\mathrm{Ann}}
\begin{document}

\title{Affine holomorphic bundles over $\P^1_\C$ and apolar ideals}

\author{Naoufal Bouchareb}
\address{Aix Marseille Univ, CNRS, I2M, Marseille, France.}
\email[Bouchareb Naoufal]{noufelbouchareb@hotmail.fr}

\begin{abstract}

We study the classification of affine holomorphic bundles over a compact complex manifold $X$ in general,  and we apply the general theory to the case  $X=\P^1_\C$. We study  the moduli space of framed, non-degenerate rank 2 affine bundles over $\P^1_\C$ whose linearisation, viewed as locally free sheaf,  is isomorphic to    $ {\cal O}_{\P^1_\C}(n_1)\oplus {\cal O}_{\P^1_\C}(n_2)$ where $n_1>n_2$. We show that this moduli space can be identified with the ``topological cokernel" of a morphism of linear spaces  over the projective space $\P(\C[X_0,X_1]_{l})$ of binary forms of degree $l\edf -2-n_2$, in particular it fibres over this projective space with vector spaces as fibres. We show that the stratification of $\P(\C[X_0,X_1]_{l})$ defined by the level sets of the fibre dimension map is determined explicitly by $d\edf n_1-n_2$ and the cactus rank stratification of $\P(\C[X_0,X_1]_{l})$.     
 \end{abstract}

 \subjclass[2020]{32L05, 32L10, 32G13, 14D20}

\maketitle

\tableofcontents

\section*{Acknowledgements}

I am indebted to my PhD advisor, Andrei Teleman for suggesting me the problems studied in this article and many useful ideas. I am also grateful to Alexandru Dimca for pointing me out the relevance of the theory of apolar ideals, the role of the cactus rank, and for directing me to the very useful articles  \cite{DS1}, \cite{DS2}. I thank professors Karl Oeljeklaus, Anne Pichon and Frédéric Mangolte for their interest in my work and for their encouragement and support.

\section{Introduction}\label{intro}

\subsection{Previous work on holomorphic affine bundles. Our goals}

In his famous work \cite{We}, Weil introduces and studies affine line bundles over an algebraic curve $C$. Using the terminology of this article (see section \ref{prelim}) the objects studied by Weil are algebraic affine line bundles over    $C$ whose linearisation is isomorphic to the line bundle associated with a divisor $D$ on $C$.  His classification result \cite[section 4]{We} can be recovered as a special case  of (the algebraic-geometric analogue of) Theorem \ref{ClassTh-intro} in this article, taking  into account Serre duality.  Since Weil's method does not generalise to arbitrary dimension,  the proof of our Theorem \ref{ClassTh-intro}  uses different ideas and techniques.
\vspace{2mm}

Affine line bundles in the holomorphic category have been considered in Enoki's article about class VII surfaces with curves \cite{En}. In this article the author proves that any minimal class VII surface with $b_2>0$ which contains a non-zero homologically trivial  divisor can be obtained as a compactification of the total space of an  affine line bundle with negative degree over an elliptic curve. The compactifying divisor is a homologically trivial cycle of rational curves.
\vspace{2mm}

The classification of affine line bundles in the complex geometric category has been studied by Plechinger \cite{Pl1}, \cite{Pl2}. His main results are:

\begin{itemize} 
\item A generalisation (with different methods)  of Weil theorem for arbitrary complex manifolds. 
\item Classification results for framed affine line bundles on compact complex manifolds. In Plechniger's formalism, an $x_0$-framed affine line bundle on $X$ is a pair $(A,\varepsilon)$, where $A$ is an affine   on $X$ and $\varepsilon$ is  a linear isomorphism between  $\C$ and the fibre $L(A)_{x_0}$ of the linearisation $L(A)$ of $A$ at a fixed point $x_0\in A$. He gives a sufficient and necessary condition for the existence of a moduli space in the sense of classical complex geometry  which classifies $x_0$-framed affine line bundles $A$ on $X$  with fixed Chern class $c_1(L(A))=c\in H^2(X,\Z)$,   up to isomorphism.    If this existence condition is satisfied,  the obtained moduli space is a linear space over the component $\Pic^c(X)$ of the Picard group $\Pic(X)$.  The role of the framing is clear: the moduli space of non-framed affine line bundles with $c_1(L(A))=c$ can be recovered as the (highly non-Hausdorff) $\C^*$-quotient of the moduli space of framed line bundles.

\item For the general case, Plechinger constructed and studied  the holomorphic stack which classifies framed  and non-framed holomorphic affine line bundles  which replaces (in the framed case) the classical moduli space when the existence condition is not satisfied. 
  
\end{itemize}

Our main results (see section \ref{prelim} for details) concern the classification of higher rank  holomorphic  affine bundles over compact complex manifolds. We will start with a general classification theorem for affine bundles whose linearisation belongs to a fixed isomorphism class (Theorem \ref{ClassTh-intro}). 
We will illustrate this general theorem in the special case $X=\P^1_\C$ (see Proposition \ref{general-case-prop}), which shows already that, even on the simplest Riemann surface, the classification of affine bundles is much more complicated than the classification of vector bundles.

Afterwards we will focus on affine rank 2-bundles on a Riemann surface $C$, more precisely on such affine bundles on $C$ with {\it unstable} (i.e. not semi-stable) linearisation. We have two  reasons for this choice: 
\begin{itemize}
\item We have a natural way to frame such affine bundles by adding a framing parameter which varies, as in Plechinger's formalism, in a $\C^*$-torsor: we will consider pairs $(A,\varepsilon)$ consisting  of a rank 2-affine bundle on $C$ and an isomorphism $\varepsilon: \C\textmap{\simeq}  D_{L(A),x_0}$, where  $D_{L(A)}$ is the maximal destabilising line bundle of the linearisation $L(A)$, i.e. the first term of its Harder-Narasimhan filtration (see \cite[Theorem 2.5.2.3]{Sch}  and \cite[I.1.3]{HuLe} for the algebraic framework). 
\item In the last section we will consider affine rank 2-bundles on $\P^1_\C$, which admits no stable rank 2 vector bundles. 
\end{itemize}

Note that for such affine bundles we have a natural non-degeneracy condition: an affine rank 2 bundle with unstable linearisation on $C$ will be called {\it non-degenerate}  if the image of the invariant $h_A\in H^1(X, {\cal L}(A))$ of $A$  in the 1-cohomology of the minimal destabilising quotient ${\cal Q}_{L(A)}\edf {\cal L}(A)/{\cal D}_{L(A)}$ of the locally free sheaf ${\cal L}(A)$ does not vanish. We will study in detail the moduli space -- regarded as a topological space -- of framed non-degenerate holomorphic affine rank 2-bundles on $\P^1_\C$ whose linearisation, viewed as locally free sheaf,  is isomorphic to    $ {\cal O}_{\P^1_\C}(n_1)\oplus {\cal O}_{\P^1_\C}(n_2)$ where $n_1>n_2$.  We will see that this moduli space can be identified with the topological cokernel (see section \ref{coker-appendix}) of a   morphism of linear spaces   over the projective space $\P(\C[X_0,X_1]_{l})$ of binary forms of degree $l\edf -2-n_2$. In particular (although non-Hausdorff), this moduli space fibres naturally over this projective space with vector spaces as fibres. We will give an explicit formula for the dimension of the fibre over a point $[P]\in \P(\C[X_0,X_1]_{-2-n_2})$ in terms of $d\edf n_1-n_2$ and the {\it cactus rank} $\crk(P)$ of $P$, which is a fundamental invariant in the theory of binary forms  (see section \ref{apolar-section} in the appendix). 

Our result highlights an unexpected relationship between the classification problem for affine rank 2-bundles over $\P^1_\C$ and the theory of apolar ideals (see section \ref{apolar-section}),  and  also gives a strong motivation for studying and describing geometrically the cactus rank stratification of the projective space  $\P(\C[X_0,X_1]_{l})$ of binary forms of degree $l$ (see section \ref{cactus-rk-strat-small-l}).

\subsection{Preliminaries and main results}\label{prelim}
Let $\Aff(r,\C)$ be the group of affine transformations of $\C^r$.
Let $A$ and $X$ be  complex manifolds, and  $\pi: A \longrightarrow X$  a holomorphic map. A local trivialisation with standard fibre $\C^r$ of $\pi$ is a biholomorphism
 $$
 \tau=(\tau_1,\tau_2): \pi^{-1}(U_\tau) \longrightarrow U_\tau \times \mathbb{C}^r
 $$
 such that $\tau_1=\pi|_{\pi^{-1}(U)}$. This implies that the restriction $\tau_x\coloneq \tau_2|_{A_x}:A_x\to\C^r$ is a biholomorphism. Two such local trivialisations $\tau$, $\theta$   are said to be $\Aff(r,\C)$-compatible, if for every $x \in U_\tau \cap U_\theta$ the map 
 $$\tau_x \circ \theta_x^{-1}: \mathbb{C}^r \longrightarrow \mathbb{C}^r$$
 is an affine transformation, i.e. it belongs to $\Aff(r,\C)$. If this is the case, we obtain a comparison map
 $$
 U_\tau \cap U_\theta\to \Aff(r,\C),
 $$ 
 which is holomorphic.

Following the general formalism present in \cite[section 1.2.1]{Sch} we define: A bundle atlas with structure group $\Aff(r,\C)$ on $\pi$ is a set  $\mathcal{T}$ of local trivialisations with standard fibre $\C^r$ of $\pi$ satisfying the properties:

\begin{enumerate}
\item $\bigcup_{\tau\in {\cal T}} U_\tau=X$.
\item Any two elements of ${\cal T}$ are $\Aff(r,\C)$-compatible.
\end{enumerate}

\begin{dt} 
A holomorphic affine bundle of rank $r$ on $X$ is a couple 
$$(\pi: A \to X, \mathcal{T})$$
 consisting of a holomorphic map $\pi$ and a bundle  atlas ${\cal T}$ with structure group $\Aff(r,\C)$ on $\pi$ which  is maximal with respect to inclusion.
	
\end{dt}

The classical theory of  principal bundles in the  differential geometric framework (see for instance \cite{KN}, \cite{Te}) extends naturally in the holomorphic category. 
With any affine bundle $(\pi : A \to  X, \mathcal{T})$ one can associate an $\Aff(r,\C)$-principal  bundle as follows: for any $x\in X$  we define 
$$
P_{A,x}\coloneq \{f: \C^r \to  A_x|\ \hbox{ for any }\tau\in {\cal T}\hbox{ with }x\in U_\tau, \hbox{ we have }   \tau_x \circ f \in \Aff(r,\C)\}.
$$
The disjoint union $P_A\coloneq \coprod_{x\in X}P_{A,x}$
has a natural complex manifold structure and comes with an obvious $\Aff(r,\C)$-right action such that the natural map $P_\pi: P_A\to X$ becomes a holomorphic $\Aff(r,\C)$-principal bundle.

Conversely, for any holomorphic  principal $\Aff(r,\C)$-bundle $P$ on $X$ we define the  affine bundle  
$$
A^P\edf P\times_{\Aff(r,\C)}\C^r
$$
to be the bundle associated with $P$ and the standard  action of its structure group $\Aff(r,\C)$ on $\C^r$. Similarly to the case of vector bundles, we have 
%
%
\begin{re}
The functors $A\mapsto P_A$, $P\mapsto A^P$ define an   equivalence between the groupoid of affine rank $r$-bundles on $X$   and the groupoid of principal $\Aff(r,\C)$-bundles on $X$.	
\end{re}

We recall that the set of local sections of a principal bundle can always be identified with the set of its local trivialisations.

\begin{re} Let $(A,{\cal T})$ be a holomorphic  affine rank $r$ bundle on $X$, and  ${\cal S}$ be the set of local sections of the associated principal bundle $P_A$. The map ${\cal T}\to {\cal S}$ defined by
$$
{\cal T}\ni \tau\mapsto (U_\tau\ni x\mapsto \tau_x^{-1}\in P_{A,x})
$$
is a bijection.
\end{re}
Therefore ${\cal T}$ can be identified with the set of local trivialisations of $P_A$.
\vspace{1mm}\\

Let $(\pi:A\to X,{\cal T})$ be an affine bundle on $X$ and $x\in X$. A biholomorphic automorphism $u$ of $A_x$ will be called {\it translation} if there exists (or, equivalently, if for any) $\tau\in {\cal T}$ with $x\in U_\tau$ the composition $\tau_x \circ u \circ \tau_x^{-1}$ is a translation of $\C^r$. The set $L(A)_x$ of translations of $A_x$ has the natural structure of an $r$-dimensional complex vector space, and the natural  action of $L(A)_x$ on $A_x$ is free and transitive. In other words $A_x$  has the natural structure of a  complex affine space with associated (model) vector space $L(A)_x$.

The disjoint union $L(A)\edf \coprod_{x\in X} L(A)_x$
has the natural structure of a holomorphic vector bundle, which can be identified with the associated vector bundle $P_A\times_\rho \C^r$, where $\rho:\Aff(r,\C)\to\GL(r,\C)$ is the standard linear representation   which assigns to each affine transformation its linear part. 
 
\begin{dt}
The holomorphic vector bundle $L(A)$ will be called the linearisation of $A$, and its associated locally free sheaf will be denoted by ${\cal L}(A)$.
\end{dt}

We will also need the representation
$$
\tilde\rho:\Aff(r,\C)\to \GL(r+1,\C)
$$
of $\Aff(r,\C)$ defined by
$$
\tilde\rho(z\mapsto Mz+\zeta)\edf \begin{pmatrix}
1 &0\\ 
\zeta & M	
\end{pmatrix}.
$$
The image  $\tilde \Aff(r,\C)$  of this group monomorphism is a closed affine subgroup of $\GL(r+1,\C)$. The associated holomorphic vector bundle
$$
\tilde L(A)\edf P_A\times_{\tilde \rho}\C^{r+1}
$$
is a holomorphic rank $(r+1)$-vector bundle on $X$ which comes with a canonical $\tilde \Aff(r,\C)$-structure (reduction of structure group from $\GL(r+1,\C)$ to $\tilde \Aff(r,\C)$), see for instance \cite[Définition 2.3.1]{Te}. We will denote by $\tilde {\cal L}(A)$ the locally free sheaf associated with $\tilde L(A)$.

We recall that 

\begin{re} \label{rem-p0-phi}
\begin{enumerate}
\item The subgroup 	$\tilde \Aff(r,\C)\subset \GL(r+1,\C)$ leaves invariant the projection
$$
p_0:\C^{r+1}\to \C, \ p_0\begin{pmatrix}
	z_0\\\vdots\\ z_r
\end{pmatrix}\edf z_0.
$$
\item Let $E$ be a holomorphic complex vector bundle of rank $r+1$ on $X$. The data of an 	$\tilde \Aff(r,\C)$-structure   on $E$ is equivalent to the data of a bundle epimorphism $\varphi:E\to X\times\C$. 
\end{enumerate}
\end{re}

Concerning (2): for a point $x\in X$, the linear epimorphism $\varphi_x:E_x\to \C$ is $p_0\circ \tau_x$, where $\tau:E_{U_\tau}\to U_\tau\times \C^{r+1}$ is a local vector bundle trivialisation of $E$ which is compatible with the fixed $\tilde \Aff(r,\C)$-structure.  By (1) it follows that $p_0\circ \tau_x$ is independent of the choice of $\tau$.

Applying  this remark  to $\tilde L(A)$ we obtain a canonical bundle epimorphism
$$
\varphi_A:\tilde L(A)\to X\times\C 
$$
induced by the canonical 	$\tilde \Aff(r,\C)$-structure of $\tilde L(A)$. We will denote by the same symbol the associated sheaf epimorphism $\tilde {\cal L}(A)\to {\cal O}_X$.

\begin{dt}
An augmented rank $r+1$ holomorphic vector bundle on $X$ is a  	pair  $(E,\varphi)$ consisting of a holomorphic rank $r+1$-vector bundle $E$ on $X$  and a holomorphic bundle epimorphism $\varphi:E\to  X\times\C$ onto the trivial line bundle $X\times\C$ on $X$.
\end{dt}

Following \cite{Pl1}, \cite{Pl2} we will denote by $E_X^r$ the category of augmented rank $r+1$ holomorphic vector bundles on $X$.

\begin{re}\label{A-ag} Let $\ag=(E,\varphi)$ be an   augmented rank $r+1$ holomorphic vector bundle on $X$. Then the natural map $A_\ag\edf \varphi^{-1}(X\times\{1\})\to X$ comes with a natural structure ${\cal T}_\ag$ of a holomorphic affine bundle of rank $r$ on $X$ whose associated linearisation is canonically isomorphic to $\ker(\varphi)$.	
\end{re}

\begin{proof} Note first that the fibre  $A_{\ag,x}$ of $A_\ag$ over a point $x\in X$ is an  affine hyperplane in $E_x$, so it comes with a natural affine space space structure with model space $\ker(\varphi)_x$.

We define ${\cal T}_{\ag}$ to be the set of holomorphic local trivialisations $\tau$ of $A_\ag$ with the property that for any $x\in U$ the map $\tau_x:A_{\ag,x} \to \C^r$ is affine, i.e. it satisfies the identity
$$
\tau_x(\lambda u+(1-\lambda)v)=\lambda\tau_x(u)+(1-\lambda)\tau_x(v)
$$
for $u$, $v\in A_{\ag,x}$, $\lambda\in\C$. It is easy to check that ${\cal T}_\ag$ is a maximal bundle atlas  with structure group $\Aff(r,\C)$ on $A_\ag$.	
\end{proof}
Moreover one has (see \cite{Pl1}, \cite{Pl2}):
\begin{pr} \label{AffBdls-AugmBdls}
The functors $A\mapsto (\tilde L(A),\varphi_A)$, $\ag\mapsto 	(A_\ag,{\cal T}_\ag)$ define an equivalence of groupoides between the groupoid  of holomorphic affine rank $r$-bundles on $X$ and the groupoid of augmented holomorphic rank $(r+1)$-vector bundles on $X$.

Via the first functor the linearisation $L(A)$ of an affine bundle $A$ is canonically isomorphic with $\ker(\varphi_A)$. 
\end{pr}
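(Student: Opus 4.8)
The plan is to establish the equivalence of groupoids in Proposition~\ref{AffBdls-AugmBdls} by verifying that the two functors are mutually quasi-inverse. First I would make the two assignments genuinely functorial: on objects the constructions $A\mapsto(\tilde L(A),\varphi_A)$ and $\ag\mapsto(A_\ag,{\cal T}_\ag)$ are already given in Remark~\ref{A-ag} and the paragraph preceding the definition of augmented bundles, so the remaining task is to define them on morphisms. A morphism of affine bundles $A\to A'$ is a fibre-preserving biholomorphism that is affine on fibres and compatible with the atlases; such a map induces a linear map on each associated principal bundle and hence, via the representation $\tilde\rho$, a morphism $\tilde L(A)\to\tilde L(A')$ of rank $(r+1)$-bundles. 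I would check that this induced morphism commutes with the canonical epimorphisms $\varphi_A$, $\varphi_{A'}$ onto $X\times\C$, which follows from part~(1) of Remark~\ref{rem-p0-phi} (the subgroup $\tilde\Aff(r,\C)$ preserves $p_0$). Conversely, a morphism $(E,\varphi)\to(E',\varphi')$ in $E_X^r$ is a bundle map intertwining the epimorphisms, and it therefore carries the affine hyperplane $\varphi^{-1}(X\times\{1\})$ into $(\varphi')^{-1}(X\times\{1\})$ affinely, yielding a morphism $A_\ag\to A_{\ag'}$ compatible with the induced atlases.

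Next I would verify the two natural isomorphisms of composite functors with the identities. For the composite $\ag\mapsto(A_\ag,{\cal T}_\ag)\mapsto(\tilde L(A_\ag),\varphi_{A_\ag})$, I would exhibit a canonical isomorphism $\tilde L(A_\ag)\cong E$ intertwining $\varphi_{A_\ag}$ with $\varphi$. The point is that the fibre $A_{\ag,x}=\varphi_x^{-1}(1)$ is an affine hyperplane in $E_x$ whose associated model space is $\ker(\varphi)_x$; reconstructing $\tilde L$ from the principal bundle $P_{A_\ag}$ via $\tilde\rho$ returns, by part~(2) of Remark~\ref{rem-p0-phi}, precisely the augmented bundle $(E,\varphi)$ back, since the $\tilde\Aff(r,\C)$-structure on $E$ determined by $\varphi$ is exactly the one used to build the atlas ${\cal T}_\ag$. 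For the other composite $A\mapsto(\tilde L(A),\varphi_A)\mapsto(A_{(\tilde L(A),\varphi_A)},{\cal T})$, I would identify $A$ with $\varphi_A^{-1}(X\times\{1\})\subset\tilde L(A)$ by sending a point $a\in A_x$ to the vector in $\tilde L(A)_x$ whose $\tau$-coordinates are $(1,\tau_x(a))$; invariance of this assignment under change of trivialisation is again guaranteed by the form of $\tilde\rho$ and the preservation of $p_0$. Checking that this identification is a biholomorphism respecting the affine structures and the atlases, and that it is natural in $A$, completes this half.

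The main obstacle I expect is the bookkeeping around the two $\tilde\Aff(r,\C)$-structures, namely showing that the structure group reduction recovered from $P_{A_\ag}$ coincides \emph{on the nose} (up to canonical isomorphism, not merely abstractly) with the one encoded by $\varphi$, and dually that the affine atlas ${\cal T}_\ag$ reconstructed from $(\tilde L(A),\varphi_A)$ agrees with the original maximal atlas ${\cal T}$. Both reduce to the computation that, under a compatible local trivialisation, the transition functions of $\tilde L(A)$ lie in the image $\tilde\Aff(r,\C)$ and have first row $(1,0,\dots,0)$, so that the constant section $1\in\C$ of the quotient line bundle is preserved; this is where Remark~\ref{rem-p0-phi} does the real work. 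Once the local pictures are pinned down, naturality in $X$-morphisms is a routine diagram chase, and the maximality of the atlases ensures there is no ambiguity in the reconstructed affine-bundle structures. I would also remark that the statement about $L(A)\cong\ker(\varphi_A)$ is immediate from the construction, since $\rho$ is the restriction of $\tilde\rho$ to the linear part and $\ker(\varphi_A)_x$ is exactly the model space $L(A)_x$ of translations.
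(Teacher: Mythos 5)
Your proposal is correct, and it supplies precisely the verification the paper omits: the paper states Proposition \ref{AffBdls-AugmBdls} with a bare citation to \cite{Pl1}, \cite{Pl2}, proving in-text only the construction of $(A_\ag,{\cal T}_\ag)$ in Remark \ref{A-ag}, so there is no detailed proof in the paper to diverge from. Your route --- extend both assignments to morphisms, then check the two composites are naturally isomorphic to the identities, with Remark \ref{rem-p0-phi} matching the two $\tilde\Aff(r,\C)$-structures --- is the standard quasi-inverse argument and uses exactly the preparatory material the paper sets up ($\tilde\rho$, $P_A$, Remarks \ref{A-ag} and \ref{rem-p0-phi}). The one step you leave at the level of an appeal to Remark \ref{rem-p0-phi}(2), and which deserves to be written out, is the canonical isomorphism $\tilde L(A_\ag)\textmap{\simeq}E$ for $\ag=(E,\varphi)$: each frame $f\in P_{A_\ag,x}$, i.e.\ an affine isomorphism $f:\C^r\to A_{\ag,x}=\varphi_x^{-1}(1)\subset E_x$, extends uniquely to a linear isomorphism $\hat f:\C^{r+1}\to E_x$ with $\varphi_x\circ\hat f=p_0$ (explicitly $\hat f(z_0,z)=z_0f(0)+\ell(z)$, where $\ell$ is the linear part of $f$, taking values in $\ker(\varphi)_x$), and the assignment $[f,v]\mapsto\hat f(v)$ on $\tilde L(A_\ag)=P_{A_\ag}\times_{\tilde\rho}\C^{r+1}$ is well defined precisely because $\widehat{f\circ a}=\hat f\circ\tilde\rho(a)$ for $a\in\Aff(r,\C)$; this computation is what makes your claim that the reconstruction ``returns $(E,\varphi)$ back'' rigorous. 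Your treatment of the other composite via the coordinates $(1,\tau_x(a))$ and the block form of $\tilde\rho$ is exactly right (the transition $\tilde\rho(a)$ sends $(1,w)$ to $(1,Mw+\zeta)$, so the embedding is trivialisation-independent), as is your closing observation that $L(A)\simeq\ker(\varphi_A)$ is immediate because $\tilde\rho$ restricted to $\ker(p_0)=\{0\}\times\C^r$ acts through $\rho$, whence $\ker(\varphi_A)=P_A\times_\rho\C^r=L(A)$.
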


We will denote by $(e_A)$ the  obtained extension of locally free sheaves
$$
0\to \ker(\varphi_A)={\cal L}(A)\hookrightarrow \widetilde{{\cal L}}(A)\textmap{\varphi_A}  {\cal O}_X\to 0 \eqno{(e_A)}
$$
which will play an importent role in this article. We will denote by 
$$h_A\in H^1(X,{\cal L}(A))=\Ext^1({\cal O}_X,{\cal L}(A))$$
 the  invariant of $(e_A) $ regarded as extension of ${\cal O}_X$ by ${\cal L}(A)$ (see section \ref{ClassTh-section} for details).

\vspace{2mm}

The main results of this paper are: 
\vspace{2mm}

  In section \ref{ClassTh-section} we will prove  the following classification theorem (see  \cite[Theorem 9]{Pl2} for the case $r=1$):

\begin{thr} \label{ClassTh-intro}
Let $X$ be a complex manifold, $E$ a holomorphic vector bundle of rank $r$ on $X$, ${\cal E}$ the associated locally free sheaf and $\Aut(E)=\Aut({\cal E})$ its automorphism group. The map $A\mapsto h_A \in H^1(X,{\cal L}(A))$ induces a bijection  between the set of isomorphism classes of holomorphic affine  rank $r$-bundles $A$  on $X$ with $L(A)\simeq E$ and the quotient $H^1(X,{\cal E})/\Aut(E)$.
\end{thr}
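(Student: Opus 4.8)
The plan is to deduce the statement from the classical classification of extensions by $\Ext^1$, combined with the groupoid equivalence of Proposition~\ref{AffBdls-AugmBdls}; the one genuinely delicate point will be to identify exactly which sheaf automorphisms are induced by an isomorphism of augmented bundles. First I would make the classifying map explicit. Using $H^1(X,\mathcal{E})=\Ext^1(\mathcal{O}_X,\mathcal{E})$ as recorded above, an affine bundle $A$ with $L(A)\simeq E$ together with a chosen sheaf isomorphism $\iota\colon \mathcal{L}(A)\xrightarrow{\ \simeq\ }\mathcal{E}$ yields, by pushing the extension $(e_A)$ forward along $\iota$, a class $\iota_*h_A\in\Ext^1(\mathcal{O}_X,\mathcal{E})$. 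Since $\iota$ is determined only up to composition with an element of $\Aut(\mathcal{E})=\Aut(E)$, and push-forward along such an automorphism is precisely the defining $\Aut(E)$-action on $\Ext^1(\mathcal{O}_X,\mathcal{E})$, the image of $\iota_*h_A$ in $H^1(X,\mathcal{E})/\Aut(E)$ is independent of $\iota$. This produces a candidate map $\Phi$ from isomorphism classes to $H^1(X,\mathcal{E})/\Aut(E)$; that $\Phi$ depends only on the isomorphism class of $A$ will fall out of the injectivity argument.

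Surjectivity is immediate: any $h\in\Ext^1(\mathcal{O}_X,\mathcal{E})$ is represented by an extension $0\to\mathcal{E}\to\mathcal{F}\to\mathcal{O}_X\to 0$, whose quotient map makes $\mathcal{F}$ an augmented bundle, hence by Remark~\ref{A-ag} and Proposition~\ref{AffBdls-AugmBdls} equal to $(\widetilde{\mathcal{L}}(A),\varphi_A)$ for an affine bundle $A$ with $L(A)\simeq E$ and $\Phi(A)=[h]$. For the forward half of injectivity, note that by Proposition~\ref{AffBdls-AugmBdls} one has $A_1\simeq A_2$ iff there is an isomorphism $\psi\colon \widetilde{\mathcal{L}}(A_1)\to\widetilde{\mathcal{L}}(A_2)$ with $\varphi_{A_2}\circ\psi=\varphi_{A_1}$. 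Such a $\psi$ carries $\ker\varphi_{A_1}$ isomorphically onto $\ker\varphi_{A_2}$ and induces the identity on the common quotient $\mathcal{O}_X$; after transporting the kernels to $\mathcal{E}$ via the fixed identifications, $\psi$ becomes a morphism of the two extensions with left vertical arrow some $g\in\Aut(\mathcal{E})$ and right vertical arrow $\mathrm{id}_{\mathcal{O}_X}$, so bifunctoriality of $\Ext^1$ gives $\Phi(A_2)=g_*\Phi(A_1)$, i.e.\ the classes lie in a single $\Aut(E)$-orbit. I would stress here why the quotient is by $\Aut(E)$ alone and not by $\Aut(E)\times\Aut(\mathcal{O}_X)$: the augmentation takes values in the \emph{fixed} trivial line bundle $X\times\C$ and is preserved on the nose by $\psi$, so no automorphism of $\mathcal{O}_X=X\times\C$ ever intervenes.

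Conversely, if (after fixing identifications) $h_2=g_*h_1$ for some $g\in\Aut(\mathcal{E})$, then the push-forward of $(e_{A_1})$ along $g$ is an extension of class $h_2$ admitting an isomorphism from $\widetilde{\mathcal{L}}(A_1)$ covering $\mathrm{id}_{\mathcal{O}_X}$ (short five lemma, since $g$ is an isomorphism), whereas the uniqueness up to equivalence of an extension with prescribed $\Ext^1$-class supplies a further isomorphism onto $\widetilde{\mathcal{L}}(A_2)$, again covering $\mathrm{id}_{\mathcal{O}_X}$. Their composite is an isomorphism of augmented bundles, hence $A_1\simeq A_2$; this establishes simultaneously the injectivity of $\Phi$ and its well-definedness on isomorphism classes, completing the bijection.

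The main obstacle is precisely the bookkeeping of the third step: reading off, from an isomorphism of augmented bundles, the induced morphism of extensions and translating it through the bifunctoriality of $\Ext^1$ into the $\Aut(E)$-action, all while keeping the quotient term $\mathcal{O}_X$ rigid. Once the dictionary ``augmented bundles $\leftrightarrow$ extensions of $\mathcal{O}_X$ by $\mathcal{E}$, with the end $\mathcal{O}_X$ frozen and the end $\mathcal{E}$ free'' has been set up carefully, the remaining verifications are the standard facts of extension theory.
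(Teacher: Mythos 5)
Your proof is correct and follows essentially the same route as the paper: both classify via the extension class of $(e_A)$ under the dictionary between affine bundles and augmented bundles (Proposition~\ref{AffBdls-AugmBdls}), with $\Aut(E)$ absorbing the choice of identification $\mathcal{L}(A)\simeq\mathcal{E}$; your ``converse'' paragraph is exactly the content of the paper's Lemma~\ref{nice-lemma}, re-derived inline via uniqueness of extensions with a prescribed class. The only difference is presentational: you spell out the well-definedness direction (an isomorphism of affine bundles forces the classes into one orbit, with $\mathcal{O}_X$ kept rigid), which the paper leaves implicit.
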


In this article, by the moduli space of affine bundles on $X$ with fixed linearisation type $[E]$ we will mean the topological quotient space  $H^1(X,{\cal E})/\Aut(E)$. In a future article we will come back to this statement in the framework of stack theory and compare  the moduli stack of affine bundles with fixed linearisation type $[E]$ and the quotient stack $[H^1(X,{\cal E})/\Aut(E)]$. 

This theorem shows that the classification of holomorphic affine bundles   is much more interesting and complex  than the   classification of holomorphic vector bundles. We will illustrate this contrast in the simplest case $X=\P^1_\C$; in this case the classification of vector bundles  is completely solved by Grothendieck's theorem  and the answer is very simple. The classification of affine bundles on $\P^1_\C$ is much more complex, see  Proposition \ref{general-case-prop}. 

Our next result concerns the classification of (framed) non-degenerate holomorphic  affine bundles $(A,\varepsilon)$ with $L(A)\simeq E$ over a Riemann surface $C$, where $E$ is a fixed unstable rank 2 holomorphic vector bundle on $C$, and $\varepsilon$ is a linear isomorphism  $\C\textmap{\simeq} D_{L(A),x_0}$ onto the fibre of the maximal destabilising line bundle of $E$ at a fixed point $x_0\in C$. Let $D_E$ be the maximal destabilising line bundle of $E$, $\Aut(E)_0$ be the closed subgroup of $\Aut(E)$ consisting of automorphisms which induce the identity on $D_{E}$, ${\cal E}$  the locally free sheaf associated with $E$ and $H^1(C,{\cal E})_0\subset H^1(C,{\cal E})$ the open subspace of cohomology classes which are mapped non-trivially to $H^1(C,{\cal E}/{\cal D}_E)$. Using the terminology introduced in section \ref{AffineRieamannSect}, we will prove:

\begin{thr}
\label{ClassTh2-intro}
Let $C$ be a Riemann surface,  $E$ be a fixed rank 2 unstable vector bundle on $C$, ${\cal E}$ the associated locally free sheaf, $x_0\in C$ and $\varepsilon_0:\C\to D_{E,x_0}$ be a fixed isomorphism. Then: 
\begin{enumerate}
	\item The map $A\mapsto h_A$ induces a bijection  between the set of isomorphism classes of non-degenerate affine holomorphic rank $r$-bundles $A$  on $X$ with $L(A)\simeq E$ and the quotient $H^1(C,{\cal E})_0/\Aut(E)$. 
	\item For an $x_0$-framed affine bundle $(A,\varepsilon)$ with $L(A)\simeq E$ choose an isomorphism $f: L(A)\to E$ such that $f\circ \varepsilon=\varepsilon_0$. 

The map $(A,\varepsilon)\mapsto H^1(f)(h_A)$ induces  a bijection  between the set of isomorphism classes of $x_0$-framed (non-degenerate) affine holomorphic rank $r$-bundles $A$  on $C$ with $L(A)\simeq E$ and the quotient $H^1(C,{\cal E})/\Aut(E)_0$ (respectively  $H^1(C,{\cal E})_0/\Aut(E)_0$). 
	\end{enumerate}
\end{thr}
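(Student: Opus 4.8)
The plan is to obtain the three bijections as refinements of the general classification Theorem \ref{ClassTh-intro}, keeping track of the non-degeneracy condition and of the framing; the only genuinely new ingredient is a rigidity phenomenon forced by the compactness of $C$. I would first record two preliminary observations. Since the maximal destabilising subbundle is canonical, every vector bundle isomorphism $f:L(A)\to E$ maps ${\cal D}_{L(A)}$ isomorphically onto ${\cal D}_E$ and induces an isomorphism ${\cal Q}_{L(A)}\to{\cal Q}_E$; hence $H^1(f)$ intertwines the projections onto $H^1(C,{\cal Q}_{L(A)})$ and $H^1(C,{\cal Q}_E)$. It follows that $A$ is non-degenerate precisely when $H^1(f)(h_A)$ lies in $H^1(C,{\cal E})_0$, a condition independent of $f$. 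Likewise, any $\phi\in\Aut(E)$ preserves ${\cal D}_E$, so $H^1(\phi)$ commutes with the projection to $H^1(C,{\cal Q}_E)$ and therefore preserves $H^1(C,{\cal E})_0$. Consequently $H^1(C,{\cal E})_0$ is $\Aut(E)$-stable, and part (1) follows at once by restricting the bijection of Theorem \ref{ClassTh-intro} to its non-degenerate members, which correspond exactly to the invariant subset $H^1(C,{\cal E})_0$.

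The crux is the following rigidity lemma. Because every automorphism of $E$ preserves ${\cal D}_E$, the subgroup $\Aut(E)_0$ is the kernel of the restriction homomorphism $\Aut(E)\to\Aut({\cal D}_E)$. As $C$ is compact and connected, $H^0(C,{\cal O}_C)=\C$, so $\Aut({\cal D}_E)=\C^*$ and each $\phi\in\Aut(E)$ acts on ${\cal D}_E$ by a single global scalar $\lambda\in\C^*$; in particular $\phi\circ\varepsilon_0=\lambda\,\varepsilon_0$. Hence $\phi$ fixes the framing, $\phi\circ\varepsilon_0=\varepsilon_0$, if and only if $\lambda=1$, i.e. if and only if $\phi\in\Aut(E)_0$. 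This is precisely what upgrades a pointwise normalisation at $x_0$ to membership in the global subgroup $\Aut(E)_0$, and it is the only step that really uses compactness.

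With this in hand, part (2) is transport of structure through Theorem \ref{ClassTh-intro}, whose proof also yields the naturality $h_{A'}=H^1(L(g))(h_A)$ for an affine isomorphism $g:A\to A'$. A compatible $f$ (with $f\circ\varepsilon=\varepsilon_0$) exists because any isomorphism $f_0:L(A)\to E$ satisfies $f_0\circ\varepsilon=\mu\,\varepsilon_0$ and one rescales $f\edf\mu^{-1}f_0$. The class $[H^1(f)(h_A)]\in H^1(C,{\cal E})/\Aut(E)_0$ is then well defined: any two compatible choices differ by a $\phi\in\Aut(E)$ fixing $\varepsilon_0$, and a framed isomorphism $(A,\varepsilon)\cong(A',\varepsilon')$ with linear part $\psi$ produces $\phi\edf f'\psi f^{-1}$ fixing $\varepsilon_0$; in both cases the rigidity lemma gives $\phi\in\Aut(E)_0$, so the classes coincide. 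Injectivity reverses this computation: a $\phi\in\Aut(E)_0$ matching the two classes yields $\psi\edf (f')^{-1}\phi f$ with $H^1(\psi)(h_A)=h_{A'}$, hence by Theorem \ref{ClassTh-intro} an affine isomorphism $g$ with $L(g)=\psi$, and $\psi\circ\varepsilon=\varepsilon'$ since $\phi$ fixes $\varepsilon_0$, so $g$ is framed. Surjectivity is obtained by realising a representative $c$ as $H^1(f_0)(h_A)$ for some $A$ and equipping $A$ with the framing $\varepsilon\edf f_0^{-1}\circ\varepsilon_0$.

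Finally, part (2b) is the restriction of the bijection of part (2) to the non-degenerate locus, which on the cohomology side is the $\Aut(E)_0$-stable open set $H^1(C,{\cal E})_0$; this gives $H^1(C,{\cal E})_0/\Aut(E)_0$. The main obstacle is thus concentrated in the rigidity lemma of the second paragraph; the remainder is careful bookkeeping over the already established Theorem \ref{ClassTh-intro}, and the only subtlety to watch is that the naturality $h_{A'}=H^1(L(g))(h_A)$ is genuinely available from its proof.
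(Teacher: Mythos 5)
Your proposal is correct and takes essentially the same approach as the paper: part (1) is obtained by restricting Theorem \ref{ClassTh-intro} to the $\Aut(E)$-stable open subset $H^1(C,{\cal E})_0$ (using that automorphisms and isomorphisms preserve the maximal destabilising subbundle, as in Remark \ref{Invariance-of-DE}), and part (2) transports classes via a compatible $f$ and deduces injectivity from Lemma \ref{nice-lemma} via $b=g^{-1}\circ\psi\circ f$, exactly as the paper does. Your rigidity lemma simply makes explicit the step the paper dismisses as ``obviously well defined'' --- namely that an automorphism of $E$ acts on $D_E$ by a single global scalar (since $C$ is compact and connected), so fixing $\varepsilon_0$ at the one point $x_0$ already forces membership in $\Aut(E)_0$ --- which is a correct and welcome clarification rather than a different route.
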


In this article, by the moduli space of  framed  (non-degenerate)  affine holomorphic bundles with fixed linearisation type on $C$ we will mean the topological space obtained by endowing the corresponding moduli set with the topology induced by the identifications given by Theorem \ref{ClassTh2-intro}.

Note that in general the groups $\Aut(E)$, $\Aut(E)_0$ are affine algebraic groups (see Proposition \ref{Aut(E)AffAlg}), but not necessarily reductive. Therefore the classical GIT methods do not apply.

Section \ref{P1section} is dedicated to the classification of framed, non-degenerate affine rank 2 bundles  over $\P^1_\C$ whose linearisation, viewed as locally free sheaf,  is isomorphic to   $ {\cal O}_{\P^1_\C}(n_1)\oplus {\cal O}_{\P^1_\C}(n_2)$ where $-2\geq n_1>n_2$.
 
Making use of Theorem \ref{ClassTh2-intro}, we will show that the moduli space of isomorphism classes of such bundles, regarded as a topological space, can be identified with the topological cokernel (see section \ref{coker-appendix}) $\Coker(H)$ of a morphism 
$$H:L\to L'$$
 of linear spaces over the projective space $ \P(\mathbb{C}[X_0,X_1]_{-2-n_2}^*)$. $\Coker(H)$ can be viewed as the underlying topological space of the relative quotient stack (over $\P(\mathbb{C}[X_0,X_1]_{-2-n_2}^*)$) of the linear space $L'$ by $L$, where the latter acts on $L'$ via $H$. In a future article we will compare the moduli stack of the framed, non-degenerate affine bundles that we consider with the relative quotient stack of $L'$ by $L$. 
 Since the topological cokernel of a morphism of linear space plays an important role in this article, is of general interest, and (to my knowledge) is not treated in the literature, I have dedicated to it a section in the appendix.

  Although in general non-Hausdorff, our moduli space $\Coker(H)$ fibres over the projective space $\P(\mathbb{C}[X_0,X_1]_{-2-n_2}^*)$ with vector spaces as fibres. Our next goal will be understanding geometrically the stratification of   $\P(\mathbb{C}[X_0,X_1]_{-2-n_2}^*)$  defined by the level sets of the fibre dimension map $\xi\mapsto \dim(\Coker(H)_\xi)$. 
  
  Writing $\xi=[\Phi]$ for a non-trivial linear form $\Phi\in \mathbb{C}[X_0,X_1]_{-2-n_2}^*$, we will see that the fibre $\Coker(H)_\xi$ is naturally identified with the cokernel of the linear map
\begin{equation}\label{map-to-study}
 \mathbb{C}[X_0,X_1]_{n_1-n_2}\ni Q\mapsto \trp{m}_Q(\Phi)\in \mathbb{C}[X_0,X_1]_{-2-n_1}^*,	\end{equation}
 where, for a polynomial $Q\in \mathbb{C}[X_0,X_1]_{n_1-n_2}$, we have denoted by
 $$m_Q: \mathbb{C}[X_0,X_1]_{-2-n_1}\to \mathbb{C}[X_0,X_1]_{-2-n_2}$$
the multiplication by $Q$, and by $\trp{m}_Q$   its  transpose (algebraic adjoint).  

 The main idea for computing the dimensions of the cokernels of the family of linear maps (\ref{map-to-study}): for $l\in \N$ identify the dual space  $\mathbb{C}[X_0,X_1]_l^*$ with   $\mathbb{C}[X_0,X_1]_l$ via the isomorphism $D_l:\mathbb{C}[X_0,X_1]_l\textmap{\simeq}\mathbb{C}[X_0,X_1]_l^*$ which maps a polynomial  $P\in \mathbb{C}[X_0,X_1]_l$ to the differential operator $P(\delta_0,\delta_1)$, where $\delta_i\edf\frac{\partial}{\partial X_i}$. 
 
 Using these identifications we will prove (see section \ref{fibres-apolar-section}) the following duality theorem, which makes the connection between our classification problem for rank 2 affine bundles over $\P^1_\C$ and the theory of apolar ideals:

\begin{thr}{(Duality Theorem)} \label{duality-th-intro}
 Via the isomorphisms $D_l$, $D_{l-d}$ we have  
$$
\trp{m}_Q(P(\delta_0, \delta_1)) = Q\left(\frac{\partial}{\partial \delta_0}, \frac{\partial}{\partial \delta_1}\right)(P(\delta_0, \delta_1)).
$$
for any $P(\delta_0, \delta_1)\in \C[\delta_0,\delta_1]_l$, $Q\in \C[X_0,X_1]_d$.

In other words, putting $\Phi\edf D_l(P(\delta_0,\delta_1))\in \C[X_0,X_1]_l^*$, the differential operator associated with $\trp{m}_Q(\Phi)$ via $D_{l-d}^{-1}$ is obtained by applying the differential operator $Q\left(\frac{\partial}{\partial \delta_0}, \frac{\partial}{\partial \delta_1}\right)$ to   $P(\delta_0, \delta_1)$, {\it regarded as polynomial in the abstract variables $\delta_0$, $\delta_1$}.
\end{thr}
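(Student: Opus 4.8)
The plan is to rephrase both sides of the asserted identity in terms of the apolarity pairing and thereby reduce the theorem to a single adjointness formula, which is then an elementary computation. For $k\in\N$ introduce the bilinear pairing
\[
\langle\,\cdot\,,\,\cdot\,\rangle\colon \C[X_0,X_1]_k\times\C[X_0,X_1]_k\to\C,\qquad
\langle F,G\rangle\edf F(\delta_0,\delta_1)(G),
\]
where $F(\delta_0,\delta_1)$ is the constant-coefficient differential operator obtained from the form $F$ by the substitution $X_i\mapsto\delta_i=\partial/\partial X_i$. With this notation the isomorphism $D_k$ is exactly $D_k(F)=\langle F,\,\cdot\,\rangle$. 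On the monomial basis one has $\langle X_0^aX_1^b,\,X_0^cX_1^e\rangle=a!\,b!$ when $(a,b)=(c,e)$ and $0$ otherwise; in particular the pairing is symmetric and diagonal, a fact I will use at the very end.

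First I would unwind the left-hand side. By the definition of the transpose, for every $R\in\C[X_0,X_1]_{l-d}$ one has
\[
\trp{m}_Q\bigl(D_l(P)\bigr)(R)=D_l(P)\bigl(m_Q(R)\bigr)=D_l(P)(Q\cdot R)=\langle P,\,Q\cdot R\rangle,
\]
where $P\in\C[X_0,X_1]_l$ denotes the form underlying the operator $P(\delta_0,\delta_1)$. On the other hand, writing $Q\star P\edf Q\bigl(\partial/\partial\delta_0,\partial/\partial\delta_1\bigr)\bigl(P(\delta_0,\delta_1)\bigr)\in\C[X_0,X_1]_{l-d}$ for the polynomial obtained by letting the differential operator $Q(\partial/\partial\delta_0,\partial/\partial\delta_1)$ act on $P$ \emph{regarded as a form in the abstract variables} $\delta_0,\delta_1$, the right-hand side of the theorem is, by the definition of $D_{l-d}$, the functional $D_{l-d}(Q\star P)=\langle Q\star P,\,\cdot\,\rangle$. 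Hence the entire statement is equivalent to the single adjointness identity
\[
\langle P,\,Q\cdot R\rangle=\langle Q\star P,\,R\rangle\qquad\text{for all }R\in\C[X_0,X_1]_{l-d}. \qquad (\ast)
\]

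Then I would prove $(\ast)$. By bilinearity it suffices to treat monomials $P=X_0^{p_0}X_1^{p_1}$, $Q=X_0^{q_0}X_1^{q_1}$, $R=X_0^{r_0}X_1^{r_1}$ of the respective degrees $l$, $d$, $l-d$. For the left-hand side, $\langle P,QR\rangle=\langle X_0^{p_0}X_1^{p_1},X_0^{q_0+r_0}X_1^{q_1+r_1}\rangle$ equals $p_0!\,p_1!$ when $p_0=q_0+r_0$ and $p_1=q_1+r_1$, and vanishes otherwise. For the right-hand side, $Q\star P=\tfrac{p_0!}{(p_0-q_0)!}\tfrac{p_1!}{(p_1-q_1)!}\,X_0^{p_0-q_0}X_1^{p_1-q_1}$ (zero unless $p_0\ge q_0$ and $p_1\ge q_1$), so $\langle Q\star P,R\rangle$ is nonzero exactly when $p_0-q_0=r_0$ and $p_1-q_1=r_1$, in which case it equals $\tfrac{p_0!}{(p_0-q_0)!}\tfrac{p_1!}{(p_1-q_1)!}\cdot(p_0-q_0)!\,(p_1-q_1)!=p_0!\,p_1!$. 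The two sides therefore agree term by term, and the nonvanishing conditions coincide, so $(\ast)$ holds and the theorem follows. A coordinate-free variant avoids even this bookkeeping: extend the pairing to all of $\C[X_0,X_1]$ by $\langle X^\alpha,X^\beta\rangle=\alpha!$ for $\alpha=\beta$ and $0$ otherwise; then $\langle e^{s\cdot X},e^{t\cdot X}\rangle=e^{s\cdot t}$, and since $e^{u\cdot\partial_\delta}e^{s\cdot\delta}=e^{s\cdot u}e^{s\cdot\delta}$, identity $(\ast)$ drops out by comparing coefficients of $s^\alpha u^\beta t^\gamma$.

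The only genuine obstacle here is notational rather than mathematical: one must keep scrupulously separate the two incarnations of $\C[X_0,X_1]$ — as the space of binary forms, and as the space of constant-coefficient differential operators via $X_i\mapsto\delta_i$ — together with the \emph{further} differentiation with respect to the abstract symbols $\delta_i$ hidden in $Q(\partial/\partial\delta_0,\partial/\partial\delta_1)$. Once these roles are pinned down and the transpose is written out explicitly, the content of the Duality Theorem reduces to the elementary fact, recorded in $(\ast)$, that multiplication by $Q$ and the differential operator $Q(\partial/\partial\delta_0,\partial/\partial\delta_1)$ are mutually adjoint with respect to the apolarity pairing.
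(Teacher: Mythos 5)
Your proposal is correct and takes essentially the same route as the paper: the paper likewise unwinds $\trp{m}_Q$ against test monomials and verifies the identity on monomials $P=\delta_0^{s_0}\delta_1^{s_1}$, $Q=X_0^{k_0}X_1^{k_1}$, testing against $X_0^{a_0}X_1^{a_1}$, with exactly your factorial computation and matching nonvanishing conditions. Your repackaging via the apolarity pairing and the adjointness identity $(\ast)$, together with the exponential generating-function aside, is a presentational refinement rather than a genuinely different argument.
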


Put $l\edf -2-n_2$, $d\edf n_1-n-2$ and let $P\in \C[X_0,X_1]_l$ be the polynomial associated with $\Phi$ via $D_l^{-1}$.  The duality theorem  shows that the vector space $\coker(Q\mapsto \trp{m}_Q(\Phi))$, in which we are interested, can be identified with the cokernel
$$
C^d_P\edf \coker \big (\C[X_0,X_1]_d\ni Q\mapsto Q(\frac{\partial}{\partial X_0},\frac{\partial}{\partial X_1})(P)\big).
$$

At this stage we will make use essentially of the classical theory of binary forms and of the fundamental concepts  of this theory: apolar ideals  and cactus rank. The needed definitions and results are explained in the appendix (see section \ref{apolar-section}).
This theory belongs to  classical algebra, so we decided to present it in  the appendix.

The kernel $\ker\big(\C[X_0,X_1]_d\ni Q\mapsto Q(\frac{\partial}{\partial X_0},\frac{\partial}{\partial X_1})(P)\big)$ is just  the degree $d$ homogeneous component $\Ann(P)_d$ of the apolar ideal $\Ann(P)$ of $P$ (see Appendix \ref{apolar-section}). Its dimension depends on $d$ and the cactus rank $\crk(P)$ of $P$ (see Definition \ref{CactusRkDef}, Proposition \ref{d1=cactus}, Remark \ref{dim(Ann(P))Rem}).  We will prove:

\begin{pr}  The dimension $s:=\dim(C^d_P)$ depends only on $d$ and the cactus rank $r=\crk(P)$:

\begin{enumerate}
\item For $0\leq d\leq [\frac{l}{2}]$, the function $s=s(r)$ is given by:

$$s(r)=\left\{
\begin{array}{lcl}
l-d-(r-1)&\rm if & 1\leq r\leq d+1,\\
l-2d&\rm if &d+1\leq r\leq [\frac{l}{2}]+1.
\end{array}\right.$$ 

\item For $[\frac{l}{2}]\leq d\leq l$, the function $s=s(r)$ is given by: 

 $$s(r)=\left\{
\begin{array}{lcl}
l-d-(r-1)&\rm if & 1\leq r\leq l-d+1,\\
0&\rm if &l-d+1\leq r\leq [\frac{l}{2}]+1.
\end{array}\right.$$ 
	
%
\end{enumerate}
 
 \end{pr}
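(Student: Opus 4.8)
The plan is to collapse the statement onto a single dimension count — that of the degree‑$d$ component of the apolar ideal — by rank--nullity, and then to substitute the known value of $\dim\Ann(P)_d$ and simplify region by region. There is no deep difficulty here; the work is entirely in the bookkeeping of signs and the interaction of the case boundaries with the a priori range of the cactus rank.

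\emph{Step 1: reduction via rank--nullity.} Write $R=\C[X_0,X_1]$ and let
$$\mu_d: R_d \longrightarrow R_{l-d},\qquad Q\longmapsto Q\Big(\tfrac{\partial}{\partial X_0},\tfrac{\partial}{\partial X_1}\Big)(P)$$
be the linear map whose cokernel is $C^d_P$. As recalled before the statement, $\ker\mu_d=\Ann(P)_d$. Since $\dim R_d=d+1$ and $\dim R_{l-d}=l-d+1$, rank--nullity gives $\dim\im\mu_d=(d+1)-\dim\Ann(P)_d$, whence
$$s=\dim C^d_P=(l-d+1)-\dim\im\mu_d=l-2d+\dim\Ann(P)_d.$$
Thus the entire statement is equivalent to computing $\dim\Ann(P)_d$ as a function of $d$ and $r=\crk(P)$, and showing it equals $s(r)-(l-2d)$ in each regime.

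\emph{Step 2: the apolar ideal of a binary form.} Here I would invoke the structure result recalled in the appendix: for a binary form $P$ of degree $l$ the apolar ideal $\Ann(P)$ is a complete intersection generated by two forms of degrees $\alpha\le\beta$ with $\alpha+\beta=l+2$, and $\alpha=\crk(P)=r$. The Koszul resolution of the ideal,
$$0\to R(-l-2)\to R(-r)\oplus R(-(l+2-r))\to \Ann(P)\to 0,$$
yields, writing $\dim R_m=\max(m+1,0)$,
$$\dim\Ann(P)_d=\max(d-r+1,0)+\max(d+r-l-1,0)-\max(d-l-1,0).$$
Since $0\le d\le l$ the last term always vanishes, so $\dim\Ann(P)_d$ is controlled by the signs of $d-r+1$ and $d+r-l-1$ alone. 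This is exactly the content of Remark \ref{dim(Ann(P))Rem}, so this step is entirely citation.

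\emph{Step 3: case analysis and the main subtlety.} I then substitute into $s=l-2d+\dim\Ann(P)_d$, using throughout the a priori bound $1\le r\le[\frac{l}{2}]+1$. In case (1), $0\le d\le[\frac{l}{2}]$, one has $d+r\le 2[\frac{l}{2}]+1\le l+1$, so the middle $\max$ vanishes; then $d-r+1\ge0$ iff $r\le d+1$, giving $s=l-d-(r-1)$ for $1\le r\le d+1$ and $s=l-2d$ for $d+1\le r\le[\frac{l}{2}]+1$. In case (2), $[\frac{l}{2}]\le d\le l$, the inequality $r\le[\frac{l}{2}]+1\le d+1$ forces $d-r+1\ge0$, so the first $\max$ is always $d-r+1$; and $d+r-l-1\ge0$ iff $r\ge l-d+1$, giving $s=l-d-(r-1)$ for $1\le r\le l-d+1$ and $\dim\Ann(P)_d=(d-r+1)+(d+r-l-1)=2d-l$, i.e.\ $s=0$, for $l-d+1\le r\le[\frac{l}{2}]+1$. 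The one place that requires genuine care — and the only real obstacle — is reconciling the claimed $r$‑intervals with the true range $r\le[\frac{l}{2}]+1$ at the boundary $d=[\frac{l}{2}]$, especially when $l$ is odd, where $l-d+1$ may exceed $[\frac{l}{2}]+1$ so that the second branch of case (2) is vacuous; one verifies that the two tables agree on the overlap $d=[\frac{l}{2}]$, which confirms that $s(r)$ is well defined and completes the proof.
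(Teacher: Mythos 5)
Your proposal is correct and takes essentially the same route as the paper: the paper likewise reduces to $s=l-2d+\dim(\Ann(P)_d)$, computes $\dim(\Ann(P)_d)$ from Sylvester's structure theorem (Remark \ref{dim(Ann(P))Rem}, which your Koszul-resolution count reproduces, the paper proving the same formula by an elementary surjectivity/injectivity argument), identifies $d_1(P)=\crk(P)$ via Proposition \ref{d1=cactus}, and then performs the same regime-by-regime bookkeeping. Your boundary check at $d=[\frac{l}{2}]$ (including the vacuous second branch of case (2) for odd $l$) is accurate and matches the paper's tables.
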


It follows that the stratification of $\P(\mathbb{C}[X_0,X_1]_{l}^*)\simeq \P(\mathbb{C}[X_0,X_1]_{l})$  defined by the level sets of the fibre dimension map $\xi\mapsto \dim(\Coker(H)_\xi)$ of our family of vector spaces is determined by $d$ and the cactus rank stratification of $\P(\mathbb{C}[X_0,X_1]_{l})$. Corollary \ref{stratif-comparison} gives explicit formulae for the level sets of the fibre dimension map in terms of $d$ and the level sets of the cactus rank. 

In section \ref{cactus-rk-strat-small-l} of the appendix we will describe explicitly the cactus rank stratification of the projective space $\P(\mathbb{C}[X_0,X_1]_{l})$ for $l\leq 4$. The case $l=4$ shows already that, in general,  $\dim(C^d_P)$ is not determined  only by $d$ 	and  the multiplicities of the roots of $P$, but depends effectively of the GIT invariants of $P$.

\section{The classification theorem for affine bundles}\label{ClassTh-section}

 Recall (see for instance \cite[Proposition 2 p. 184]{At}, \cite[Proposition 6.3 p. 234]{Ha}) that, in general, for any coherent sheaf ${\cal F}$ on $X$, isomorphism classes of extensions
 $$
 0\to {\cal F}\textmap{j} {\cal F}'\to {\cal O}_X\to 0 \eqno{(e)}
 $$
  of ${\cal O}_X$ by ${\cal F}$ correspond bijectively to elements of $\Ext^1({\cal O}_X,{\cal F})=H^1(X,{\cal F})$ via the extension class (invariant) map
  $$
  (e)\mapsto h_e\coloneq \delta(1),
  $$  
where $\delta:H^0(X,{\cal O}_X)\to H^1(X,{\cal F})$ is the connecting morphism associated with the exact sequence $(e)$.  In particular we define:
\begin{dt}\label{DefhA}
Let $A$ be an affine bundle over a complex manifold $X$ and $(e_A)$ the associated short exact sequence (see \ref{prelim}). We define the cohomology invariant of $A$ by  $h_A\edf h_{e_A}\in H^1(X,{\cal L}(A))$.	
\end{dt}

Using the functoriality of the connecting morphism with respect to morphisms of short exact sequences, it follows easily:

\begin{re}\label{Functor-h}
Consider an exact sequence of the form $(e)$, a sheaf isomorphism  $f:{\cal G}\to {\cal F}$ and the exact sequence
$$
 0\to {\cal G}\textmap{j\circ f} {\cal F}'\to {\cal O}_X\to 0 \eqno{(f^*(e))}
 $$
 
 Then
 $$
 h_{f^*(e)}=H^1(f)^{-1}(h_e).
 $$
\end{re}

We will apply the general results explained above to the extensions  associated  with affine bundles  (see section  \ref{prelim}).


\begin{lm}\label{nice-lemma}
Let $A$, $B$ be affine bundles on $X$ and $g:L(A)\to L(B)$ be a vector bundle isomorphism such that $H^1(g)(h_{A})=	h_{B}$. There exists an affine bundle isomorphism $\ \gamma:A\to B$ such that $L(\gamma)=g$.
\end{lm}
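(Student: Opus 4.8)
The plan is to read the hypothesis $H^1(g)(h_A)=h_B$ through the dictionary recalled at the start of this section: the invariant map is a bijection between isomorphism classes of extensions of ${\cal O}_X$ and $\Ext^1({\cal O}_X,-)=H^1(X,-)$. The whole point is to convert the purely cohomological equality into a \emph{concrete} isomorphism of the associated augmented bundles that restricts to $g$ on the linearisations; once such an isomorphism is available, the equivalence of groupoids of Proposition \ref{AffBdls-AugmBdls} delivers the desired affine isomorphism $\gamma$ for free.

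First I would transport the extension $(e_A)$ so that its kernel becomes ${\cal L}(B)$. Write $\iota_A$, $\iota_B$ for the inclusions in $(e_A)$ and in the analogous sequence $(e_B): 0\to{\cal L}(B)\xrightarrow{\iota_B}\widetilde{{\cal L}}(B)\xrightarrow{\varphi_B}{\cal O}_X\to 0$. Applying Remark \ref{Functor-h} to $(e_A)$ with the sheaf isomorphism $f\edf g^{-1}:{\cal L}(B)\to{\cal L}(A)$ produces the extension
$$
0\to {\cal L}(B)\xrightarrow{\ \iota_A\circ g^{-1}\ }\widetilde{{\cal L}}(A)\xrightarrow{\ \varphi_A\ }{\cal O}_X\to 0 ,
$$
whose invariant, by that remark, equals $H^1(g^{-1})^{-1}(h_A)=H^1(g)(h_A)=h_B$, using that $H^1$ is a covariant functor in the coefficient sheaf so that $H^1(g^{-1})^{-1}=H^1(g)$.

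Next, both this transported extension and $(e_B)$ are extensions of ${\cal O}_X$ by the \emph{same} sheaf ${\cal L}(B)$ and they carry the \emph{same} invariant $h_B$. By the extension–invariant bijection they are therefore equivalent, i.e. there is a sheaf isomorphism $\tilde g:\widetilde{{\cal L}}(A)\to\widetilde{{\cal L}}(B)$ fitting into the commutative ladder whose left and right vertical maps are $\id_{{\cal L}(B)}$ and $\id_{{\cal O}_X}$. Unwinding the two commuting squares yields $\tilde g\circ\iota_A=\iota_B\circ g$ and $\varphi_B\circ\tilde g=\varphi_A$: the first identity says that, under the canonical identifications ${\cal L}(A)=\ker(\varphi_A)$ and ${\cal L}(B)=\ker(\varphi_B)$ of Proposition \ref{AffBdls-AugmBdls}, the isomorphism $\tilde g$ restricts to $g$; the second says $\tilde g$ is compatible with the augmentations, so $\tilde g$ is an isomorphism in the category $E_X^r$ of augmented bundles. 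Passing from locally free sheaves to vector bundles and feeding $\tilde g$ into the equivalence of Proposition \ref{AffBdls-AugmBdls}, I obtain an affine bundle isomorphism $\gamma:A\to B$; since the linearisation functor corresponds under this equivalence to restriction to $\ker(\varphi)$, and $\tilde g$ restricts exactly to $g$ there, we get $L(\gamma)=g$.

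I expect the only delicate point to be bookkeeping rather than substance. The one thing that genuinely needs checking is that the isomorphism $\tilde g$ furnished abstractly by ``equal invariants $\Rightarrow$ equivalent extensions'' really commutes with the augmentation maps and restricts to $g$ itself (and not merely to something conjugate to it); but this is precisely the content of the commutative ladder built into the notion of equivalence of extensions, so no real obstacle arises. The passage between holomorphic vector-bundle morphisms and locally free sheaf morphisms, and between augmented-bundle isomorphisms and affine-bundle isomorphisms, is handled wholesale by Proposition \ref{AffBdls-AugmBdls}, which is why the statement reduces cleanly to the extension-theoretic computation above.
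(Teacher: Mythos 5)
Your proof is correct and follows essentially the same route as the paper: both use Remark \ref{Functor-h} to transport one of the extensions $(e_A)$, $(e_B)$ so that the two have the same kernel, invoke the extension--invariant bijection to obtain the middle isomorphism $\tilde g$ in a commutative ladder with identity outer maps, and then pass through the affine/augmented-bundle equivalence to produce $\gamma$ with $L(\gamma)=g$. The only (immaterial) difference is that you transport $(e_A)$ via $g^{-1}$ to land on kernel ${\cal L}(B)$, whereas the paper transports $(e_B)$ via $g$ to land on kernel ${\cal L}(A)$.
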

\begin{proof}

 By Remark \ref{Functor-h} we know that the extension invariant of
 $$
 0\to {\cal L}(A)\textmap{\iota_B\circ g} \tilde {\cal L}(B)\textmap{\varphi_B} {\cal O}_X\to 0 \eqno{(g^*(e_B))}
 $$	
 is  $H^1(g^{-1})(h_{B})=h_{A}$. Therefore this extension is isomorphic to $(e_A)$, so there exists a commutative diagram
 \begin{center}

\begin{tikzcd}[column sep=1.2cm]
0 \arrow[r, ""] & {\cal L}(A) \arrow[r, "\iota_A ", hook] \arrow[d, "\id"] & \tilde {\cal L}(A) \arrow[d, "\tilde\gamma ", "\simeq"{anchor=south, rotate=90}] \arrow[r, "\varphi_A"] & \Oc_X  \arrow[d, "\id"] \arrow[r, ""]  & 0 \phantom{.} \\
0 \arrow[r, ""] & {\cal L}(A)  \arrow[r, "\iota_B\circ g "] & \tilde {\cal L}(B)  \arrow[r, "\varphi_B"] & \Oc_X \arrow[r, ""] &  0.
\end{tikzcd}

\end{center}
  Using Remark \ref{A-ag} note now that $\tilde\gamma$ induces an isomorphism of affine bundles
$$
\gamma:A=\varphi_A^{-1}(1)\textmap{\simeq}\varphi_B^{-1}(1)=B.
$$ 
 The same diagram shows that the isomorphism $\ker(\varphi_A)={\cal L}(A)\to \ker(\varphi_B)={\cal L}(B)$  induced by $\tilde\gamma$ coincides with $g$.
 
\end{proof}

We can prove now Theorem \ref{ClassTh-intro}  stated in the introduction:


\begin{proof}   (of Theorem \ref{ClassTh-intro}):


For the surjectivity, let  $\beta \in H^1(X,{\cal E})$. By Atiyah's classification theorem mentioned above, there exists an exact sequence 
$$
0\to {\cal E}\textmap{j} {\cal E}'\textmap{\varphi} {\cal O}_X\to 0 \eqno{(e)}
$$
with $h_e=\beta$. Let $E'$ be the vector bundle associated with ${\cal E}'$ and use the same symbol for the  bundle  epimorphism $E'\to X\times\C$ associated with $\varphi$. Put $\ag\edf (E',\varphi)$. By Remark \ref{A-ag} it follows that $A_\ag=\varphi^{-1}(X\times\{1\})$ is an affine bundle whose linearization $L(A_\ag)$ is identified with $\ker(\varphi)$. But $j$ induces an isomorphism     $E\textmap{\simeq}\ker(\varphi)$.

For the injectivity, let $A$, $B$ be two affine bundles of rank $r$ on $X$ with $L(A)\simeq E$, $L(B)\simeq E$ and fix vector bundle isomorphisms $f:E\textmap{\simeq} L(A)$, $g:E\textmap{\simeq} L(B)$. We have to prove that, assuming that $H^1(f^{-1})(h_{A})$,  $H^1(g^{-1})(h_{B})$ belong to the same  $\Aut(E)$-orbit in $H^1(X,{\cal E})$, then $A\simeq B$.

Choose a vector bundle automorphism $\psi\in \Aut(E)$ such that 
$$H^1(\psi^{-1})(H^1(f^{-1})(h_{A}))=H^1(g^{-1})(h_{B}).
$$
By Lemma \ref{nice-lemma}, it follows that there exists an affine bundle isomorphism $\gamma: A \to B$ such that $L(\gamma)=g\circ \psi^{-1}\circ f^{-1}$. Then $A$ is isomorphic to $B$.
\end{proof}

By Cartan's finiteness theorem we have:

\begin{re} If $X$ is a  compact complex manifold, then the space $H^1(X,{\cal E})$ intervening in Theorem \ref{ClassTh-intro} is a complex space of finite dimension. 
\end{re}

Moreover one has

\begin{pr}\label{Aut(E)AffAlg}
Let $X$ be a compact connected complex manifold, $E$ a holomorphic vector bundle on $X$ and ${\cal E}$ the associated locally free sheaf. Then $\Aut(E)=\Aut({\cal E})$ is an affine algebraic group.
\end{pr}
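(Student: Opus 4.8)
The plan is to realise $\Aut({\cal E})$ as the group of units of a finite-dimensional associative $\C$-algebra, and then to invoke the classical fact that the unit group of such an algebra is a linear (hence affine) algebraic group, realised as a Zariski-closed subgroup of some $\GL(N,\C)$.

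First I would set $R\edf\End({\cal E})=H^0\big(X,\mathcal{H}om({\cal E},{\cal E})\big)$. Since ${\cal E}$ is locally free of finite rank, the sheaf $\mathcal{H}om({\cal E},{\cal E})$ is coherent, so Cartan's finiteness theorem (already invoked above) shows that $R$ is a finite-dimensional $\C$-vector space; here only compactness of $X$ is used, connectedness playing no essential role. Composition of endomorphisms makes $R$ an associative unital $\C$-algebra with unit $\id_{\cal E}$. The decisive identification is $\Aut({\cal E})=R^\times$, the group of units: an element $\varphi\in R$ is an automorphism of the locally free sheaf exactly when it admits a two-sided inverse in $R$, the inverse of a sheaf isomorphism being again ${\cal O}_X$-linear and hence an element of $R$.

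Next I would equip $R^\times$ with its affine algebraic group structure. Let $N\edf\dim_\C R$ and consider the left regular representation $L:R\to\End_\C(R)$, $L_a(x)\edf ax$, an injective homomorphism of $\C$-algebras. Its image $L(R)$ is a linear subspace of the finite-dimensional algebra $\End_\C(R)$, hence Zariski closed. In a finite-dimensional algebra a one-sided inverse is automatically two-sided, so $a\in R^\times$ if and only if $L_a$ lies in the group $\GL(R)$ of invertible elements of $\End_\C(R)$; therefore $L$ restricts to a group isomorphism of $R^\times$ onto the intersection $L(R)\cap\GL(R)$. After choosing a $\C$-basis of $R$ this realises $R^\times$ as a Zariski-closed subgroup of $\GL(R)\cong\GL(N,\C)$, and a closed subgroup of $\GL(N,\C)$ is an affine algebraic group. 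Hence $\Aut({\cal E})\cong R^\times$ is an affine algebraic group.

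The only step requiring a little care — and the closest thing to an obstacle in an otherwise formal argument — is the passage from one-sided to two-sided invertibility, which is what lets membership in $R^\times$ be detected by the single condition $L_a\in\GL(R)$, equivalently $\det(L_a)\neq 0$: if $ab=\id_{\cal E}$ then $aR$ is a right ideal containing the unit, hence all of $R$, so $L_a$ is surjective and, by finite-dimensionality, bijective, which forces $ba=\id_{\cal E}$ as well. This also explains why $R^\times$ coincides with the clean intersection $L(R)\cap\GL(R)$ rather than a more complicated locus; everything beyond this is the routine verification that the group operations restrict from $\GL(N,\C)$ to the closed subgroup $L(R^\times)$.
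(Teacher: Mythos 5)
Your proposal is correct, and it takes a genuinely different route from the paper's. You prove the general fact that the unit group $R^\times$ of the finite-dimensional associative algebra $R=\End({\cal E})$ (finite-dimensional by Cartan's finiteness theorem) is an affine algebraic group, via the left regular representation $L:R\to\End_\C(R)$: the image $L(R)$ is a linear, hence Zariski-closed, subspace, and the one-sided-implies-two-sided invertibility argument — which you correctly flag as the only delicate point — shows $L(R^\times)=L(R)\cap\GL(R)$, a closed subgroup of some $\GL(N,\C)$. The paper argues instead directly inside the vector space $H^0(X,{\cal E}nd(E))$: fixing $x_0\in X$, compactness \emph{and connectedness} force $\det(f)\in{\cal O}(X)$ to be constant, so $\Aut(E)$ is the principal open subset $\{\delta_{x_0}\neq 0\}$, where $\delta_{x_0}(f)=\det(f)(x_0)$ is a homogeneous polynomial of degree $r$; composition is regular because it is bilinear, and regularity of inversion is checked by the explicit formula $f^{-1}=\trp{(\wedge^{r-1}f)}\,\det(f)^{-1}$ coming from ${\cal E}\simeq(\wedge^{r-1}{\cal E})^\vee\otimes\det({\cal E})$. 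What each approach buys: yours is more intrinsic and slightly more general — as you note, connectedness is not needed, and you avoid both the pointwise determinant and the exterior-power formula, since invertibility in $R$ is detected by the single polynomial condition $\det(L_a)\neq 0$; the paper's argument, on the other hand, exhibits $\Aut(E)$ concretely as a principal affine open of the endomorphism space, which is exactly the presentation exploited later in the explicit matrix description (\ref{Aut(E)}) of $\Aut(\bigoplus_i{\cal O}_{\P^1_\C}(n_i)^{\oplus s_i})$. The two algebraic group structures agree, since $L$ is a linear isomorphism onto its image carrying the paper's principal open onto your closed subgroup of $\GL(R)$.
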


\begin{proof} We will prove that $\Aut({\cal E})$ is an affine Zariski open (in the algebraic sense) subset of the finitely dimensional complex vector space $H^0(X,{\cal E}nd(E))$, and that the composition  and the inversion maps are regular.

 Let  ${\cal E}nd(E)$ be the sheaf of  (local) holomorphic sections of the bundle $\End(E)$.  For any $f \in H^0(X,{\cal E}nd(E))$, the determinant map $\det(f)\in {\cal O}(X)$ is constant, because  $X$ is compact  and connected. Fix $x_0\in X$. A section $f\in H^0(X,{\cal E}nd(E))$ is an automorphism of $E$ if and only if $\det(f)(x_0)\ne 0$. Therefore
$$
\Aut(E)=H^0(X,{\cal E}nd(E))\setminus \delta_{x_0}^{-1}(0),
$$
where $\delta_{x_0}: H^0(X,{\cal E}nd(E))\to \C$ is  defined by
$$
\delta_{x_0}(f)=\det(f)(x_0).
$$
In the commutative diagram 
 
$$\begin{tikzcd}
H^0(X,{\cal E}nd(E)) \arrow[rr, "\ev_{x_0}"] \ar[rd, dotted, "\delta_{x_0}", swap] && \End(E_{x_0}) \ar[ld, "\det"] \\ & \C  
\end{tikzcd}$$
the evaluation map $\ev_{x_0}$ at $x_0$ is linear, which proves that $\delta_{x_0}(f)$
 is a  (homogeneous) polynomial map of degree $r$, so (by \cite[Exercice 2.1 p. 79]{Ha}) the group $\Aut({\cal E})$ is an open affine subvariety of the vector space $H^0(X,{\cal E}nd(E))$, as claimed.
 
 The composition map $\Aut(E)\times \Aut(E)\to \Aut(E)$ is the restriction to $\Aut(E)$ of the bilinear map $H^0(X,{\cal E}nd(E))\times H^0(X,{\cal E}nd(E))\to H^0(X,{\cal E}nd(E)) $ given by composition of endmorphisms, so it is regular. 
 
 For the inversion map: Via the canonical isomorphism ${\cal E}\textmap{\simeq} (\wedge^{r-1}{\cal E})^\vee \otimes \det({\cal E})$ (see \cite[Exercice 5.16, section II.5]{Ha}) the inversion map $f\mapsto f^{-1}$ is given by $f\mapsto \trp{(\wedge^{r-1} (f))}\det(f)^{-1}$. It suffices to note that $f\mapsto \trp{(\wedge^{r-1} (f))}$ is the restriction to $\Aut({\cal E})$ of a homogenous polynomial map of degree $r-1$ on the vector space $H^0(X,{\cal E}nd(E))$ and, again by \cite[Exercice 2.1 p. 79]{Ha},  $f\mapsto \det(f)^{-1}$ is a regular function on the affine subvariety $\Aut({\cal E})$.

 \end{proof}

 \subsection{Affine bundles of rank 2 over Riemann surfaces} \label{AffineRieamannSect}
 
 Let $C$ be a Riemann surface. We have two classes of affine bundles of rank $r$ on $C$: 
 
 \begin{enumerate}
 	\item The affine bundles $A$ with $L(A)$ semi-stable,
 	\item The affine bundles $A$ with $L(A)$ unstable.
 \end{enumerate}

Since we are especially interested in the case $C=\P^1_\C$, we will focus on the second class.
For an unstable bundle $E$ of rank 2 we will denote by $D_E\subset E$ the maximal destabilising line subbundle of $E$ and by
$$
p_E:E\to Q_E\edf E/D_E
$$
the canonical epimorphism onto the corresponding quotient. 

\begin{re}\label{Invariance-of-DE}
Let $f : F \to E$ be an isomorphism of unstable vector  bundles. Then we have $f(D_F)=D_E$, so $f$ induces an isomorphism  $\tilde f : Q_F \to Q_E$ with the property
$$\tilde f \circ p_F=p_E\circ f. $$

\end{re}

We define:
\begin{dt} Let $A$ be an affine bundle of rank 2  on $C$ with $L(A)$ unstable, and let $x_0\in C$.
\begin{enumerate}
	\item A will be called non-degenerate 	if $H^1(p_E)(h_{A})$ is non-zero in $H^1(C,{\cal Q}_E)$.
	\item An $x_0$-framing of $A$ is an isomorphism $\varepsilon:\C\textmap{\simeq} D_{L(A),x_0}$.
\end{enumerate}

An $x_0$-framed affine bundle of rank 2 is an affine bundle  as above endowed with   an $x_0$-framing. An isomorphism $(A,\varepsilon)\to (A',\varepsilon')$ of $x_0$-framed affine  bundles is an isomorphism $\psi:A\to A'$ such that $L(\psi)\circ \varepsilon=\varepsilon'$.
\end{dt}

Let now $E$ be a fixed rank 2 unstable vector bundle on $C$. Our goal is the classification of isomorphism classes of affine bundle $A$ on $C$ with $L(A)\simeq E$. We will handle this problem in several steps:

\begin{enumerate}[(S1)]
	\item Classify all $x_0$-framed non-degenerate affine bundles $A$ with $L(A)\simeq E$.
	\item Classify all non-degenerate affine bundles $A$ with $L(A)\simeq E$.
	\item  Classify all  affine bundles $A$ with $L(A)\simeq E$
\end{enumerate}

Note that,  by Remark \ref{Invariance-of-DE}, any automorphism of $E$ leaves invariants its maximal destabilising line bundle. Let $\Aut(E)_0$ be the closed subgroup of $\Aut(E)$ consisting of automorphism which induce the identity on $D_{E}$.   Let also $H^1(C,{\cal E})_0\subset H^1(C,{\cal E})$ be the Zariski open subspace defined as follows
$$
H^1(C,{\cal E})_0\edf \{h\in H^1(C,{\cal E})|\ H^1(p_E)(h)\ne 0\}.
$$

Taking into account Theorem \ref{ClassTh-intro}, we can prove now Theorem \ref{ClassTh2-intro} stated in the introduction: 

%
%

\begin{proof} (of Theorem \ref{ClassTh2-intro}):

(1)	The first statement is obvious. 
  
(2) Note first that, under the assumption $L(A)\simeq E$, such an isomorphism $f$ always exists. 
	
	 The map 
$$
[(A,\varepsilon)]\mapsto [H^1(f)(h_{A})]_{\Aut(E)_0}
$$
between the set of isomorphism classes  of $x_0$-framed non-degenerate affine holomorphic rank $r$-bundles $A$  on $C$ with $L(A)\simeq E$ and $H^1(C,{\cal E})_0/\Aut(E)_0$ is obviously well defined. It is also clearly surjective by Theorem \ref{ClassTh-intro}.

For the injectivity, let now  $(A,\varepsilon)$, $(B,\sigma)$ be two $x_0$-framed non-degenerate affine bundles, and $f: L(A)\textmap{\simeq} E$, $g: L(B)  \textmap{\simeq}E$  vector bundle isomorphisms   satisfying  
$$f\circ \varepsilon=g \circ \sigma=\varepsilon_0,$$
such that   $H^1(f)(h_{B})$, $H^1(g)(h_{B})$ belong to the same $\Aut(E)_0$-orbit. 

Therefore, there exists an isomorphism $\psi \in \Aut(E)_0$ such that 
	 $$H^1(\psi)(H^1(f)(h_{A}))=H^1(g)(h_{B}). $$
Putting $b:=g^{-1}\circ\psi\circ f$ note that $H^1(b)(h_{A})=h_{B}$, so, by Lemma \ref{nice-lemma}, there exists an affine bundle isomorphism $\beta:A\to B$ such that $L(\beta)=b$. It suffices to note that $L(\beta)\circ\varepsilon=b\circ \varepsilon=\sigma$, so $\beta$ is an isomorphism of framed affine bundles $(A,\varepsilon)\to (B,\sigma)$.

\end{proof}

\section{Affine bundles over \texorpdfstring{$\P^1_\C$}{0}}\label{P1section}

\subsection{The general case}\label{general-case}
Let ${\cal E}$ be a locally free sheaf of rank $r$ on $\P^1_\C$. By Grothendieck's classification theorem (See \cite{Gro})  we may assume that 
$${\cal E}=\bigoplus_{i=1}^m \mathcal{O}_{{\P^1_\C}}(n_i)^{\oplus s_i},$$ 
where $n_i\in\Z$ such that
$$
n_1> \dots > n_m, \ r=\sum_{i=1}^m s_i,  
$$ 
which gives an identification
$${\cal E}^* \otimes {\cal E} =\bigoplus_{i,j=1}^m \mathcal{O}_{\P^1_\C}(n_i-n_j)^{\oplus s_i s_j}$$
An element of the space $\End({\cal E})=H^0(\P^1_\C,{\cal E}^* \otimes {\cal E})$ of global endomorphisms of ${\cal E}$ can be identified with  an upper triangular  matrix 
$$A=(A_{ij})_{1\leq i,j\leq m},$$
where 
$$A_{ij}\in M_{s_is_j}(H^0(\P^1_\C,{\cal O}_{\P^1_\C} (n_i-n_j)))=M_{s_is_j}(\C[X_0,X_1]_{n_i-n_j}).$$
Here we use the notation $\C[X_0,X_1]_d$ for the complex vector space of homogeneous polynomials of degree $d$ in the variables $X_0$, $X_1$. We will use the convention $\C[X_0,X_1]_d=0$ for $d<0$.

Note that the $i$-th diagonal element $A_{ii}$ of $A$ belongs to $M_{s_is_i}(\C)$ and an element $A\in \End({\cal E})$ is invertible if and only if $\det(A_{ii})\ne 0$ for $1\leq i\leq m$. Therefore
\begin{equation}\label{Aut(E)}
\begin{split}
 \Aut(\mathcal{E})= \big\{(A_{ij})_{1\leq i,j\leq m}\,  \vline \ A_{ij} \in {M}_{s_i s_j} (\mathbb{C}&[X_0,X_1]_{n_i-n_j}),\\
 &   A_{ii}\in\GL(s_i,\C) \hbox{ for }1\leq i\leq m\big\}. 
 \end{split}
\end{equation}
This is obviously an affine  algebraic group, which confirms the general  Proposition \ref{Aut(E)AffAlg} proved above.

 Taking into account the isomorphism $\mathcal{K}_{\mathbb{P}_\C^1}\simeq \mathcal{O}_{\mathbb{P}_\C^1}(-2)$, and using the Serre Duality theorem we obtain the following:

\begin{equation}\label{H1E}
\begin{split}
   H^1(X,\mathcal{E})=& \bigoplus_{i=1}^m H^1(\mathbb{P}^1_\C,\mathcal{O}_{\P^1_\C}(n_i))^{\oplus s_i}    
 \simeq \bigoplus_{i=1}^m H^0(\mathbb{P}^1_\C,  {\mathcal{O}_{\P^1_\C}(-n_i-2))^*}^{\oplus s_i} \\
\simeq  &\bigoplus_{i=1}^m (\mathbb{C}[X_0,X_1]_{l{_i}}^*)^{\oplus s_i},  
  \end{split}
\end{equation}
where $l_i\edf -2-n_i$.

Taking into account   Theorem \ref{ClassTh-intro}, we obtain the following general result which solves the classification problem of affine bundles of arbitrary rank on $\P^1_\C$: 

\begin{pr} \label{general-case-prop} The moduli space of  affine bundles on $\P^1_\C$ with fixed linearisation type $[\bigoplus_{i=1}^m \mathcal{O}_{\mathbb{P}^1}(n_i)^{\oplus s_i}]$ is naturally identified with the quotient
$$
 \bigoplus_{i=1}^m (\mathbb{C}[X_0,X_1]_{-2-n_i}^*)^{\oplus s_i}\big / G$$
where  $G=\Aut(\bigoplus_{i=1}^m \mathcal{O}_{\mathbb{P}^1}(n_i)^{\oplus s_i})$ is the affine algebraic group 
$$
G=\big\{(A_{ij})_{1\leq i,j\leq m}\,  \vline \ A_{ij} \in {M}_{s_i s_j} (\mathbb{C} [X_0,X_1]_{n_i-n_j}), 
     A_{ii}\in\GL(s_i,\C) \hbox{ for }1\leq i\leq m\big\} 
$$
acting on $ \bigoplus_{i=1}^m (\mathbb{C}[X_0,X_1]_{-2-n_i}^*)^{\oplus s_i}$ as follows:  for any $A= (A_{ij})_{1\leq i,j\leq m}\in G$ and $\Phi\edf (\Phi_i)_{1\leq i \leq m}\in H^1(X,\mathcal{E})$,     
$$
A \bullet \Phi = (\sum_{j=1}^m \trp{m}_{A_{ij}}(\Phi_j))_{1\leq i \leq m}.
$$
Here $m_{A_{ij}}$ stands for the multiplication map 
$\C[X_0,X_1]_{-2-n_i}\to \C[X_0,X_1]_{-2-n_j}$
by the matrix $A_{ij}$, and $\trp{m}_{A_{ij}}$ stands for its transpose.
\end{pr}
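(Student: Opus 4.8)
The plan is to feed the explicit descriptions of $H^1(\P^1_\C,{\cal E})$ and of $\Aut({\cal E})=G$ obtained in (\ref{H1E}) and (\ref{Aut(E)}) into the general classification Theorem \ref{ClassTh-intro}. By that theorem the moduli space in question is the topological quotient $H^1(\P^1_\C,{\cal E})/\Aut(E)$, where $\Aut(E)=\Aut({\cal E})$ acts on $H^1(\P^1_\C,{\cal E})$ through the induced automorphisms $A\mapsto H^1(A)$. Since the identifications (\ref{H1E}) and (\ref{Aut(E)}) are isomorphisms of (respectively) a finite-dimensional vector space and an affine algebraic group, they are homeomorphisms, and transporting the quotient along them changes nothing topologically; moreover, as $G$ is a group, the orbit of a class under $A\mapsto H^1(A)$ coincides with its orbit under $A\mapsto H^1(A^{-1})$, so the precise direction of the action is immaterial for the quotient. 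Thus the entire content of the proposition is the computation of the induced action in the coordinates of (\ref{H1E}), and the proof reduces to verifying the displayed formula for $A\bullet\Phi$.

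First I would reduce to the scalar case. Writing $A=(A_{ij})$ with $A_{ij}\in M_{s_is_j}(\C[X_0,X_1]_{n_i-n_j})$ and viewing it as the sheaf morphism $\mathcal{O}_{\P^1_\C}(n_j)^{\oplus s_j}\to\mathcal{O}_{\P^1_\C}(n_i)^{\oplus s_i}$ given by multiplication by the polynomial matrix $A_{ij}$, the functoriality of $H^1$ together with the decomposition $H^1(\P^1_\C,{\cal E})=\bigoplus_i H^1(\P^1_\C,\mathcal{O}_{\P^1_\C}(n_i))^{\oplus s_i}$ shows that the $(i,j)$-block of $H^1(A)$ is $H^1(A_{ij})$. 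The matrix bookkeeping is then routine, so everything comes down to identifying, for a single polynomial $Q\in\C[X_0,X_1]_{n_i-n_j}$ and the corresponding multiplication $m_Q:\mathcal{O}_{\P^1_\C}(n_j)\to\mathcal{O}_{\P^1_\C}(n_i)$, the induced map $H^1(m_Q):H^1(\mathcal{O}_{\P^1_\C}(n_j))\to H^1(\mathcal{O}_{\P^1_\C}(n_i))$ under the Serre-duality identification $H^1(\mathcal{O}_{\P^1_\C}(n))\simeq\C[X_0,X_1]_{-2-n}^*$ used in (\ref{H1E}).

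The key step is the functoriality of Serre duality. I claim that under these identifications $H^1(m_Q)$ becomes precisely the transpose $\trp{(m_Q)}$ of the multiplication map $m_Q:\C[X_0,X_1]_{-2-n_i}\to\C[X_0,X_1]_{-2-n_j}$ (the degree shift by $n_i-n_j$ is exactly right, and matches the domains in the statement). This follows from the compatibility of the cup-product Serre pairing $H^1(\mathcal{O}_{\P^1_\C}(n))\times H^0(\mathcal{O}_{\P^1_\C}(-2-n))\to H^1(\mathcal{O}_{\P^1_\C}(-2))\simeq\C$ with sheaf morphisms: for $\alpha\in H^1(\mathcal{O}_{\P^1_\C}(n_j))$ and $\beta\in H^0(\mathcal{O}_{\P^1_\C}(-2-n_i))$ one has the adjunction $\langle H^1(m_Q)\alpha,\beta\rangle=\langle\alpha,m_Q\beta\rangle$, because cup product is compatible with multiplication by $Q$. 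Summing the blocks over $j$ then yields $(A\bullet\Phi)_i=\sum_{j}\trp{m}_{A_{ij}}(\Phi_j)$, as asserted.

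The step I expect to be the main obstacle is making this Serre-duality functoriality fully precise and convention-proof (the correct degree, the correct multiplication, and the absence of a stray sign). Rather than invoke abstract naturality, I would verify it by an explicit \v{C}ech computation on the standard affine cover $\{X_0\ne0\}$, $\{X_1\ne0\}$ of $\P^1_\C$: there $H^1(\mathcal{O}_{\P^1_\C}(n))$ has the familiar basis of negative-degree monomial cocycles, the pairing with $H^0(\mathcal{O}_{\P^1_\C}(-2-n))$ is the residue (coefficient-extraction) pairing between monomials, and $H^1(m_Q)$ acts literally by multiplying a cocycle by $Q$. Computing the adjoint of this operation with respect to the monomial pairing produces multiplication by $Q$ on the dual polynomial spaces, i.e.\ exactly $\trp{m}_Q$; this \v{C}ech model carries the real content and removes any ambiguity in the identifications (\ref{H1E}), after which the passage to the topological quotient is immediate since the identifications are equivariant homeomorphisms.
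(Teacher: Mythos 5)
Your proposal is correct and follows essentially the same route as the paper: the paper likewise obtains Proposition \ref{general-case-prop} by combining the general classification Theorem \ref{ClassTh-intro} with the Grothendieck decomposition, the explicit description (\ref{Aut(E)}) of $\Aut(\mathcal{E})$, and the Serre-duality identification (\ref{H1E}), under which the induced action of an automorphism on $H^1$ is the transpose of the multiplication maps. Your explicit verification of the Serre-duality naturality $\langle H^1(m_Q)\alpha,\beta\rangle=\langle\alpha,m_Q\beta\rangle$ via the \v{C}ech residue pairing is a correct filling-in of a step the paper leaves implicit.
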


This result shows already how difficult is the classification  of affine bundles compared to the classification of vector bundles. 
Note that this quotient is in general highly non-Hausdorff and difficult to describe geometrically in full generality.

\subsection{The case \texorpdfstring{$r=2$}{1}. General classification results}

In this section we will specify the action of the group $\Aut(\mathcal{E})$ on $H^1(\mathbb{P}_\C^1,\mathcal{E})$ through the  identifications given in section \ref{P1section} and we will give explicit descriptions for the quotients intervening in Theorem \ref{ClassTh2-intro} (in the case $X=\P^1_\C$). 
 
\vspace{2mm}

To simplify the formalism, we will study in detail the case $r=2$ and $s_1=s_2=1$. We can assume ${\cal E}= {\cal O}_{\P^1_\C}(n_1)\oplus {\cal O}_{\P^1_\C}(n_2)$ with $n_1>n_2$. In other words we suppose that ${\cal E}$ is unstable (non-semistable). The maximal destabilising subsheaf of ${\cal E}$ is ${\cal O}_{\P^1_\C}(n_1)$.

We also assume that $-2>n_1$, so that the two cohomology spaces $H^1(\P^1_\C, {\cal O}(n_i))$ are both non-trivial. Putting $l_i:= -n_i-2>0$, we obtain  canonical identifications

$$
H^1({\P^1_\C},{\cal E})=\mathbb{C}[X_0,X_1]_{l_1}^*\oplus \mathbb{C}[X_0,X_1]_{l_2}^*, 
$$
\begin{equation}\label{formula-for-Aut}
\Aut({\cal E})\simeq G\edf \left\{\begin{pmatrix}
u_1 & Q\\ 0 &u_2	
\end{pmatrix}\,\vline\ u_1,\ u_2\in\mathbb{C}^*,\ Q\in \C[X_1,X_1]_d
 \right\}\hbox{ with } d:=l_2-l_1,
\end{equation}
and, via these identifications, $\Aut(\Ec)$ acts on $H^1({\P^1_\C},{\cal E})$ by
\begin{equation}\label{explicit-action}
\begin{pmatrix}
u_1 & Q\\ 0 &u_2	
\end{pmatrix}\begin{pmatrix}
\Phi_1\\ \Phi_2	
\end{pmatrix}=\begin{pmatrix}
u_1\Phi_1+\trp{m}_Q(\Phi_2)\\ u_2\Phi_2	
\end{pmatrix}	
\end{equation}
where $m_Q:\C[X_0,X_1]_{l_1}\to \C[X_0,X_1]_{l_2} $ denotes the linear map defined by multiplication with $Q$, and $\trp{m}_Q$ stands for its transpose.

Note that that the group $\Aut({\cal E})$ fits in the short exact sequence 

\begin{equation}\label{ShExSeqAut}
\{1\}\to  N:=\left\{\begin{pmatrix}
1 & Q\\ 0 &1	
\end{pmatrix}\,\vline \ Q\in \C[X_1,X_1]_d
 \right\}\hookrightarrow \Aut({\cal E})\to \C^*\times\C^*\to \{1\}  	
\end{equation}
whose kernel $N$ is isomorphic to $\C[X_1,X_1]_d$ via the obvious map.  Note that via the identification \ref{formula-for-Aut},   $\Aut({\cal E})_0$  corresponds to the subgroup $G_0\subset G$ defined by the condition $u_1=1$. By the classification  Theorems \ref{ClassTh-intro}, \ref{ClassTh2-intro}  we obtain
\begin{pr}\label{ClassifPropOnP1} Let $E$ be the underlying vector bundle of ${\cal E}={\cal O}_{\P^1_\C}(n_1)\oplus {\cal O}_{\P^1_\C}(n_2)$ with $-2>n_1>n_2$. Put $l_i\edf -2-n_i$, $d\edf l_2-l_1$. Fix $x_0\in\P^1_\C$. Let the groups 
\begin{equation}
\begin{split}
G\edf &\left\{\begin{pmatrix}
u_1 & Q\\ 0 &u_2	
\end{pmatrix}\,\vline\ u_1,\ u_2\in\mathbb{C}^*,\ Q\in \C[X_1,X_1]_d\right\},\\
G_0\edf &\left\{\begin{pmatrix}
1 & Q\\ 0 &u_2	
\end{pmatrix} \,\vline\   u_2\in\mathbb{C}^*,\ Q\in \C[X_1,X_1]_d\ \right\} 
\end{split}
\end{equation}
act  on $\mathbb{C}[X_0,X_1]_{l_1}^*\oplus \mathbb{C}[X_0,X_1]_{l_2}^*$ via formula (\ref{explicit-action}).
\begin{enumerate}
\item 	The moduli space of isomorphism classes of (non-degenerate) affine bundles $A$ on $\P^1_\C$ with $L(A)\simeq  E$ is homeomorphically identified with the quotient
\begin{equation}\label{quotients-def}
\begin{split}
{\cal Q}\edf &\qmod{\mathbb{C}[X_0,X_1]_{l_1}^*\oplus \mathbb{C}[X_0,X_1]_{l_2}^*}{G}, \hbox{ respectively }\\
{\cal Q}_0\edf &\qmod{\mathbb{C}[X_0,X_1]_{l_1}^*\oplus \big(\mathbb{C}[X_0,X_1]_{l_2}^*\setminus\{0\}\big)}{G}.
\end{split}
\end{equation}

\item The moduli space of isomorphism classes of $x_0$-framed (non-degenerate) affine $(A,\varepsilon)$ on $\P^1_\C$ with $L(A)\simeq  E$ is homeomorphically identified with the quotient
\begin{equation}\label{quotients-framed-def}
\begin{split}
\Qg\edf &\qmod{\mathbb{C}[X_0,X_1]_{l_1}^*\oplus \mathbb{C}[X_0,X_1]_{l_2}^*}{G_0}, \hbox{ respectively }\\
\Qg_0\edf &\qmod{\mathbb{C}[X_0,X_1]_{l_1}^*\oplus \big(\mathbb{C}[X_0,X_1]_{l_2}^*\setminus\{0\}\big)}{G_0}.
\end{split}
\end{equation}

\end{enumerate}
	
\end{pr}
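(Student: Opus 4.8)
The plan is to deduce the whole statement from the classification Theorems \ref{ClassTh-intro} and \ref{ClassTh2-intro}, transporting them through the explicit identifications already set up in this section, and then to check that the resulting bijections of sets are in fact homeomorphisms. The ingredients in place are: the linear isomorphism $H^1(\P^1_\C,{\cal E})\cong \mathbb{C}[X_0,X_1]_{l_1}^*\oplus \mathbb{C}[X_0,X_1]_{l_2}^*$ from (\ref{H1E}), the isomorphism of algebraic groups $\Aut(E)\cong G$ from (\ref{formula-for-Aut}), and the fact, recorded in the action formula (\ref{explicit-action}), that these two isomorphisms are equivariant.

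For part (1), Theorem \ref{ClassTh-intro} gives a bijection between isomorphism classes of affine bundles $A$ with $L(A)\simeq E$ and $H^1(\P^1_\C,{\cal E})/\Aut(E)$; transporting along the isomorphisms above produces exactly ${\cal Q}$. For the non-degenerate variant I would invoke Theorem \ref{ClassTh2-intro}(1), which restricts attention to the open subset $H^1(\P^1_\C,{\cal E})_0$. The compatibility to verify is that, since the maximal destabilising subbundle is $D_E={\cal O}_{\P^1_\C}(n_1)$ and the quotient is $Q_E={\cal O}_{\P^1_\C}(n_2)$, the morphism $p_E$ is the projection onto the second summand, so under the Serre-duality identification of (\ref{H1E}) the map $H^1(p_E)$ becomes $(\Phi_1,\Phi_2)\mapsto \Phi_2$. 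Consequently $H^1(\P^1_\C,{\cal E})_0$ corresponds to $\{\Phi_2\neq 0\}$ and we obtain ${\cal Q}_0$.

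For part (2), I would apply Theorem \ref{ClassTh2-intro}(2) with $\Aut(E)_0$ in place of $\Aut(E)$. Under (\ref{formula-for-Aut}) the subgroup $\Aut(E)_0$ corresponds to $G_0$, the condition $u_1=1$ expressing precisely that the automorphism restricts to the identity on the first summand $D_E={\cal O}_{\P^1_\C}(n_1)$. The same transport then yields $\Qg$ and, after restricting to $\{\Phi_2\neq 0\}$, $\Qg_0$.

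It remains to upgrade these bijections to homeomorphisms. The isomorphism (\ref{H1E}) is a homeomorphism of finite-dimensional complex vector spaces, (\ref{formula-for-Aut}) is an isomorphism of topological groups, and (\ref{explicit-action}) asserts their equivariance; since the orbit projection of a continuous group action is open, an equivariant homeomorphism descends to a homeomorphism of the orbit spaces, which settles the unframed cases. For the framed moduli spaces the topology is, by definition (see the discussion following Theorem \ref{ClassTh2-intro}), the one transported from $H^1(\P^1_\C,{\cal E})/\Aut(E)_0$, so there the homeomorphism assertion is tautological once the equivariant identification is recorded. Since the genuine analytic input (Serre duality, the transpose $\trp{m}_Q$, and the derivation of (\ref{explicit-action})) is already carried out before the statement, the remaining work is essentially bookkeeping; I expect the one step demanding real care to be the matching of the non-degeneracy condition $H^1(p_E)(h_A)\neq 0$ with the open condition $\Phi_2\neq 0$, which requires keeping track of which summand is destabilising and of how Serre duality interacts with the projection $p_E$.
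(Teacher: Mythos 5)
Your proposal is correct and follows essentially the same route as the paper, which derives Proposition \ref{ClassifPropOnP1} directly from Theorems \ref{ClassTh-intro} and \ref{ClassTh2-intro} via the identifications (\ref{H1E}), (\ref{formula-for-Aut}) and (\ref{explicit-action}), with $\Aut(E)_0$ corresponding to $G_0$ (the condition $u_1=1$) and the non-degeneracy locus $H^1(\P^1_\C,{\cal E})_0$ corresponding to $\{\Phi_2\neq 0\}$. Your explicit verification that $H^1(p_E)$ becomes the projection $(\Phi_1,\Phi_2)\mapsto\Phi_2$, and your remarks on the (essentially tautological) homeomorphism claims, are exactly the bookkeeping the paper leaves implicit.
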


The quotients ${\cal Q}$, ${\cal Q}_0$, $\Qg$, $\Qg_0$ will be endowed with the quotient topologies.

\subsection{The case \texorpdfstring{$r=2$}{2}.  Framed  non-degenerate  affine bundles}\label{Cokertopoiso}

By  Proposition \ref{ClassifPropOnP1} the set of isomorphism classes of  $x_0$-framed  non-degenerate  affine bundles $A$ on $\P^1_\C$ with $L(A)\simeq  E$ is identified with the quotient $\Qg_0$. The goal of this section is to describe explicitly this quotient endowed with its natural quotient topology.

We have the following diagram
\begin{equation}\label{1st-diag}
\begin{tikzcd}[column sep =-1mm]
(\mathbb{C}[X_0,X_1]_{l_2}^*	\setminus\{0\})\times \C[X_0,X_1]_d\ar[rd] \ar[rr, "\eta"]& &(\mathbb{C}[X_0,X_1]_{l_2}^*	\setminus\{0\})\times  \mathbb{C}[X_0,X_1]_{l_1}^*\ar[dl]\\
&\mathbb{C}[X_0,X_1]_{l_2}^*	\setminus\{0\}&
\end{tikzcd}
\end{equation}
of trivial linear spaces over $\C[X_0,X_1]_{l_2}^*\setminus\{0\}$, where $\eta$ is the fibrewise linear map
$$
\eta(\Phi_2,Q)=(\Phi_2,\trp{m}_Q(\Phi_2))
$$

Factorising all the spaces in the above diagram by the subgroup
$$
G_0^2\edf \left\{\begin{pmatrix}
1 & 0\\ 0 &u_2	
\end{pmatrix} \,\vline\   u_2\in\mathbb{C}^*  \right\}\simeq\C^*
$$
of $G_0$, we obtain the following diagram of linear spaces over the projective space
${\cal P}= \mathbb{C}[X_0,X_1]_{l_2}^*	\setminus\{0\}/\C^*=\P(\mathbb{C}[X_0,X_1]_{l_2}^*)$ 
\begin{equation}\label{2nd-diag}
\begin{tikzcd}[column sep =7mm]
\Theta_{\cal P}\otimes_\C \C[X_0,X_1]_d\ar[rd] \ar[rr, "H"]& &{\cal P}\times    \mathbb{C}[X_0,X_1]_{l_1}^*\ar[dl]\,,\\
&{\cal P}&
\end{tikzcd}
\end{equation}
where $\Theta_{\cal P}=|{\cal O}_{\cal P}(-1)|$ is the tautological line bundle of ${\cal P}$. Note that  we have an obvious set-theoretical identification  
$$
\Qg_0=\coprod_{[\Phi]\in {\cal P}}  \mathrm{coker} (Q\mapsto \trp{m}_Q(\Phi)).
$$

For the topology of $\Qg_0$ we have the following result:
\begin{pr}\label{homeo-Coker}
We have a natural homeomorphism $\Qg_0\textmap{\simeq}\Coker(H)$, where $\Coker(H)$ stands for the topological cokernel of $H$ (see Definition \ref{DefCoker} in the Appendix).
\end{pr}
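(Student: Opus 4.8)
The plan is to prove that the natural set-theoretic bijection $\Qg_0 \to \Coker(H)$ is a homeomorphism by tracing through the construction of both quotient topologies and showing that the identification is compatible with them. First I would make the set-theoretic identification completely explicit: starting from the definition $\Qg_0 = \big(\mathbb{C}[X_0,X_1]_{l_1}^*\oplus(\mathbb{C}[X_0,X_1]_{l_2}^*\setminus\{0\})\big)/G_0$, I would observe that the action of $G_0$ factors through the diagram (\ref{1st-diag}). Concretely, the normal subgroup $N \subset G_0$ (the unipotent part parametrised by $Q \in \C[X_0,X_1]_d$) acts on the $\Phi_1$-coordinate by $\Phi_1 \mapsto \Phi_1 + \trp{m}_Q(\Phi_2)$, so that quotienting first by $N$ sends the pair $(\Phi_1,\Phi_2)$ to the class of $\Phi_1$ in $\mathrm{coker}(Q\mapsto \trp{m}_Q(\Phi_2))$; the remaining $\C^* = G_0^2$-action then rescales $\Phi_2$, producing exactly the relative cokernel of $H$ over $\cal P$. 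This recovers the stated fibrewise identification $\Qg_0 = \coprod_{[\Phi]} \mathrm{coker}(Q\mapsto \trp{m}_Q(\Phi))$.

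The key technical step is to verify that the two topologies agree, and here I would exploit the fact that $G_0$ is a semidirect product $N \rtimes G_0^2$ with $N \cong \C[X_0,X_1]_d$ abelian. The strategy is to quotient in two stages and check at each stage that the quotient topology coincides with the one built into the definition of $\Coker(H)$. Since taking a quotient by a group action can be performed in stages — $X/G_0 \cong (X/N)/G_0^2$ with the quotient topologies, because $N$ is normal in $G_0$ — I would first identify $X/N$ with the total space obtained by fibrewise passing to cokernels of $\eta$, and then identify the further $\C^*$-quotient with $\Coker(H)$. The definition of $\Coker(H)$ in the appendix (Definition \ref{DefCoker}) is presumably precisely the topological cokernel of a morphism of linear spaces over $\cal P$, i.e. the quotient of the target linear space ${\cal P}\times \mathbb{C}[X_0,X_1]_{l_1}^*$ by the fibrewise image of the source $\Theta_{\cal P}\otimes_\C\C[X_0,X_1]_d$ under $H$; so the content is to match this construction with the iterated group quotient.

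The main obstacle I expect is the descent of the topology under the non-free, non-proper $\C^*$-action and the passage from the trivial-bundle picture over $\mathbb{C}[X_0,X_1]_{l_2}^*\setminus\{0\}$ to the twisted picture over $\cal P$ involving the tautological bundle $\Theta_{\cal P}$. The fibre dimension of $\mathrm{coker}(Q\mapsto \trp{m}_Q(\Phi))$ jumps as $[\Phi]$ varies (this is exactly the stratification studied later in the paper), so the cokernel is emphatically not a vector bundle and the topology is non-Hausdorff; one must therefore argue purely at the level of universal properties of quotient topologies rather than by any local triviality. I would handle the twist by the standard observation that the $\C^*$-quotient of the trivial family $(\mathbb{C}[X_0,X_1]_{l_2}^*\setminus\{0\})\times V$, where $\C^*$ acts by $u_2$ on the base and with the appropriate weight on $V$, is canonically the linear space over $\cal P$ associated with $\Theta_{\cal P}^{\otimes k}\otimes V$ for the relevant weight $k$; matching the weights coming from (\ref{explicit-action}) against those in (\ref{2nd-diag}) then pins down the twist.

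The remaining verifications are routine: continuity of the bijection in both directions follows from the universal property of the quotient topology applied to the continuous maps $\eta$ and the projections, together with the fact that the canonical projections in (\ref{1st-diag}) and (\ref{2nd-diag}) are open (being quotient maps by group actions, hence open, since group translations are homeomorphisms). Once the two staged quotients are identified as topological spaces, the homeomorphism $\Qg_0 \textmap{\simeq} \Coker(H)$ follows formally.
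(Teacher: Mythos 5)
Your proposal is correct, but it takes the opposite route from the paper. You quotient first by the normal unipotent subgroup $N\simeq\C[X_0,X_1]_d$, invoking the group-theoretic staged quotient $\Qg_0\simeq (X/N)\big/(G_0/N)$ (the paper's Corollary \ref{quotient-2steps-coro}), identify $X/N$ with the topological cokernel of $\eta$ over $\C[X_0,X_1]_{l_2}^*\setminus\{0\}$, and then pass to the $\C^*$-quotient, handling the $\Theta_{\cal P}$-twist by weight matching. The paper does the reverse: it quotients first by $G_0^2\simeq\C^*$, and since $G_0^2$ is \emph{not} normal in $G_0$ (a point the paper makes explicitly, noting that Corollary \ref{quotient-2steps-coro} cannot be used in that order), it works instead with the general two-step quotient for arbitrary equivalence relations (Proposition \ref{quotient-2steps}), defining the relation $S$ on ${\cal P}\times\C[X_0,X_1]_{l_1}^*$ whose quotient is $\Coker(H)$ by Definition \ref{DefCoker} and checking that the pulled-back relation $R_S$ coincides with the $G_0$-orbit relation. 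Interestingly, the paper remarks that your route yields precisely ``an alternative description of $\Qg_0$ as the $\C^*$-quotient of a topological cokernel'' but does not pursue it. One point you should make explicit to close your argument: your final step, identifying $\Coker(\eta)/\C^*$ with $\Coker(H)$, is itself not an instance of the normal-subgroup staging (here the non-normality of $G_0^2$ resurfaces, since $\Coker(H)$ is by definition a quotient of $X/G_0^2$); the clean way to finish is to observe that both spaces receive quotient maps from the same total space $(\C[X_0,X_1]_{l_2}^*\setminus\{0\})\times\C[X_0,X_1]_{l_1}^*$ -- compositions of quotient maps being quotient maps -- with identical fibres, namely the $G_0$-orbits (here one uses that $\mathrm{im}\big(Q\mapsto\trp{m}_Q(u\Phi)\big)=\mathrm{im}\big(Q\mapsto\trp{m}_Q(\Phi)\big)$ for $u\in\C^*$), which is exactly the verification the paper performs. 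Your explicit factorisation of the $G_0$-action in the first paragraph contains this check in substance, so the gap is expository rather than mathematical; what your approach buys is a more structural picture exploiting $G_0=N\rtimes G_0^2$ and the descent of linear spaces through the $\C^*$-quotient, while the paper's order of operations gives a shorter proof with a single application of Proposition \ref{quotient-2steps}.
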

\begin{proof}

Note first that the right hand space  ${\cal P}\times    \mathbb{C}[X_0,X_1]_{l_1}^*$ in (\ref{2nd-diag}) is precisely the quotient (in the topological sense) of $\mathbb{C}[X_0,X_1]_{l_1}^*\oplus \big(\mathbb{C}[X_0,X_1]_{l_2}^*\setminus\{0\}\big)$ by the equivalence relation, say $R$, induced by the $G_0^2$-action on this space.

Let $S$ be the equivalence relation on ${\cal P}\times    \mathbb{C}[X_0,X_1]_{l_1}^*$ defined  by
$$
S\edf \big\{\big(([\Phi],\phi)\,,\, ([\Phi],\phi+\trp{m}_Q(\Phi)))|\ [\Phi]\in{\cal P},\  Q\in  \C[X_1,X_1]_d\big\}.
$$

Note that the quotient space ${\cal P}\times    \mathbb{C}[X_0,X_1]_{l_1}^*/S$ is precisely the topological cokernel of $H$, by the definition of this cokernel. 

Using the elementary Proposition \ref{quotient-2steps} stated below, it suffices to prove that the equivalence relation $R_S$ on  $\mathbb{C}[X_0,X_1]_{l_1}^*\oplus \big(\mathbb{C}[X_0,X_1]_{l_2}^*\setminus\{0\}\big)$ is precisely the equivalence relation associated with the $G_0$-action on this space. This follows easily noting that the equivalence class of a pair $(\Phi,\phi)\in \mathbb{C}[X_0,X_1]_{l_1}^*\oplus \big(\mathbb{C}[X_0,X_1]_{l_2}^*\setminus\{0\}\big)$ with respect to $R_S$ coincides with its $G_0$-orbit.

\end{proof}
Note that Proposition \ref{homeo-Coker} cannot be obtained using Corollary \ref{quotient-2steps-coro} stated below, because the subgroup $G_0^2$ is not normal in $G_0$. Note however that making use of this corollary we obtain an alternative description of $\Qg_0$ as the $\C^*$-quotient of a topological cokernel on $\mathbb{C}[X_0,X_1]_{l_2}^*	\setminus\{0\}$.

\begin{pr}\label{quotient-2steps}
Let $X$ be a topological space, $R\subset X\times X$ an equivalence relation on $X$, $p_R:X\to Q\edf X/R$ the canonical surjection, $S\subset Q\times Q$ an equivalence relation on $Q$, and $p_S:Q\to  Q/S$  the canonical surjection.	 
\begin{enumerate}
\item The subset 
$$
R_S\edf \{(x,y)\in X\times X| (p_R(x),p_R(y))\in S\}\subset X\times X
$$	
is an equivalence relation on $X$. 
\item The composition $ p_S\circ p_R$ is compatible with $R_S$ and the induced map 
$$h:X/R_S\to    Q/S$$
is a homeomorphism with respect to the quotient topologies. 
\end{enumerate}

\end{pr}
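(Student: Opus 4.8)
The plan is to treat part (1) as a direct transfer of the equivalence-relation axioms from $S$ to $R_S$, and to prove part (2) by exhibiting $h$ as a bijective quotient map. For part (1) I would simply check the three axioms: reflexivity, symmetry and transitivity of $R_S$ follow immediately from the corresponding properties of $S$, since membership $(x,y)\in R_S$ is by definition the condition $(p_R(x),p_R(y))\in S$. This is routine and carries no content beyond unwinding definitions.

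For part (2), write $p_{R_S}:X\to X/R_S$ for the canonical surjection. First I would verify compatibility: if $(x,y)\in R_S$ then $(p_R(x),p_R(y))\in S$, hence $p_S(p_R(x))=p_S(p_R(y))$; thus $p_S\circ p_R$ is constant on $R_S$-classes and induces a unique set-theoretic map $h:X/R_S\to Q/S$ with $h\circ p_{R_S}=p_S\circ p_R$. Continuity of $h$ then follows from the universal property of the quotient topology on $X/R_S$. Bijectivity is where the precise definition of $R_S$ enters: surjectivity of $h$ is inherited from surjectivity of the composition $p_S\circ p_R$, while injectivity amounts to the observation that the fibres of $p_S\circ p_R$ are exactly the $R_S$-classes, since $h([x])=h([y])$ means $(p_R(x),p_R(y))\in S$, i.e.\ $(x,y)\in R_S$, i.e.\ $[x]=[y]$.

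It remains to promote the continuous bijection $h$ to a homeomorphism, which is the only step carrying genuine content. Here I would invoke the standard fact that a composition of quotient maps is again a quotient map, so that $p_S\circ p_R$ is a quotient map. Combining this with the identity $(p_S\circ p_R)^{-1}(W)=(h\circ p_{R_S})^{-1}(W)=p_{R_S}^{-1}(h^{-1}(W))$ for $W\subseteq Q/S$, together with the fact that $p_{R_S}$ is itself a quotient map, yields the chain of equivalences: $W$ is open $\iff (p_S\circ p_R)^{-1}(W)$ is open $\iff p_{R_S}^{-1}(h^{-1}(W))$ is open $\iff h^{-1}(W)$ is open. Since $h$ is a bijection, this bi-implication says exactly that both $h$ and $h^{-1}$ are continuous.

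The main (and essentially only) obstacle is this last homeomorphism step; everything else is a formal consequence of the definition of $R_S$. I expect to dispatch it entirely through the quotient-map composition lemma and the preimage identity above, rather than by any direct manipulation of open sets in $Q/S$, which would be more cumbersome.
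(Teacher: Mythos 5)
Your proof is correct and complete: part (1) is the routine transfer of the axioms of $S$, the factorisation $h\circ p_{R_S}=p_S\circ p_R$ together with the fibre computation gives a continuous bijection, and the quotient-map composition lemma plus the identity $(p_S\circ p_R)^{-1}(W)=p_{R_S}^{-1}(h^{-1}(W))$ correctly upgrades $h$ to a homeomorphism. The paper states Proposition \ref{quotient-2steps} without proof, describing it as elementary, and your argument is exactly the standard one it implicitly relies on, so there is nothing to compare beyond noting that you have supplied the omitted details correctly.
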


\begin{co}\label{quotient-2steps-coro}
Let $X$ be a topological space, $G$ an arbitrary group and    
$$\alpha:G\times X\to X$$ 
be an action by homeomorphisms.  Let $N\subset G$ be a normal subgroup and $H\edf G/N$. The quotient $X/N$ comes with a $H$-action induced by $\alpha$ and the natural bijection
$$
h:X/G\to (X/N)/H
$$ 	
is a homeomorphism. 
\end{co}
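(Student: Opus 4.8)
The plan is to deduce this corollary directly from Proposition \ref{quotient-2steps}, by taking for $R$ the orbit equivalence relation of the $N$-action on $X$ and for $S$ the orbit equivalence relation of the induced $H$-action on $X/N$. The two nontrivial verifications are that $X/N$ really does carry an $H$-action, and that the resulting $R_S$ coincides with the $G$-orbit relation; everything else is packaged inside Proposition \ref{quotient-2steps}.

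First I would construct the $H$-action on $Q\edf X/N$. Writing $p_N:X\to X/N$ for the canonical projection and $N\cdot x$ for the $N$-orbit of a point $x$, I would set $(gN)\cdot(N\cdot x)\edf N\cdot(g\cdot x)$ for $g\in G$, $x\in X$. The key point — and the only place where the normality hypothesis enters — is that this is well defined: if $g'=gn$ with $n\in N$ and $N\cdot x'=N\cdot x$, say $x'=n'\cdot x$ with $n'\in N$, then $g'\cdot x'=(gnn'g^{-1})\cdot(g\cdot x)$ with $gnn'g^{-1}\in N$ because $N\trianglelefteq G$, so $N\cdot(g'\cdot x')=N\cdot(g\cdot x)$. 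That this defines an action by homeomorphisms follows from the universal property of the quotient topology: for each $g$ the induced map $\bar\alpha_{gN}:X/N\to X/N$ satisfies $\bar\alpha_{gN}\circ p_N=p_N\circ\alpha_g$, hence is continuous, with continuous inverse $\bar\alpha_{g^{-1}N}$.

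With this action in hand I would let $S\subset Q\times Q$ be its orbit equivalence relation, so that $Q/S=(X/N)/H$, and apply Proposition \ref{quotient-2steps} with $R$ the orbit relation of the $N$-action (so $p_R=p_N$). The proposition then yields a homeomorphism $X/R_S\textmap{\simeq}Q/S=(X/N)/H$. It remains to identify $R_S$ with the orbit equivalence relation of the full $G$-action. By definition $(x,y)\in R_S$ exactly when $N\cdot y=(gN)\cdot(N\cdot x)=N\cdot(g\cdot x)$ for some $g\in G$, i.e. when $y\in N\cdot(g\cdot x)$ for some $g$; since $\bigcup_{g\in G}N\cdot(g\cdot x)=G\cdot x$, this says precisely $y\in G\cdot x$. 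Hence $R_S$ is the $G$-orbit relation, $X/R_S=X/G$, and the homeomorphism $h$ is the natural bijection $G\cdot x\mapsto H\cdot(N\cdot x)$ induced by $p_S\circ p_N$.

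There is essentially no serious obstacle here: the topological content of the corollary is concentrated entirely in Proposition \ref{quotient-2steps}, and the two steps above are routine bookkeeping. The only subtlety worth flagging is that the well-definedness of the $H$-action on $X/N$ genuinely requires $N$ to be normal — which is exactly why, as observed after Proposition \ref{homeo-Coker}, this corollary cannot be applied to the non-normal subgroup $G_0^2\subset G_0$, and one must fall back on Proposition \ref{quotient-2steps} directly in that case.
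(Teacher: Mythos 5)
Your proof is correct and follows exactly the route the paper intends: the corollary is stated there without a written proof, as a direct consequence of Proposition \ref{quotient-2steps}, and your deduction --- taking $R$ to be the $N$-orbit relation and $S$ the $H$-orbit relation, using normality to check that the induced $H$-action on $X/N$ is well defined and acts by homeomorphisms, and identifying $R_S$ with the $G$-orbit relation via $\bigcup_{g\in G}N\cdot(g\cdot x)=G\cdot x$ --- is precisely the intended argument. Your closing observation that normality is essential, which is why the corollary does not apply to the non-normal subgroup $G_0^2\subset G_0$, likewise matches the paper's remark following Proposition \ref{homeo-Coker}.
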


\subsection{The duality theorem, the fibres of $\Coker(H)$ over ${\cal P}$ and apolar ideals} 
\label{fibres-apolar-section}

The  goal of this section is to  describe explicitly:
\begin{enumerate}
	\item The projective space ${\cal P}$, the base of the topological cokernel $\Coker(H)$.
	\item The fibres of $\Coker(H)$ over ${\cal P}$, and notably their dimensions.
\end{enumerate}

We will see that the computation of the dimensions of these fibres is related to the classical theory of apolar ideals.\\

The main idea of this section: we identify  the dual space $\C[X_0,X_1]_{l}^*$ with the space of constant-coefficient homogeneous differential operators of order $l$. More precisely we put 
$$\C[\delta_0,\delta_1]_l:=\big\{P(\delta_0,\delta_1)|\ P\in  \C[X_0,X_1]_{l}\big\},$$
where $\delta_i:=\frac{\partial}{\partial X_i}$ and $\delta_0^k \delta_1^{l-k}=\frac{\partial^d}{\partial^k X_0 \partial^{l-k} X_1}$, and we define the linear isomorphism $D_l: \C[\delta_0,\delta_1]_l\to  \C[X_0,X_1]_{l}^*$ by
assigning to $\Delta\in \C[\delta_0,\delta_1]_l$  the linear form
$$D_l(\Delta):  \C[X_0,X_1]_{l}\to \C$$
which maps a polynomial $P\in \C[X_0,X_1]_{l}$ to $\Delta(P)$; here  $\Delta$ is regarded as a differential operator.

\begin{re}
Note that
$$
D_l(\delta_0^k \delta_1^{l-k})(X_0^s X_1^{l-s})=\delta_0^k \delta_1^{l-k}(X_0^s X_1^{l-s})=\left\{
\begin{array}{ccc}
s! (l-s)! &\rm if & k=s,
\\
0 &\rm if & k\ne s.
\end{array}
\right.
$$
This formula compares, via $D_l$, the canonical basis of $\C[\delta_0,\delta_1]_l$ to the dual of the canonical basis  of $\C[X_0,X_1]_{l}$. The inverse of $D_l$ is given by the formula
$$
D_l^{-1}(\Phi)=\sum_{s=0}^l \frac{\Phi(X_0^s X_1^{l-s})}{s!(l-s)!} \delta_0^s\delta_1^{l-s}=\frac{1}{l!}\sum_{s=0}^l \binom{l}{s}\Phi(X_0^s X_1^{l-s})\delta_0^s\delta_1^{l-s}.
$$
\end{re}

From now on we will put $l\edf l_2$ to save on notations and  will thus use the  isomorphism $D_l$ to identify the space ${\cal P}\edf \P(\C[X_0,X_1]_{l}^*)$ to the projective space $\P(\C[\delta_0,\delta_1]_l)$, which can be obviously identified with the symmetric power $S^l(\P^1_\C)$ via the map
$$
P(\delta_0,\delta_1)\mapsto Z(P)
$$
which assigns to a differential operator  $P(\delta_0,\delta_1)$, the zero locus of $P$.
Therefore, from now on we will  identify ${\cal P}$ to  $S^l(\P^1_\C)$ via this map.

Our next goal: describe explicitly the fibres of the map $\Coker(H)\to {\cal P}\simeq S^l(\P^1_\C)$. We will make use of the  duality Theorem \ref{duality-th-intro} stated in the introduction, which gives, for a fixed linear form $\Phi\in  \C[X_0,X_1]^*_{l}$, a precise interpretation of the associated linear map
$$
\C[X_0,X_1]_d\ni Q\mapsto \trp{m}_Q(\Phi)\in \C[X_0,X_1]^*_{l-d}
$$
whose cokernel $C_\Phi^d\edf \mathrm{coker} (Q\mapsto \trp{m}_Q(\Phi))$ is the space 
we are interested in. We give below the proof of this theorem:
%
%

\begin{proof} (of the duality Theorem \ref{duality-th-intro}):  We have to prove that for any $P(\delta_0, \delta_1)\in \C[\delta_0,\delta_1]_l$ we have equality 
\begin{equation}
\trp{m}_Q\big(   D_l(P(\delta_0,\delta_1))\big)=D_{l-d}\bigg(Q\left(\frac{\partial}{\partial \delta_0}, \frac{\partial}{\partial \delta_1}\right)(P(\delta_0, \delta_1)) \bigg)
\end {equation}
in $\C[X_0,X_1]_{l-d}^*$. It suffices to check the equality for monomials. Choose  $P(\delta_0,\delta_1)=\delta_0^{s_0}\delta_1^{s_1}$,  $Q(X_0,X_1)=X_0^{k_0}X_1^{k_1}$ with $s_0+s_1=l$, $k_0+k_1=d$. For any monomial $X_0^{a_0} X_1^{a_1}\in \C[X_0,X_1]_{l-d}$ we have

\begin{align}
\langle \trp{m}_Q\big(   D_l(P(\delta_0,&\delta_1))\big),X_0^{a_0} X_1^{a_1}\rangle=\langle D_l(P(\delta_0,\delta_1)),X_0^{a_0+k_0}X_1^{a_1+k_1}\rangle =\nonumber\\
&=\left\{
\begin{array}{ccc}
(a_0+k_0)!(a_1+k_1)! & \rm if & (a_0+k_0,a_1+k_1)=(s_0,s_1)\phantom{,}	\\
0  & \rm if & (a_0+k_0,a_1+k_1)\ne (s_0,s_1),
\end{array}
\right. \label{left-term}
\end{align}	

and

\begin{align}
\bigg \langle D_{l-d}\bigg(Q&\left(
\frac{\partial}{\partial \delta_0},  \frac{\partial}{\partial \delta_1}
\right)
(P(\delta_0, \delta_1)) \bigg),X_0^{a_0} X_1^{a_1}\bigg \rangle=\bigg\langle 
D_{l-d}\bigg(\frac{\partial^d(\delta_0^{s_0}\delta_1^{s_1})}
{\partial \delta_0^{k_0}\partial\delta_1^{k_1}} \bigg),X_0^{a_0} X_1^{a_1}\bigg \rangle \nonumber\\&=
\frac{s_0!}{(s_0-k_0)!}\frac{s_1!}{(s_1-k_1)!}\frac{}{}\big\langle 
D_{l-d} ( \delta_0^{s_0-k_0}\delta_1^{s_1-k_1} ),X_0^{a_0} X_1^{a_1}\big \rangle\nonumber\\ 
\label{right-term}
&=\left\{
\begin{array}{ccc}
s_0! s_1! &\rm if &(s_0-k_0,s_1-k_1)=(a_0,a_1)\phantom{.}\\
0&\rm if &(s_0-k_0,s_1-k_1)\ne (a_0,a_1)	.
\end{array}\right.
\end{align}

The claim follows after comparing (\ref{left-term}) with (\ref{right-term}).
\end{proof}

We will make use of the duality theorem Theorem \ref{duality-th-intro} is the following way: we will replace the variables $\delta_0$, $\delta_1$ in $P(\delta_0, \delta_1) $ by the standard variables $X_0$, $X_1$ used in the theory of polynomials and we will replace the differential operators $\frac{\partial}{\partial \delta_0}, \frac{\partial}{\partial \delta_1}$ intervening in  $Q\left(\frac{\partial}{\partial \delta_0}, \frac{\partial}{\partial \delta_1}\right)$ by the partial differentials  $\frac{\partial}{\partial X_0}$, $\frac{\partial}{\partial X_1}$.  In this way we will be able to make use of the theory of apolar ideals explained in section \ref{apolar-section}.


Let $\Phi \in \C[X_0,X_1]_l^*\setminus\{0\}$ be a non-trivial linear form and  let $P\in \C[X_0,X_1]_l\setminus\{0\}$ be the polynomial associated with $\Phi$   using the identifications explained above. The cokernel $C_\Phi^d$ in which are interested is obviously identified with
$$
C_P^d=\coker\big (\C[\xi_0,\xi_1]_d\ni Q\mapsto  Q\bullet P\in \C[X_0,X_1]_{l-d}\big).
$$

As explained above, the goal of this section is the computation of the dimensions $\dim(C_P^d)$ as $[P]$ varies in the projective space $\P(\C[X_0,X_1]_{l})$.  We will answer this problem in two ways:

\begin{itemize}
\item We will give an explicit formula for the value of  $\dim(C_P^d)$ on each stratum $P^l_r$ of the cactus rank stratification of 	$\P(\C[X_0,X_1]_{l})$. 

\item We will express the level sets $P^l_{d,s}$ ($0\leq s\leq  l-d+1$) of the map 
$$\P(\C[X_0,X_1]_{l})\ni [P] \mapsto \dim(C_P^d)\in\N $$
in terms of the strata of the  cactus rank stratification of $\P(\C[X_0,X_1]_{l})$ (see section \ref{apolar-section}).
\end{itemize}
\vspace{2mm}

The definition of $C_P^d$ gives:
\begin{re}
Under the assumptions and with the notations above we have 	
$$
\dim(C_P^d)=l-2d+\dim(\Ann(P)_d) 
$$
for any $0\leq d\leq l$. In particular $\dim(C_P^d)\geq l-2d$.
\end{re}

 By Theorem \ref{SlvThr} explained in the Appendix, the apolar ideal $\Ann(P)_d$ is generated by two relative prime  homogeneous polynomials $G_1$, $G_2$ satisfying $\deg(G_1)\leqslant \deg(G_2)$, $\deg(G_1)+ \deg(G_2)=l+2$.
 
The dimension $\dim(\Ann(P)_d)$ can be easily computed in terms of $d_i(P)\edf \deg(G_i)$, see   Remark \ref{dim(Ann(P))Rem} proved in the Appendix. We obtain:

\begin{pr}\label{dim-prop} Let  $P\in\C[X_0,X_1]_l\setminus\{0\}$ and $0\leq d\leq l$.  

\begin{enumerate} 
\item We have 
\begin{equation}\label{dim-prop-formula}
\dim(C_P^d)= \left\{
    \begin{array}{ll}
        0 & \mbox{if } l+2-\crk(P)\leqslant d \leqslant l, \\
        l-d+1-\crk(P) & \mbox{if } \crk(P) \leqslant d < l+2-\crk(P), \\
        l-2d & \mbox{if } d < \crk(P).
    \end{array}
\right.
 \end{equation}

 \item The dimension $s:=\dim(C^d_P)$ depends only on $d$ and the cactus rank $r=\crk(P)$:

\begin{enumerate}
\item For $0\leq d\leq [\frac{l}{2}]$, the function $s=s(r)$ is given by:
\vspace{2mm}
\\
\begin{overpic}[width=0.7\textwidth]{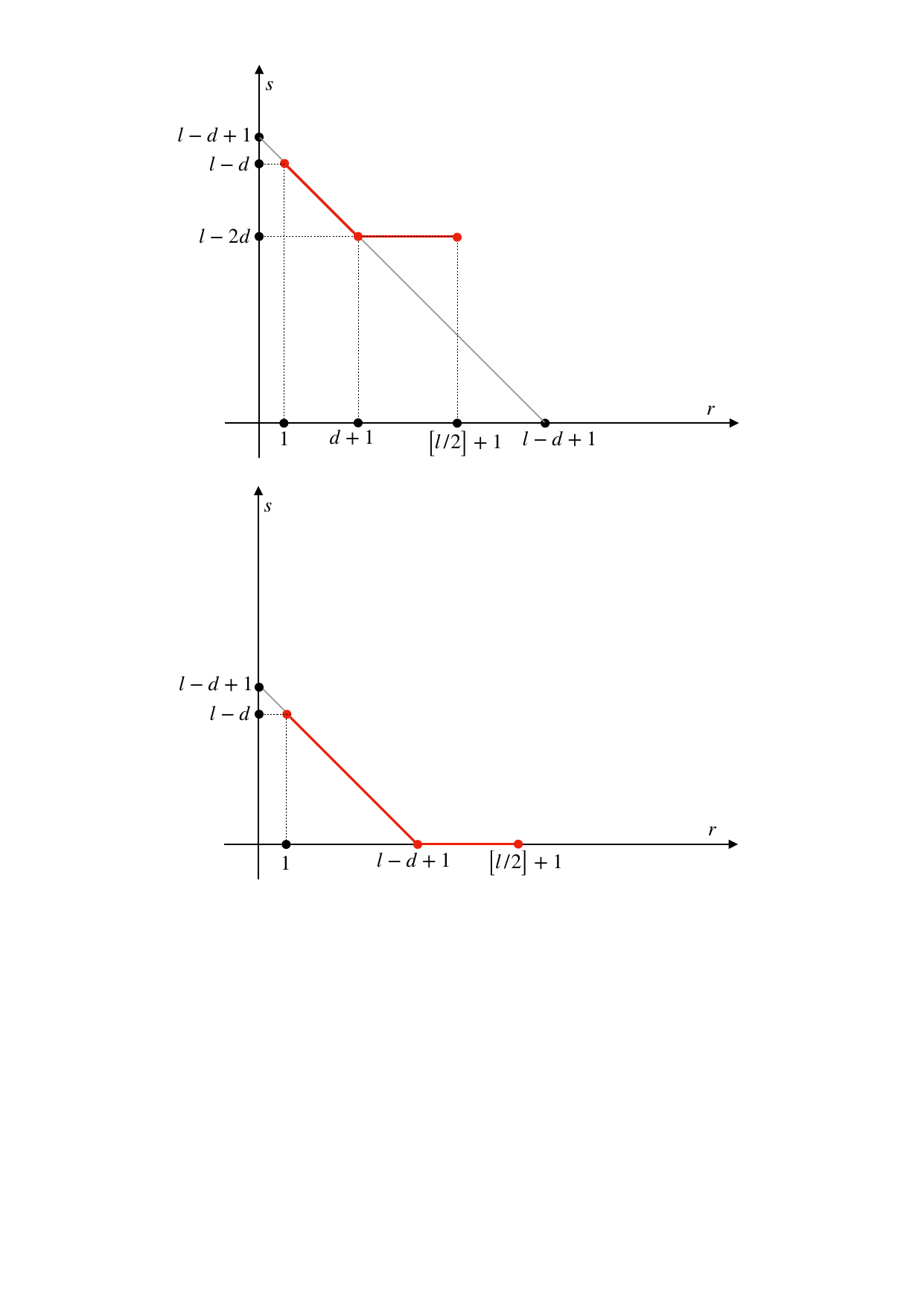}
 \put (30,60) {$s(r)=\left\{
\begin{array}{lcl}
l-d-(r-1)&\rm if & 1\leq r\leq d+1,\\
l-2d&\rm if &d+1\leq r\leq [\frac{l}{2}]+1.
\end{array}\right.$}
 \end{overpic}

\item For $[\frac{l}{2}]\leq d\leq l$, the function $s=s(r)$ is given by: 
\vspace{2mm}
\\
\begin{overpic}[width=0.7\textwidth]{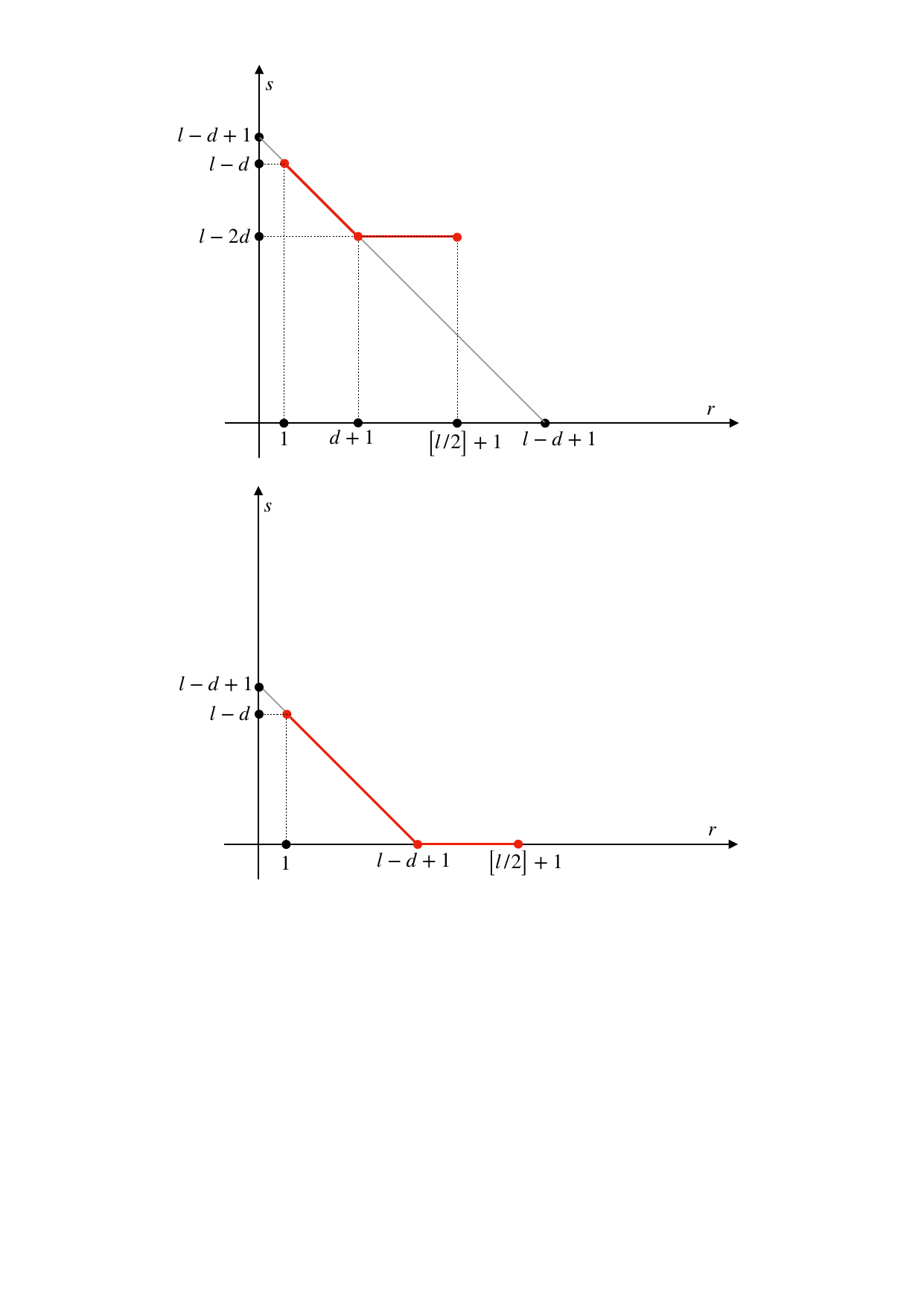}
 \put (30,60) {$s(r)=\left\{
\begin{array}{lcl}
l-d-(r-1)&\rm if & 1\leq r\leq l-d+1,\\
0&\rm if &l-d+1\leq r\leq [\frac{l}{2}]+1.
\end{array}\right.$}	
\end{overpic}

%
\end{enumerate}
\end{enumerate}
 \end{pr}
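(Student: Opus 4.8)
The plan is to deduce the whole proposition from the single dimension identity $\dim(C_P^d)=l-2d+\dim(\Ann(P)_d)$ recorded in the Remark preceding the statement, so that everything reduces to computing the Hilbert function $d\mapsto \dim(\Ann(P)_d)$ of the apolar ideal and then doing careful bookkeeping. First I would invoke Theorem \ref{SlvThr}: the apolar ideal $\Ann(P)$ is a complete intersection, generated by two relatively prime homogeneous forms $G_1$, $G_2$ with $d_1\edf\deg(G_1)\le d_2\edf\deg(G_2)$ and $d_1+d_2=l+2$. By Proposition \ref{d1=cactus} the smaller degree is the cactus rank, $d_1=\crk(P)\eqcolon r$, whence $d_2=l+2-r$; the inequality $d_1\le d_2$ forces $r\le[\frac{l}{2}]+1$, which is exactly the admissible range of the variable $r$ in part (2).

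Next I would establish the Hilbert function formula (the content of Remark \ref{dim(Ann(P))Rem}). In $R=\C[X_0,X_1]$ the evaluation $(A,B)\mapsto G_1A+G_2B$ maps $R_{d-d_1}\oplus R_{d-d_2}$ onto $\Ann(P)_d$, and coprimality of $G_1$, $G_2$ shows its kernel is isomorphic to $R_{d-d_1-d_2}$. Since $d-d_1-d_2=d-(l+2)<0$ for $0\le d\le l$, this kernel vanishes and
$$
\dim(\Ann(P)_d)=\max(0,\,d-d_1+1)+\max(0,\,d-d_2+1),\qquad 0\le d\le l.
$$
Substituting $d_1=r$, $d_2=l+2-r$ and splitting into the three regimes $d<r$, $r\le d<l+2-r$, and $d\ge l+2-r$ gives $\dim(\Ann(P)_d)=0$, $d-r+1$, and $2d-l$ respectively. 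Adding $l-2d$ yields $l-2d$, $l-d+1-\crk(P)$, and $0$, which is precisely formula (\ref{dim-prop-formula}) of part (1).

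For part (2) I would reorganise the formula of (1), now reading $s=\dim(C^d_P)$ as a function of $r$ for $d$ fixed, and treat the two overlapping ranges $0\le d\le[\frac{l}{2}]$ and $[\frac{l}{2}]\le d\le l$ separately. The point to check in the first range is that the third regime $d\ge l+2-r$ is vacuous (for $r\le[\frac{l}{2}]+1$ one has $l+2-r\ge[\frac{l}{2}]+1>d$), so only the pieces $s=l-d-(r-1)$ (for $r\le d$) and $s=l-2d$ (for $r>d$) occur, and these glue at $r=d+1$; in the second range the relevant breakpoint is $r=l-d+1$, where the piece $l-d-(r-1)$ already equals $0$, matching the identically-zero third regime. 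The main obstacle is purely this bookkeeping: one must verify the gluing at the endpoints $r=d+1$ and $r=l-d+1$, confirm that the two formulae agree on the overlap $d=[\frac{l}{2}]$, and handle the parity of $l$ at the extreme value $r=[\frac{l}{2}]+1$; each of these is a short case check once the complete-intersection structure and the identity $d_1=\crk(P)$ are in hand.
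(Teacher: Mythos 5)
Your proposal is correct and follows essentially the same route as the paper: the paper likewise obtains formula (\ref{dim-prop-formula}) by combining the identity $\dim(C_P^d)=l-2d+\dim(\Ann(P)_d)$ with Sylvester's structure theorem, the identification $d_1(P)=\crk(P)$ of Proposition \ref{d1=cactus}, and Remark \ref{dim(Ann(P))Rem}, whose appendix proof uses exactly your map $(\Pi_1,\Pi_2)\mapsto \Pi_1G_1+\Pi_2G_2$ (your Koszul-kernel vanishing $d-(l+2)<0$ being a mildly more uniform packaging of the paper's injectivity argument), after which part (2) is the same case-by-case bookkeeping in $r$ that you describe.
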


We will now give explicit formulae for the level sets of the map $[P]\mapsto \dim(C_P^d)$ in terms of the strata of the cactus rank stratification of $\P(\C[X_0,X_1]_l)$:

For $1\leq r\leq \big[\frac{l+2}{2}]$ consider the locally closed partition $(P^l_r)_{1\leq r\leq [\frac{l+2}{2}] }$  of the projective space  $\P(\C[X_0,X_1]_l)$, where
$$
P^l_r\edf \{[P]\in\P(\C[X_0,X_1]_l)|\ \crk(P)=r\},
$$
(see Remark \ref{partition-rem} in the Appendix). For $0\leq d\leq l$ and $0\leq s\leq  l-d+1$  define
$$P^l_{d,s}\edf \{[P]\in\P(\C[X_0,X_1]_l)|\ \dim(C_P^d)=s\}$$

Using Proposition \ref{dim-prop}, we obtain

\begin{pr} Let $d$, $s\in\N$ with $0\leq d\leq l$ and $0\leq s\leq l-d+1$. We have:    
		\begin{equation}\label{Plds}
\begin{split}
P^l_{d,s}=&\left\{\begin{array}{lcl}
\union_{r>d} P^l_r &\rm if & s=l-2d,\\  
P^l_{l-d+1-s} &\rm if &s>l-2d\hbox{ and } s> 0,	\\  
\bigcup_{r\geq l-d+1} P^l_r&\rm if &s>l-2d\hbox{ and } s= 0. 	
\end{array}
\right.\
\end{split} 
\end{equation}
\end{pr}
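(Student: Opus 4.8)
The plan is to \emph{invert} the dependence of $\dim(C_P^d)$ on $\crk(P)$ made explicit in Proposition~\ref{dim-prop}. By Proposition~\ref{dim-prop}(2) the integer $\dim(C_P^d)$ depends on $[P]$ only through $r=\crk(P)$; write $s(r)$ for this common value (with $d$ fixed). Since the family $(P^l_r)_{1\le r\le [\frac{l+2}{2}]}$ is a partition of $\P(\C[X_0,X_1]_l)$ (Remark~\ref{partition-rem}), each level set decomposes as
$$
P^l_{d,s}=\bigcup_{r\,:\,s(r)=s} P^l_r ,
$$
so the whole proposition reduces to computing, for each admissible $s$, the fibre $\{r : s(r)=s\}$ of the function $s(\cdot)$ directly from the formula of Proposition~\ref{dim-prop}(1).

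First I would record the shape of $r\mapsto s(r)$. Reading off Proposition~\ref{dim-prop}(1), $s(\cdot)$ is weakly decreasing in $r$: on the middle branch it equals $l-d+1-r$, hence decreases with slope $-1$ and assumes each of its values there exactly once; beyond that range it is constant, equal to its minimal value $s_{\min}:=\max(l-2d,0)$, which is $l-2d$ for $d\le[\frac{l}{2}]$ and $0$ for $d>[\frac{l}{2}]$. It follows that the fibres of $s(\cdot)$ come in exactly two kinds: for a value $s>s_{\min}$ in the image the fibre is the single index $r=l-d+1-s$, while for $s=s_{\min}$ it is the whole terminal block of indices on which $s(\cdot)$ stays constant, up to the maximal cactus rank $[\frac{l+2}{2}]$.

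Matching these two kinds of fibres with the three cases of~(\ref{Plds}) is then a matter of comparing $s$ with $l-2d$ and with $0$. If $s>l-2d$ and $s>0$ then $s>s_{\min}$, so the fibre is the singleton $r=l-d+1-s$ and $P^l_{d,s}=P^l_{l-d+1-s}$; this also yields the correct empty answer at the top value $s=l-d+1$, since then $l-d+1-s=0$ and $P^l_0=\varnothing$ (the cactus rank being always $\ge 1$). If $s=l-2d$ (forcing $l-2d\ge0$, i.e. $d\le[\frac{l}{2}]$ and $s_{\min}=l-2d$), the terminal block is $\{r:r>d\}$, coming from the third branch $d<\crk(P)$, whence $P^l_{d,s}=\bigcup_{r>d}P^l_r$. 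If instead $s=0$ while $s>l-2d$ (so $d>[\frac{l}{2}]$ and $s_{\min}=0$), the terminal block is $\{r:r\ge l-d+1\}$ and $P^l_{d,s}=\bigcup_{r\ge l-d+1}P^l_r$.

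The step requiring genuine care, and the only realistic source of error, is the determination of the left endpoint of the terminal block in the two minimal-value regimes, because it arises from different branches. For $s_{\min}=l-2d$ the middle branch attains only values $\ge l-2d+1$ (its smallest, at $r=d$), and the constant value $l-2d$ first occurs at $r=d+1$, giving the block $\{r>d\}$. For $s_{\min}=0$, by contrast, the middle branch itself already reaches $0$ at its last admissible index $r=l-d+1$, after which the branch $s=0$ persists, giving the block $\{r\ge l-d+1\}$; here one must check that $r=l-d+1$ really satisfies the defining inequality $\crk(P)\le d$ of the middle branch, which holds precisely because $d>[\frac{l}{2}]$. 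Carrying out this off-by-one verification directly from the inequalities $\crk(P)\le d<l+2-\crk(P)$ and $d<\crk(P)$ in Proposition~\ref{dim-prop}(1) completes the proof of~(\ref{Plds}).
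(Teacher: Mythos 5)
Your proof is correct and takes essentially the same route as the paper's: both invert the dimension formula of Proposition \ref{dim-prop} by elementary case analysis on where $s$ sits relative to $l-2d$ and $0$, and your ``off-by-one'' verification of the left endpoint of the terminal constant block is precisely the content of the paper's contradiction argument in the case $s=l-2d$ and its cases (a)/(b) when $s=0$. The only difference is organizational --- you read the fibres off the monotone graph of $r\mapsto s(r)$ (i.e.\ part (2) of Proposition \ref{dim-prop}) instead of solving the branch inequalities of part (1) directly, and you add the harmless explicit observation that $s=l-d+1$ gives the empty set since $\crk(P)\geq 1$.
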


\begin{proof}

1) The case   $s=l-2d$.  The inclusion $P^l_{d,l-2d}\supset \union_{r>d} P^l_r$ is obvious by the third formula in (\ref{dim-prop-formula}).  For the opposite inclusion, let $[P]\in P^l_{d,s}$ with $s=l-2d$; suppose that $r=\crk(P)\leq d$.  It follows $l+2-\crk(P)-d=d+s+2-\crk(P)>0$, so the two inequalities defining the second case in  (\ref{dim-prop-formula}), so $l-2d=l-d+1-\crk(P)$, so $\crk(P)=d+1$, which contradicts the assumption $\crk(P)\leq d$.
\vspace{2mm}\\
2) The case   $s>l-2d$ and $s>0$. 

In this case we use the second formula in (\ref{dim-prop-formula}) and we see that we must have
$$
s= l-d+1-\crk(P) \hbox{ with } \crk(P) \leqslant d < l+2-\crk(P).
$$
We obtain $\crk(P)=l-d+1-s$ and the condition
$$
l-d+1-s\leq d< l+2-(l-d+1-s)=d+1+s
$$
is obviously satisfied. 
\vspace{2mm}\\
3) The case $s>l-2d$ and $s= 0$.

In this case $l-2d<0$, so the third formula (\ref{dim-prop-formula}) cannot yield $0$. Taking into account the first two formulae in (\ref{dim-prop-formula})  we obtain the result $s=0$ when 

\begin{enumerate}
	\item[(a)] $l+2-\crk(P)\leqslant d \leqslant l$, or
	\item[(b)] $l-d+1-\crk(P)=0$ and $\crk(P) \leqslant d < l+2-\crk(P)$
\end{enumerate}

The case (a) gives $\crk(P)\geq l+2-d$ and the condition $d\leq l$ is satisfied.  The case (b) gives $\crk(P)=l-d+1$ and the condition $\crk(P) \leqslant d < l+2-\crk(P)$ is satisfied. Therefore cases (a), (b) give  the solutions $\crk(P)\geq l-d+1$.   
\end{proof}

\begin{co}\label{stratif-comparison}

	We have
	
	\begin{enumerate}
\item For $0\leq d\leq [\frac{l}{2}]$, we have
$$
P^l_{d,s}=\left\{
\begin{array}{ccc}
P^l_{l-d+1-s}&\rm if & l-2d< s\leq l-d,\\
\union_{d+1\leq r\leq [\frac{l}{2}]+1} P^l_r &\rm if & s=l-2d,\\
\emptyset  &\rm if & s< l-2d \hbox{ or } s> l-d.
\end{array}\right.	
$$
\item For $[\frac{l}{2}]\leq d\leq l$, we have
$$
P^l_{d,s}=\left\{
\begin{array}{ccc}
P^l_{l-d+1-s}&\rm if & 0< s\leq l-d,\\
\union_{l-d+1\leq r\leq [\frac{l}{2}]+1} P^l_r &\rm if & s=0,\\
\emptyset  &\rm if &    s> l-d.
\end{array}\right.	
$$
\end{enumerate}
 
\end{co}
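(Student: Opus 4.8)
The plan is to deduce Corollary \ref{stratif-comparison} directly from Proposition \ref{dim-prop} by inverting the dimension function $s=s(r)$, using only two structural facts already available: that the cactus-rank strata $(P^l_r)$ partition $\P(\C[X_0,X_1]_l)$ with admissible index range $1\leq r\leq [\frac{l+2}{2}]=[\frac{l}{2}]+1$ (Remark \ref{partition-rem}), so that $P^l_r=\emptyset$ whenever $r<1$ or $r>[\frac{l}{2}]+1$; and the bound $\dim(C_P^d)\geq l-2d$ established just above. Since $\dim(C_P^d)$ depends on $P$ only through $r=\crk(P)$ and the strata partition the projective space, for every $s$ one has
$$P^l_{d,s}=\union_{r\,:\,s(r)=s} P^l_r,$$
so the whole statement reduces to computing, within the range $1\leq r\leq[\frac{l}{2}]+1$, the level sets of the explicit piecewise-linear function $s=s(r)$ of Proposition \ref{dim-prop}(2).

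First I would treat case (1), $0\leq d\leq [\frac{l}{2}]$. Here $s(r)=l-d+1-r$ is strictly decreasing with slope $-1$ on $1\leq r\leq d+1$, dropping from $l-d$ down to $l-2d$, and is then constant equal to $l-2d$ on $d+1\leq r\leq [\frac{l}{2}]+1$. Inverting the strictly decreasing piece, each value $s$ with $l-2d<s\leq l-d$ is attained by the single index $r=l-d+1-s$, giving $P^l_{d,s}=P^l_{l-d+1-s}$; the constant value $s=l-2d$ is attained exactly on the tail block $d+1\leq r\leq [\frac{l}{2}]+1$, giving $P^l_{d,s}=\union_{d+1\leq r\leq[\frac{l}{2}]+1}P^l_r$; and every other $s$ (either $s<l-2d$, where the inversion produces $r>d+1$ already lying on the constant tail, so no index occurs, or $s>l-d$, where $l-d+1-s<1$ falls outside the admissible range) yields $\emptyset$. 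This is exactly case (1).

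Then I would treat case (2), $[\frac{l}{2}]\leq d\leq l$, by the same inversion applied to the second formula of Proposition \ref{dim-prop}(2): $s(r)=l-d+1-r$ strictly decreasing on $1\leq r\leq l-d+1$ from $l-d$ to $0$, then constant equal to $0$ on $l-d+1\leq r\leq[\frac{l}{2}]+1$. The decreasing piece gives $P^l_{d,s}=P^l_{l-d+1-s}$ for $0<s\leq l-d$, the constant tail gives $P^l_{d,0}=\union_{l-d+1\leq r\leq[\frac{l}{2}]+1}P^l_r$, and $s>l-d$ forces $l-d+1-s<1$, hence $\emptyset$.

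The main obstacle, and the only point needing genuine care, is the transition index where the decreasing piece meets the constant tail. One must verify that the two pieces of $s(r)$ agree there (they do, both giving $l-2d$ resp.\ $0$ at $r=d+1$ resp.\ $r=l-d+1$), so that the level set of the constant value is precisely the tail block, the transition index being counted once. This is sensitive to the parity of $l$ near $d=[\frac{l}{2}]$: for $l$ odd one has $l-2d=1$ at $d=[\frac{l}{2}]$, shifting the boundary value of $s$, and there the tail block of case (2) can degenerate, since $l-d+1$ may exceed $[\frac{l}{2}]+1$, making $\union_{l-d+1\leq r\leq[\frac{l}{2}]+1}P^l_r$ empty and hence $P^l_{d,0}=\emptyset$. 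I would resolve this by evaluating $s(r)$ at the transition index from both pieces, restricting throughout to the admissible range $1\leq r\leq[\frac{l}{2}]+1$, and checking that the formulae of cases (1) and (2) coincide at the shared endpoint $d=[\frac{l}{2}]$; this pins down every level set unambiguously and completes the proof.
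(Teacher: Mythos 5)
Your proposal is correct and takes essentially the same route as the paper: the paper obtains the corollary from Proposition \ref{dim-prop} via the preceding proposition computing $P^l_{d,s}$ by case analysis on $s$ relative to $l-2d$ and $0$, which is precisely your inversion of the piecewise-linear function $s=s(r)$ restricted to the admissible range $1\leq r\leq [\frac{l}{2}]+1$, repackaged. Your explicit verification at the transition index and of the degenerate tail for $l$ odd at $d=[\frac{l}{2}]$ (where $l-d+1>[\frac{l}{2}]+1$ forces $P^l_{d,0}=\emptyset$) is sound, and makes explicit a point the paper leaves to the empty-index-range convention.
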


\section{Appendix}

\subsection{The cokernel problem for linear spaces} \label{coker-appendix}

Let $X$ be a complex manifold. Recall \cite{Fi} that category of linear spaces over $X$ is anti-equivalent to the category of coherent sheaves over $X$ under the functor which associates to a linear space $L$ on $X$ the coherent sheaf $\mathscr{L}(L)$ of linear forms on $L$.

Using this equivalence one can define
\begin{dt}
Let $\varphi:L\to L'$ be a morphism of linear spaces over $X$ and let $\varphi^\smvee: \mathscr{L}(L')\to \mathscr{L}(L) $	be the associated morphism of coherent sheaves. The categorical cokernel of $\varphi$ is the linear space $\coker(\varphi)$ associated with the coherent sheaf $\ker(\varphi^\smvee)$.
\end{dt}

Note that $\coker(\varphi)$ comes with a natural epimorphism of linear spaces 
$$
c:L'\to \coker(\varphi)
$$ 
with $c\circ \varphi=0$  satisfying the following universal property: for any linear space morphism $\gamma:L'\to L''$ with $\gamma\circ \varphi=0$, there exists an unique $\tilde \gamma:\coker(\varphi)\to L''$ such that $\gamma=\tilde\gamma\circ c$.

\begin{re}
In general
\begin{enumerate}
\item An epimorphism of linear spaces is not necessarily  fibrewise surjective.  
\item The underlying total space of the cokernel $\coker(\varphi)$ of a morphism of linear spaces over $X$ does not coincide with the union $\cup_{x\in X} \coker(\varphi_x)$ of fibrewise cokernels of $\varphi$. 	
\end{enumerate}

\end{re}

\begin{ex}
Let $L$ be a line bundle over $X$ with $H^0(X,{\cal L})\ne 0$ and let $f: {\cal O}_X\to {\cal L}$ be a non-trivial section of $L$ (or, equivalently, of the associated invertible sheaf ${\cal L}$). Note that $\varphi$ defines a morphism of (locally trivial) linear spaces 
$$
 \varphi:X\times\C\to L
$$
over $X$. The morphism of coherent sheaves which corresponds to $  \varphi$ via the anti-equivalence $\mathscr{L}$  is $\trp{f}: {\cal L}^*\to {\cal O}_X^*={\cal O}_X$.

Denoting by $D$ the zero devisor of $\varphi$, we can identify this morphism with the canonical embedding ${\cal O}_X\hookrightarrow  {\cal O}_X(D)$, and, via this identification, $\trp{f}$ coincides with the canonical embedding ${\cal O}_X(-D)\hookrightarrow {\cal O}_X$. Therefore $\ker(\trp{f})=0$, so $\coker(\varphi)=0$. This shows that $\varphi$ is always an epimorphism of linear spaces. But, when $D\ne\emptyset$, $\varphi$ is not fibrewise surjective.

\end{ex}

This justifies the following

\begin{dt}\label{DefCoker}
Let $p:L\to X$,  $p':L'\to X$ linear spaces over $X$ and
$\varphi:L\to L'$ be a morphism of linear spaces over $X$. The topological cokernel of $\varphi$ is the topological quotient of $L'$ by the equivalence relation $R_\varphi$  given by
$$
y'\mathrel{R_\varphi} z'  \hbox{ if }p'(y')=p'(z) \hbox{ and }\exists y\in L \hbox { such that } z'-y'=\varphi(y).
$$ 
This quotient will be denote by $\Coker(\varphi)$.		
\end{dt}

In other words, $\Coker(\varphi)$ is the underlying space of the relative quotient stack $L'/L$ over $X$, where $L$ acts on $L'$ via $\varphi$ fibrewise over $X$. This relative quotient stack is called the {\it stacky cokernel} of $\varphi$  in \cite[section I.3]{Pl2}.

\begin{ex}\label{non-Hausdorff-Coker} Consider the line bundles $L=L'=\C\times \C$ over $\C$ and the morphism $\varphi:L\to L'$ defined by $\varphi(z,\zeta)=z\zeta$. The topological cokernel $\Coker(\varphi)$ is a ``line with an infinite family of origins". The  fibre  $\Coker(\varphi)_z$ over  a point $z\in\C^*$ is trivial, whereas $\Coker(\varphi)_0$ is a complex line. Any two points in $\Coker(\varphi)_0$ are non-separable by disjoint neighbourhoods, so $\Coker(\varphi)$ is non-Hausdorff. 
\end{ex}

\begin{ex}
Suppose that $L$, $L'$ are holomorphic vector bundles over $X$, let ${\cal L}$, ${\cal L}'$ be the associated locally free shaves of locally defined sections and let $f:{\cal L}\to {\cal L}'$ be the sheaf morphism  associated with $\varphi$. The underlying set of   $\Coker(\varphi)$ can be identified with the disjoint union
$$
\coprod_{x\in X} \coker(f)(x) 
$$ 
of the fibres 	$\coker(f)(x)=\coker(f)_{x}/\mg_x\coker(f)_{x}$ of the coherent sheaf $\coker(f)$.  Taking into account a classical semi-continuity theorem (see \cite[section 4.4]{GrRe}) it follows that the map 
$$X\ni x\mapsto \dim(\coker(f)(x))=\dim(\Coker(\varphi)_x) $$
is upper semicontinuous with respect to the Zariski topology. 
\end{ex}

%
%
	
%

 \subsection{Apolar ideals and the Sylvester theorem. Cactus rank and the associated stratification}\label{apolar-section}
 
 \subsubsection{General Theory}

  Consider the apolar action 
$$\bullet  : \C[\xi_0,\xi_1] \times \C[X_0,X_1] \to \C[X_0,X_1],\ (Q,P)\mapsto Q \bullet P \edf Q(\frac{\partial}{\partial X_0},\frac{\partial}{\partial X_1})(P(X_0,X_1)).$$

For a fixed homogeneous polynomial $P\in \C[X_0,X_1]$ denote by $\Ann(P)$ the \textit{apolar ideal} of $P$ defined as below: 
$$
\Ann(P)\edf \{Q\in \C[\xi_0,\xi_1]| \ Q\bullet P=0\ \}.
$$

\begin{ex} \label{xmym} We have
	$$\Ann(X_0^m X_1^m (X_0+X_1))=(\xi_0^{m+1}-\sum_{k=1}^m (-1)^k \xi_0^{m-k}\xi_1^k, \xi_1^{m+2}).$$
This formula is obtained by prooving that the remainder of the Euclidian division of a polynomial $g\in \Ann(X_0^m X_1^m (X_0+X_1))$ by $\xi_1^{m+2}$ is divisible by $\xi_0^{m+1}-\sum_{k=1}^m (-1)^k \xi_0^{m-k}\xi_1^k$.
\end{ex}
We denote by $\sim$  the   equivalence relation on $\C[X_0,X_1]_l$ induced by the right $\GL(2,\C)$-action induced by composition with linear transformations. %
One has the well-known result  (see \cite[Lemma 2.2]{DS2}):

\begin{lm}\label{Lemma-Dimca}

\begin{enumerate}
	\item $\Ann(P)_0 \neq 0$ if and only if $P=0$. 
	\item $\Ann(P)_0=0$ and $\Ann(P)_1\neq 0$ if and only if $P\sim X_0^l$. 
	\item $\Ann(P)_1 =0$ and $\Ann(P)_2=\C \lambda^2$ for a linear form $\lambda$ if and only if $P \sim X_0^{l-1}X_1$. 
	 
\end{enumerate}	
\end{lm}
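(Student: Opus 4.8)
The three assertions all concern $\GL(2,\C)$-invariant properties of $P$, so my plan is to reduce every ``iff'' to a computation on the displayed normal form. The structural input I would isolate first is the \emph{equivariance of the apolar construction}: for $g\in\GL(2,\C)$ and $P^g\edf P\circ g$, the chain rule shows that $Q(\partial/\partial X_0,\partial/\partial X_1)(P^g)$ equals $(\tilde Q(\partial/\partial X_0,\partial/\partial X_1)P)\circ g$, where $\tilde Q$ is obtained from $Q$ by the invertible, degree-preserving contragredient substitution on the dual variables $\xi_0,\xi_1$. Hence $Q\mapsto\tilde Q$ carries $\Ann(P^g)$ isomorphically onto $\Ann(P)$ degree by degree. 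Being a linear substitution on $\C[\xi_0,\xi_1]$, it preserves each $\dim\Ann(P)_k$ and sends a line spanned by a perfect square $\lambda^2$ to a line spanned by a perfect square. Therefore all conditions appearing on the left-hand sides are constant along $\sim$-classes, and each equivalence reduces to (i) checking the displayed representative, and (ii) showing the conditions force $P$ into the corresponding normal form. Part (1) is then immediate, since $\Ann(P)_0$ is the set of scalars $c$ with $cP=0$, which is nonzero precisely when $P=0$.

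For (2), assume $\Ann(P)_0=0$ (so $P\ne0$ by (1)) and choose $0\ne Q=a\xi_0+b\xi_1\in\Ann(P)_1$. Since $\GL(2,\C)$ acts transitively on the nonzero linear forms in $\C[\xi_0,\xi_1]_1$, after replacing $P$ by a transform (which changes neither the hypotheses nor the conclusion) I may take $Q=\xi_1$, i.e. $\partial P/\partial X_1=0$. As $P$ is homogeneous of degree $l$, this forces $P=cX_0^l$ with $c\ne0$, so $P\sim X_0^l$. Conversely $X_0^l$ visibly satisfies $\Ann(\cdot)_0=0$ and $\xi_1\in\Ann(\cdot)_1$, and by the invariance above so does every member of its class.

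Part (3) is the substantive case; I take $l\ge3$ (for $l=2$ one checks directly that $X_0X_1$ has $\Ann(\cdot)_2$ two-dimensional, so the characterisation is understood for $l\ge3$). Assuming $\Ann(P)_1=0$ and $\Ann(P)_2=\C\lambda^2$, normalise $\lambda$ to $\xi_1$ as above; then $\xi_1^2\in\Ann(P)$, i.e. $\partial^2 P/\partial X_1^2=0$, which together with homogeneity gives $P=X_0^{l-1}(bX_0+aX_1)$. If $a=0$ then $P\sim X_0^l$, which by (2) has $\Ann(\cdot)_1\ne0$, contradicting the hypothesis; hence $a\ne0$, and the linear change $X_1\mapsto bX_0+aX_1$ brings $P$ to $X_0^{l-1}X_1$, so $P\sim X_0^{l-1}X_1$. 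For the converse I would compute directly on the representative (legitimate by invariance): for $l\ge3$ one finds $\Ann(X_0^{l-1}X_1)_1=0$ and $\Ann(X_0^{l-1}X_1)_2=\C\xi_1^2=\C\lambda^2$.

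The main obstacle is the normalisation step in (3): the hypothesis must be used in the precise form that $\Ann(P)_2$ is \emph{one-dimensional and spanned by a perfect square}, for it is exactly this that produces $\xi_1^2\in\Ann(P)$ and hence $\partial^2P/\partial X_1^2=0$. Had $\Ann(P)_2$ been spanned instead by a product of distinct linear forms --- the generic cactus-rank-$2$ situation --- the same argument would yield a form $\sim X_0^aX_1^{l-a}$ or a sum of two $l$-th powers of distinct linear forms, not $X_0^{l-1}X_1$. If one prefers to keep this bookkeeping transparent, one can run the argument through Theorem \ref{SlvThr}: $\Ann(P)_1=0$ forces the smaller generator degree $d_1\ge2$, and $\Ann(P)_2\ne0$ then pins down $d_1=2$ with $\Ann(P)_2=\C g_1$; the perfect-square hypothesis says $g_1=\lambda^2$, from which the normal form follows as above.
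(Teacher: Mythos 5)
Your proof is correct; note, though, that the paper contains no proof of this statement to compare yours with --- the lemma is quoted verbatim from \cite[Lemma 2.2]{DS2} (hence its label), so your argument must stand on its own, and it does. The reduction via $\GL(2,\C)$-equivariance is right: the chain rule gives $Q\bullet(P\circ g)=\big(\tilde Q\bullet P\big)\circ g$ with $\tilde Q=Q\circ\trp{g}$, an invertible degree-preserving substitution on $\C[\xi_0,\xi_1]$ taking squares of linear forms to squares of linear forms, so all hypotheses and conclusions are $\sim$-invariant and it is legitimate to normalise $\lambda$ to $\xi_1$ and compute on representatives. The computations themselves are sound: in (2), $\partial P/\partial X_1=0$ plus homogeneity forces $P=cX_0^l$; in (3), $\xi_1^2\bullet P=0$ gives $P=X_0^{l-1}(aX_0+bX_1)$, and the hypothesis $\Ann(P)_1=0$ rules out $b=0$ via (2); the converse verifications on $X_0^l$ and $X_0^{l-1}X_1$ are correct for $l\geq 3$. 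You are also right to flag the degree restriction in (3): for $l=2$ the map $\C[\xi_0,\xi_1]_2\to\C$, $Q\mapsto Q\bullet P$, is surjective for every $P\neq 0$, so $\Ann(P)_2$ is always two-dimensional (e.g. $\Ann(X_0X_1)_2=\sp\{\xi_0^2,\xi_1^2\}$) and the statement as printed fails; that hypothesis is implicit in the source and in the lemma's intended use here. Your closing remark correctly isolates the one place where ``spanned by a perfect square'' does real work, and your alternative route through Theorem \ref{SlvThr} is consistent with the paper's framework: $\Ann(P)_1=0$ forces $d_1(P)\geq 2$, and $\Ann(P)_2\neq 0$ then pins down $d_1(P)=2$ with $d_2(P)=l\geq 3>2$, so $\Ann(P)_2=\C G_1$ is indeed one-dimensional.
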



Recall that the \textit{Waring rank} $\rk(P)$ of a binary form  $P\in \C[X_0,X_1]_l\setminus\{0\}$ is the smallest $r\in\N$ such that there exists linear forms $\lambda_1,\dots,\lambda_r$  with 
$$P=\lambda_1^l + \dots + \lambda_r^l. $$
Such a decomposition always exists because $\C[X_0,X_1]_l$ is generated by the powers $\lambda^l$ of linear forms on $\C^2$, see \cite[Theorem 4.2]{Re}, which also establishes the inequality $\rk(P)\leq  l$.

\begin{dt}\label{CactusRkDef} Let $P$ be a non-trivial binary form. The cactus rank $\crk(P)$ of $P$ is the minimal length $l(Z)$ of a $0$-dimensional subscheme $Z$ in $\P^1_\C$ defined by a homogeneous saturated ideal $I_Z$ contained in $\Ann(P)$. 
	
\end{dt} 

  Note that the Waring and the cactus ranks are obviously invariant under the natural $\GL(2,\C)$-action on $\C[X_0,X_1]_l$.

We have the following important \textit{structure theorem} due to Sylvester, see  \cite[Theorem 1.44, Claim p. 31]{IK},  \cite[Theorem 2.3]{DS2}: 

\begin{thr}\label{SlvThr} Let $P\in \C[X_0,X_1]_l\setminus\{0\}$.
	The ideal $\Ann(P)$ is generated by two homogeneous polynomials $G_1, G_2$ with no common zeros satisfying $\deg(G_1)+\deg(G_2)=l+2$ and $\deg(G_1) \leqslant \deg(G_2)$. 
	
	Moreover the Waring rank $\rk(P)$ is equal to $\deg(G_1)$ if $G_1$ has no multiple roots, and it is equal to $\deg(G_2)$ otherwise. 

\end{thr}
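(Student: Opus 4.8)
The plan is to recognise $A_P\edf\C[\xi_0,\xi_1]/\Ann(P)$ as a graded Artinian Gorenstein algebra and then to exploit the fact that, in two variables, such algebras are complete intersections. First I would note that $\Ann(P)$ is $\mg$-primary: applying a constant-coefficient operator of degree $d>l$ to the degree-$l$ form $P$ gives $0$, so $\Ann(P)_d=\C[\xi_0,\xi_1]_d$ for all $d>l$ and $A_P$ is finite-dimensional, concentrated in degrees $0,\dots,l$. The apolarity pairing $\C[\xi_0,\xi_1]_d\times\C[X_0,X_1]_l\to\C[X_0,X_1]_{l-d}$ is exactly Macaulay's inverse-system pairing, so by Macaulay duality $A_P$ is Gorenstein with one-dimensional socle sitting in top degree $l$; equivalently its Hilbert function is symmetric, $\dim(A_P)_d=\dim(A_P)_{l-d}$. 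Concretely this symmetry is the statement that the catalecticant (Hankel) matrices representing $Q\mapsto Q\bullet P$ in degrees $d$ and $l-d$ are transposes of one another.

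Next I would invoke the structure of codimension-$2$ Gorenstein ideals. Since $\C[\xi_0,\xi_1]$ is a two-dimensional regular ring and $\Ann(P)$ has grade $2$, Hilbert--Burch gives a minimal free resolution of $A_P$ of length $2$; the Gorenstein (self-duality) condition forces this resolution to be self-dual, hence forces exactly two minimal generators. Thus $\Ann(P)=(G_1,G_2)$ with $d_1\edf\deg G_1\le\deg G_2\edf d_2$, and two forms cutting out an Artinian quotient in two variables automatically form a regular sequence, i.e.\ $G_1,G_2$ have no common factor and hence no common zero in $\P^1$. For the degree balance I would read off the Hilbert series of the complete intersection from the Koszul complex, $H_{A_P}(t)=(1-t^{d_1})(1-t^{d_2})/(1-t)^2$, whose top degree (the socle degree) is $d_1+d_2-2$; equating this with $l$ gives $\deg G_1+\deg G_2=l+2$, the first assertion.

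For the Waring-rank statement I would use the apolarity lemma for binary forms: since $Q\bullet\lambda^l=0$ iff $Q$ vanishes at the point of $\P^1$ determined by $\lambda$, a decomposition $P=\sum_{i=1}^r\lambda_i^l$ with distinct $\lambda_i$ exists precisely when $\Ann(P)$ contains the squarefree degree-$r$ form cutting out the corresponding $r$ distinct points; hence $\rk(P)$ equals the least degree of a reduced (squarefree) form in $\Ann(P)$. Because every form of degree $<d_2$ in $\Ann(P)=(G_1,G_2)$ is a multiple of $G_1$, two cases arise. If $G_1$ is squarefree then $G_1$ itself is the minimal reduced form, so $\rk(P)=d_1=\deg G_1$. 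If $G_1$ has a multiple root, then every multiple $hG_1$ inherits it and is non-reduced, so no reduced form of degree $<d_2$ exists; in degree $d_2$ the pencil spanned by $G_2$ and the multiples $G_1\cdot\C[\xi_0,\xi_1]_{d_2-d_1}$ is base-point free, its base locus being forced into $Z(G_1)\cap Z(G_2)=\emptyset$, so by a Bertini/discriminant argument its general member is reduced, giving $\rk(P)=d_2=\deg G_2$.

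The main obstacle is the passage from ``$A_P$ is Gorenstein'' to ``$\Ann(P)$ is a complete intersection with $d_1+d_2=l+2$'': establishing the Gorenstein symmetry (Macaulay duality) and then the codimension-$2$ rigidity that turns it into a two-generated ideal is the conceptual heart, everything else being bookkeeping. For a fully self-contained treatment one can bypass the homological machinery by arguing directly with the Hankel matrices: take $G_1$ of least degree in $\Ann(P)$, use the symmetry of the pairing to pin down $\dim\Ann(P)_d$ in each degree, and then extract a second generator $G_2$ together with coprimality and the degree relation from the exactness of the Koszul-type resolution of $(G_1,G_2)$. I would deliberately keep the apolarity-lemma step for last, since it is the only place where the distinction between \emph{distinct} linear forms (Waring rank) and an arbitrary zero-dimensional scheme (cactus rank) actually enters.
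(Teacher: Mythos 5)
Your proposal is correct, but note that the paper does not prove this theorem at all: it is quoted as Sylvester's classical structure theorem with references to Iarrobino--Kanev \cite[Theorem 1.44]{IK} and Dimca--Sticlaru \cite[Theorem 2.3]{DS2}, so you have supplied an argument where the paper supplies only a citation. Your route is the standard modern one, and all the key steps check out: $\Ann(P)$ is $\mg$-primary, hence grade $2$ and perfect, so Hilbert--Burch applies; Macaulay duality (the perfect pairing $(Q,Q')\mapsto (QQ')\bullet P$ on $A_P$) gives the Gorenstein property with socle in degree $l$; self-duality of the length-$2$ resolution forces exactly two minimal generators, which then form a regular sequence (no common factor, hence no common zero in $\P^1_\C$); and the Koszul Hilbert series pins down $d_1+d_2-2=l$. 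The Waring-rank dichotomy via ``$\rk(P)=$ least degree of a squarefree form in $\Ann(P)$'' (apolarity lemma), together with the observation that all elements of degree $<d_2$ are multiples of $G_1$ and that the degree-$d_2$ system $\C G_2 + G_1\cdot\C[\xi_0,\xi_1]_{d_2-d_1}$ is base-point free so its generic member is reduced by Bertini in characteristic $0$, is exactly how this is done in the cited sources. Two points would deserve a line each in a fully written version: the Bertini step should be made explicit for $\P^1_\C$ (e.g.\ the discriminant is not identically zero on a base-point-free system), and the borderline case $d_1=d_2$ should be noted (the labelling of $G_1$ is then ambiguous, but both cases of the dichotomy return the same number, so the statement is unaffected). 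What your approach buys over the paper's is a self-contained appendix; moreover it isolates precisely where squarefreeness enters, which illuminates the paper's later distinction between the Waring rank and the cactus rank $d_1(P)=\crk(P)$ (Remark \ref{d1-versus-waring} and Proposition \ref{d1=cactus}), where an arbitrary zero-dimensional scheme replaces a reduced one.
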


We will use the notation $d_i(P)\edf \deg(G_i)$ for $1\leq i\leq 2$. 

The dimensions of the homogeneous summands $\Ann(P)_d$ of the apolar ideal $\Ann(P)$ in the interesting case $d\leq \deg(P)$ can be easily computed in terms of the degrees $d_i(P)$:
 
\begin{re}\label{dim(Ann(P))Rem} Let $P$ be a non-trivial binary form. Then
$$
\dim(\Ann(P)_d)= \left\{
    \begin{array}{ll}
        2d-\deg(P) & \mbox{if } d_2(P)\leqslant d \leqslant \deg(P), \\
        d+1-d_1(P) & \mbox{if } d_1(P) \leqslant d < d_2(P), \\
        0 & \mbox{if } d < d_1(P).
    \end{array}
\right.
 $$

\end{re}
\begin{proof}

Any polynomial $\Pi\in \Ann(P)_d$ decomposes as 
$$
\Pi= \Pi_1 G_1+ \Pi_2 G_2
$$
where $\Pi_i\in \C[X_0,X_1]_{d-d_i(P)}$ (with the convention $\C[X_0,X_1]_s=0$ for $s<0$). This shows that the linear map
$$
\C[X_0,X_1]_{d-d_1(P)}\times \C[X_0,X_1]_{d-d_2(P)}\ni (\Pi_1,\Pi_2)\mapsto \Pi_1 G_1+\Pi_2 G_2
$$
is surjective.  The injectivity of this map is clear when $d< d_2(P)$, because in this case at least one of the two spaces $ \C[X_0,X_1]_{d-d_i(P)}$ vanishes. For  $d_2(P)\leq d\leq \deg(P)$ the injectivity follows taking into account that  two homogeneous polynomials $G_1$, $G_2$ have no commun zero and that $d_1(P)+d_2(P)=\deg(P)+2$.	
\end{proof}

Taking into account that $\deg(G_1)+\deg(G_2)=\deg(P)+2$, $\deg(G_1) \leqslant \deg(G_2)$ and that one of the two degrees $\deg(G_1)$, $\deg(G_2)$ coincides with the $\rk(P)$, it follows that (\cite[Corollary 2.5 ]{DS2}): 
\begin{re}\label{d1-versus-waring}
For any non-trivial binary form $P$ of degree $l$ we have
$$
d_1(P)=\left\{
\begin{array}{ccc}
\rk(P)&\rm if &\rk(P)\leq \frac{l+2}{2}\phantom{.}\\
l+2-\rk(P)&\rm if &	\rk(P)\geq \frac{l+2}{2}.
\end{array}
\right.
$$
	
\end{re}

\begin{re}\label{d1(P)=min(deg(H))} We have 
$$
d_1(P)=\min\{\deg(H)|\ H\hbox{ homogeneous, } H\in \Ann(P)\setminus\{0\}\}. 
$$
In other words 
$$d_1(P)=\min\{s\in\N^*|\   \C[\xi_0,\xi_1]_s\ni Q\mapsto Q\bullet P\in \C[X_0,X_1]_{l-s} \hbox{ is not injective}\}. $$
\end{re}

\begin{co}\label{d1(P)semic}
The map $\C[X_0,X_1]_{l}\setminus\{0\}\ni P\mapsto d_1(P)\in\N^*$ is lower semicontinuous. 	
\end{co}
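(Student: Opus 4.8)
The plan is to prove lower semicontinuity by showing that, for every integer $n\ge 1$, the sublevel set $\{P\mid d_1(P)\le n\}$ is (Zariski) closed in $\C[X_0,X_1]_l\setminus\{0\}$; since the sublevel sets being closed is exactly equivalent to the superlevel sets $\{P\mid d_1(P)>n\}$ being open, this is precisely the assertion. The starting point is the reformulation given in Remark \ref{d1(P)=min(deg(H))}: writing $\mu_{P,s}$ for the linear map $\C[\xi_0,\xi_1]_s\ni Q\mapsto Q\bullet P\in\C[X_0,X_1]_{l-s}$, we have $d_1(P)=\min\{s\in\N^*\mid \mu_{P,s}\ \text{is not injective}\}$, the minimum being taken over a nonempty set for $P\ne 0$.

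First I would record an elementary monotonicity fact: since $\Ann(P)$ is an ideal, $\Ann(P)_s\ne 0$ forces $\Ann(P)_{s+1}\supseteq \xi_0\cdot\Ann(P)_s\ne 0$, so the set of degrees $s$ with $\Ann(P)_s\ne 0$ contains all larger degrees and is therefore exactly $\{s\ge d_1(P)\}$. Consequently, for a fixed integer $n$,
$$
d_1(P)\le n \iff \Ann(P)_n\ne 0 \iff \ker\mu_{P,n}\ne 0 \iff \mathrm{rank}\,\mu_{P,n}\le n ,
$$
the last equivalence because $\dim\C[\xi_0,\xi_1]_n=n+1$. This reduces the whole statement to showing that $\{P\mid \mathrm{rank}\,\mu_{P,n}\le n\}$ is closed.

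The final step is to observe that, with respect to the monomial bases of $\C[\xi_0,\xi_1]_n$ and $\C[X_0,X_1]_{l-n}$, the matrix $M_n(P)$ of $\mu_{P,n}$ has entries that are \emph{linear} in the coefficients of $P$: indeed $\xi_0^a\xi_1^b\bullet P$ is a partial derivative of $P$, whose coefficients depend linearly on those of $P$. Hence the condition $\mathrm{rank}\,M_n(P)\le n$ is the simultaneous vanishing of all $(n+1)\times(n+1)$ minors of $M_n(P)$, which are polynomial functions of the coefficients of $P$; this cuts out a Zariski-closed, hence closed, subset, and doing this for each $n$ shows that every sublevel set is closed. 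I do not expect a genuine obstacle here: the only point requiring a moment of care is the monotonicity remark, which is what guarantees that ``$d_1\le n$'' is the single rank condition at level $n$ rather than a union of conditions over all $s\le n$; everything after that is the standard fact that rank-at-most-$k$ loci of matrices with polynomial entries are closed.
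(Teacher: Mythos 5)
Your proof is correct and follows essentially the same route as the paper, which states the corollary as an immediate consequence of Remark \ref{d1(P)=min(deg(H))}: the condition $d_1(P)\le n$ amounts to non-injectivity of the linear map $Q\mapsto Q\bullet P$ on $\C[\xi_0,\xi_1]_n$, whose matrix entries are linear in the coefficients of $P$, so the sublevel sets are cut out by the vanishing of the maximal minors and are Zariski closed. Your monotonicity observation (via multiplication by $\xi_0$ in the ideal $\Ann(P)$) is a clean way to reduce to a single rank condition, though even without it the sublevel set would be a finite union of closed loci and hence still closed.
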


Note that the Waring rank is in general {\it not} semicontinuous on $\C[X_0,X_1]_{l}\setminus\{0\}$. 

\begin{ex}\label{Wrk-comput}
\begin{enumerate}
	\item Let $m$, $n\in\N^*$. We have $\Ann(X_0^mX_1^n)=(\xi_0^{m+1},\xi_1^{n+1})$, so, by Theorem \ref{SlvThr},  $\rk(X_0^mX_1^n)=\max(n,m)+1$. 
	\item Let $P(X_0,X_1)=X_0^3X_1(X_0+X_1)$. Then $\Ann(P)=(X_1^3,G_2)$	where $G_2$ is a homogeneous polynomial of degree $4$. Then $\rk(P)=4$.
	\item Coming back to the Example \ref{xmym} we obtain $\rk(X_0^m X_1^m (X_0+X_1))=m+1$. 
\end{enumerate}
\end{ex}
The following theorem (see \cite[Theorem 2.4]{DS2}) concerns binary forms of $\rk(P) \geq  2$.

\begin{thr}\label{generic-rank}
	Let $P$ be a binary form of degree $l$ and Waring rank  $\rk(P) \geq  2$. The known inequality  $\rk(P)\leqslant \deg(P)$ becomes an equality if and only if $P\sim X_0^{l-1}X_1$. Moreover, for a generic binary form $P$ of degree $l$ we have 
	$$
	\rk(P)= \left [ \frac{l+2}{2} \right ].
	$$
\end{thr}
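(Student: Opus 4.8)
The plan is to deduce both assertions of Theorem \ref{generic-rank} from Sylvester's structure Theorem \ref{SlvThr}, keeping track of the minimal generator degree $d_1(P)\edf\deg(G_1)$ through Remarks \ref{d1-versus-waring}, \ref{d1(P)=min(deg(H))}, \ref{dim(Ann(P))Rem} and its lower semicontinuity (Corollary \ref{d1(P)semic}). First I would settle the characterisation of the extremal case. The implication $P\sim X_0^{l-1}X_1\Rightarrow\rk(P)=l$ is immediate from Example \ref{Wrk-comput}(1) with $m=l-1$, $n=1$, giving $\rk(X_0^{l-1}X_1)=\max(l-1,1)+1=l$ (using $l\geq 2$, which follows from $\rk(P)\geq 2$), together with the $\GL(2,\C)$-invariance of $\rk$. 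Conversely, assuming $\rk(P)=\deg(P)=l$, the hypothesis $\rk(P)\geq 2$ forces $l\geq 2$, so $\rk(P)=l\geq\frac{l+2}{2}$ and Remark \ref{d1-versus-waring} yields $d_1(P)=l+2-\rk(P)=2$, i.e. $\deg(G_1)=2$, $\deg(G_2)=l$.

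For $l>2$ the two degrees $2$ and $l$ are distinct, so the equality $\rk(P)=\deg(G_2)$ and the ``moreover'' clause of Theorem \ref{SlvThr} force $G_1$ to have a multiple root, whence $G_1=\lambda^2$ for a linear form $\lambda$. Since $d_1(P)=2$ we have $\Ann(P)_1=0$, and Remark \ref{dim(Ann(P))Rem} gives $\dim\Ann(P)_2=2+1-d_1(P)=1$, so $\Ann(P)_2=\C\lambda^2$; Lemma \ref{Lemma-Dimca}(3) then delivers $P\sim X_0^{l-1}X_1$. The single remaining value $l=2$ I would treat by hand: a binary quadric of rank $2$ cannot have a double root (that would give $P\sim X_0^2$ of rank $1$), hence it has two distinct roots and is therefore $\GL(2,\C)$-equivalent to $X_0X_1=X_0^{l-1}X_1$.

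For the generic statement I would first pin down $d_1(P)$. By Remark \ref{d1(P)=min(deg(H))}, $d_1(P)$ is the least $s$ for which the apolar map $\C[\xi_0,\xi_1]_s\ni Q\mapsto Q\bullet P\in\C[X_0,X_1]_{l-s}$ is not injective. Comparing the dimensions $s+1$ and $l-s+1$, this map cannot be injective once $s\geq[\frac{l}{2}]+1=[\frac{l+2}{2}]$, so $d_1(P)\leq[\frac{l+2}{2}]$ for all $P$ (equivalently, $d_1\leq d_2$ with $d_1+d_2=l+2$). For each $s\leq[\frac{l}{2}]$ (where source dimension does not exceed target dimension) injectivity is a Zariski-open condition cut out by the non-vanishing of maximal minors; by Corollary \ref{d1(P)semic} the set $\{d_1=[\frac{l+2}{2}]\}=\{d_1\geq[\frac{l+2}{2}]\}$ is open, and it is nonempty because the monomial $X_0^{[l/2]}X_1^{\lceil l/2\rceil}$, whose apolar ideal is $(\xi_0^{[l/2]+1},\xi_1^{\lceil l/2\rceil+1})$ by Example \ref{Wrk-comput}(1), has $d_1=[\frac{l+2}{2}]$. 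On the irreducible space $\C[X_0,X_1]_l$ this open set is therefore dense, so $d_1(P)=[\frac{l+2}{2}]$ generically. When $l$ is even, say $l=2k$, this forces $d_2=l+2-d_1=k+1=d_1$, so both generators have degree $k+1$ and Theorem \ref{SlvThr} gives $\rk(P)=k+1=[\frac{l+2}{2}]$ with no further work.

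The remaining, and hardest, case is $l$ odd, say $l=2k+1$: here $d_1=k+1<d_2=k+2$, and Remark \ref{dim(Ann(P))Rem} shows $\dim\Ann(P)_{d_1}=1$, so $\Ann(P)_{k+1}=\C G_1$ and one genuinely needs $G_1$ to have \emph{distinct} roots in order to invoke the ``moreover'' clause and conclude $\rk(P)=d_1=[\frac{l+2}{2}]$. I would argue that on the dense open locus $\{d_1=k+1\}$ the assignment $P\mapsto[G_1(P)]\in\P(\C[\xi_0,\xi_1]_{k+1})$ is well defined and the condition ``$G_1(P)$ has simple roots'' is open (complement of the discriminant), then establish nonemptiness by exhibiting a form of degree $l$ with $d_1=k+1$ whose degree-$(k+1)$ apolar generator has distinct roots — for instance a generic apolar form $H=\prod_{i=1}^{k+1}(\beta_i\xi_0-\alpha_i\xi_1)$ with distinct roots and a $P$ for which $\Ann(P)$ begins exactly in degree $k+1$ with $H$. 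The main obstacle is precisely this last step: organising the genericity so that $d_1$ attains its maximal value \emph{and} the minimal generator has simple roots on one common dense open set, and verifying nonemptiness (not merely openness) of these loci — the even and odd parities of $l$ behaving quite differently, since for even $l$ the conclusion is automatic while for odd $l$ it hinges on the distinct-roots condition for $G_1$.
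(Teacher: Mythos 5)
Your argument for the first assertion is complete and correct, and uses only tools stated in the paper: $\rk(X_0^{l-1}X_1)=l$ from Example \ref{Wrk-comput}(1), the deduction $d_1(P)=l+2-\rk(P)=2$ from Remark \ref{d1-versus-waring}, the forced multiple root of $G_1$ via the dichotomy in Theorem \ref{SlvThr}, the computation $\dim\Ann(P)_2=1$ from Remark \ref{dim(Ann(P))Rem}, Lemma \ref{Lemma-Dimca}(3), and the hand check at $l=2$. The even-degree half of the generic statement is also correct: openness of $\{d_1=[\frac{l+2}{2}]\}$ from Corollary \ref{d1(P)semic}, nonemptiness via the monomial $X_0^kX_1^k$, and $d_1=d_2=k+1$ renders the simple-roots dichotomy moot. (For calibration: the paper itself does not prove this theorem; it imports it from \cite[Theorem 2.4]{DS2}, so your proposal is judged on its own merits rather than against an internal proof.)

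The genuine gap --- which you flag yourself --- is the nonemptiness step in the odd case $l=2k+1$. Openness of the two conditions ($d_1=k+1$, and the minimal generator squarefree) does not suffice: you must exhibit one form satisfying both, and your monomial witness fails exactly here, since $X_0^kX_1^{k+1}$ has $G_1=\xi_0^{k+1}$ with a $(k+1)$-fold root and hence rank $k+2$. Your sketch (``a generic apolar form $H$ with distinct roots and a $P$ for which $\Ann(P)$ begins exactly in degree $k+1$ with $H$'') names the needed object without producing it, so as written your argument only yields $\rk(P)\in\{k+1,\,k+2\}$ generically for odd $l$. The gap is fixable by a concrete computation: take $\omega$ a primitive $(k+1)$-st root of unity and $P=\sum_{i=0}^{k}(X_0+\omega^iX_1)^{2k+1}$. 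For $Q$ of degree $m$ one has $Q\bullet(X_0+aX_1)^l=\frac{l!}{(l-m)!}\,Q(1,a)(X_0+aX_1)^{l-m}$, and the powers $(X_0+\omega^iX_1)^{l-m}$ for $i=0,\dots,k$ are linearly independent (Vandermonde) when $l-m\geq k$; taking $m=k$ gives $\Ann(P)_k=0$, i.e. $d_1(P)=k+1$, and taking $m=k+1$ gives $\Ann(P)_{k+1}=\C\,(\xi_1^{k+1}-\xi_0^{k+1})$, whose generator has simple roots, whence $\rk(P)=k+1=[\frac{l+2}{2}]$ on a nonempty Zariski-open set. Alternatively one can argue via the secant-variety dimension count that a generic odd-degree form is a sum of $k+1$ powers; but some such verification must be supplied for the proof to close.
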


We are grateful to Alexandru Dimca for pointing me out the following result, which plays an important role in this paper:
 \begin{pr}\label{d1=cactus}
 Let 	$P\in \C[X_0,X_1]_{l}\setminus\{0\}$. Then $d_1(P)=\crk(P)$.
 \end{pr}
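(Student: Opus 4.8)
The plan is to establish the equality $d_1(P)=\crk(P)$ by proving the two inequalities separately, using the characterisation of $d_1(P)$ from Remark \ref{d1(P)=min(deg(H))} as the minimal degree of a nonzero homogeneous element of $\Ann(P)$, and the definition of $\crk(P)$ as the minimal length of a $0$-dimensional subscheme $Z\subset\P^1_\C$ whose saturated ideal $I_Z$ is contained in $\Ann(P)$.

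For the inequality $\crk(P)\leq d_1(P)$, I would first invoke the Sylvester structure Theorem \ref{SlvThr}: the ideal $\Ann(P)$ is generated by two homogeneous polynomials $G_1$, $G_2$ with no common zeros and $\deg(G_1)=d_1(P)$. Since $G_1$ is a binary form in $\C[\xi_0,\xi_1]$ of degree $d_1(P)$, its zero locus defines a $0$-dimensional subscheme $Z\subset\P^1_\C$ of length exactly $\deg(G_1)=d_1(P)$. Because $\P^1_\C$ is a smooth curve, the principal ideal $(G_1)$ is already saturated, and clearly $(G_1)\subset\Ang(P)$; hence this $Z$ is admissible in the definition of cactus rank, giving $\crk(P)\leq l(Z)=d_1(P)$.

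For the reverse inequality $d_1(P)\leq\crk(P)$, I would take a $0$-dimensional subscheme $Z$ realising the cactus rank, so $l(Z)=\crk(P)$ and $I_Z\subseteq\Ann(P)$ with $I_Z$ saturated. On $\P^1_\C$ every saturated ideal of a length-$m$ subscheme is principal, generated by a single binary form $H$ of degree $m=l(Z)=\crk(P)$. Then $H\in I_Z\subseteq\Ann(P)$ is a nonzero homogeneous element of degree $\crk(P)$, so by Remark \ref{d1(P)=min(deg(H))} we get $d_1(P)\leq\deg(H)=\crk(P)$.

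The main obstacle is the homological-algebra input that a saturated homogeneous ideal defining a $0$-dimensional subscheme of $\P^1_\C$ is necessarily principal, generated in degree equal to the length of the subscheme; this relies on $\C[\xi_0,\xi_1]$ being a two-variable polynomial ring, so that Cohen--Macaulayness of the coordinate ring of $Z$ forces the ideal to be a complete intersection, i.e.\ principal. I would state this as the key geometric fact and cite the standard theory (it is implicit in the Sylvester picture, where $\Ann(P)$ is itself a complete intersection). Once both directions are in hand, combining $\crk(P)\leq d_1(P)$ and $d_1(P)\leq\crk(P)$ yields the desired equality.
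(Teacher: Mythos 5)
Your proof is correct and takes essentially the same route as the paper's: both arguments rest on the correspondence between degree-$d$ binary forms and length-$d$ zero-dimensional subschemes of $\P^1_\C$ (principal ideals being saturated, and saturated ideals of such subschemes being principal of degree equal to the length), combined with the characterisation of $d_1(P)$ in Remark \ref{d1(P)=min(deg(H))}. The paper merely compresses your two inequalities into the single observation that $[G]\mapsto V(G)$ is a bijection onto the set of length-$\deg(G)$ subschemes.
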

 \begin{proof}
 For a  homogeneous  ideal $I\subset \C[X_0,X_1]$ let $V(I)$ stand for subscheme of $\P^1_\C$ associated with $I$ \cite[section II.2]{Ha}. Note that the assignment $[G] \mapsto V(G)$ defines  bijection between $\P^1(\C[X_0,X_1]_d)$ and the set of $0$-dimensional subschemes    $V\subset \P^1_\C$ of length $d$. It suffices to note that the ideal $(G)$ generated by a  polynomial $G\in  \C[X_0,X_1]_{d}\setminus\{0\}$ is saturated and to take into account Remark \ref{d1(P)=min(deg(H))}.	
 \end{proof}
 Taking into account Proposition \ref {d1=cactus} and Remark \ref{d1-versus-waring} it follows:
 \begin{re}\label{crk-is-lower-semic}
 The map $\crk:	 \C[X_0,X_1]_{l}\setminus\{0\}\to \N^*$ given by the  cactus rank is lower semicontinuous. The cactus rank and the waring rank are related by the formula
 $$
\crk(P)=\left\{
\begin{array}{ccc}
\rk(P)&\rm if &\rk(P)\leq \frac{l+2}{2}\phantom{.}\\
l+2-\rk(P)&\rm if &	\rk(P)\geq \frac{l+2}{2}.
\end{array}
\right.
$$

 \end{re}

This implies:

 \begin{re}\label{crk=1}
Let $P$ be a non-trivial binary form of degree $l\geqslant 2$. Then $P\sim X_0^l$ if and only if $\crk(P)=1$.	\end{re}

Taking into account Theorem \ref{generic-rank}, we also infer:

\begin{re}\label{partition-rem} Put $P^l_r\edf \{[P]\in \P(\C[X_0,X_1]_l)|\ \crk(P)=r\}$. 

Then $(P_l^r)_{1\leq r\leq [\frac{l+2}{2}]}$ is a partition of the projective space $\P(\C[X_0,X_1]_l)$ in locally closed subsets. For $1\leq s\leq[\frac{l+2}{2}]$ the union  $\union_{1\leq r\leq s}P^l_r$ is Zariski closed and the union  $\union_{s\leq r\leq [\frac{l+2}{2}]}P^l_r$ is Zariski open.  The highest index stratum  $P^l_{[\frac{l+2}{2}]}$ is Zariski open and non-empty.   	
\end{re}

Note that 
$$
[P]\in P^l_{[\frac{l+2}{2}]}\Leftrightarrow 
\left\{
\begin{array}{lcc} 
\rk(P)=\frac{l+2}{2}&\rm if& l\in 2\N^*, \\
\rk(P)\in \left\{\frac{l+1}{2},\frac{l+3}{2}\right\}&\rm if& l\in 2\N+1.
\end{array}
\right.
$$
Remark \ref{partition-rem} shows that $(P_l^r)_{1\leq r\leq [\frac{l+2}{2}]}$ is a stratification of  $\P(\C[X_0,X_1]_l)$ (in the sense of  \cite[Definition 5.28.3, part 1]{Stacks}) with locally closed strata. This stratification will be called {\it the cactus rank stratification} of $\P(\C[X_0,X_1]_l)$.\\

There is a lower bound for the Waring rank of binary forms based in terms of their factorization to a product of linear forms (See \cite[Theorem 2.1]{Ne}). Let $P\in \C[X_0,X_1]_l\setminus\{0\}$ be a nonzero binary form of degree $l$ which   factorizes as
	$$
	P(X_0,X_1)\edf \prod_{i=0}^s \lambda_i(X_0,X_1)^{n_i},
	$$
	where $s\geqslant 1$, $\lambda_i$'s are pairwise linearly independent linear forms and $n_i\geqslant 1$. Then: 

\begin{equation}\label{rk-less-than-max-ni+1}
\rk(P)\geqslant \max(n_0,\dots,n_s)+1. 
\end{equation}

For example, let $P=\lambda_1^{d-2}\lambda_2\lambda_3$, where $d\geqslant 3$ and  the $\lambda_i$'s are pairwise linearly independent linear forms. Combining the inequality (\ref{rk-less-than-max-ni+1}) with \cite[Theorem 2.4]{DS2}, we obtain 
\begin{equation}\label{rk-lambda1{d-2}lamnda2lambda3}
\rk(\lambda_1^{d-2}\lambda_2\lambda_3)=d-1.	
\end{equation}

\subsubsection{The cactus rank stratification of \texorpdfstring{$\P(\C[X_0,X_1]_l)$}{C} for small \texorpdfstring{$l$}{l} }
\label{cactus-rk-strat-small-l}

In this section we will give explicit geometric descriptions of the  cactus rank stratification    of $\P(\C[X_0,X_1]_l)$ defined in the previous section.

We recall that the discriminant \cite[p. 23]{Mu} of binary forms defines a homogeneous map $\Delta: \C[X_0,X_1]_l\to \C$ of degree $2(l-1)$. A binary form $P$ has multiple roots if and only if $\Delta(P)=0$. \\
For $l\in\N^*$ we will denote by 
$$v_l: \P(\C[X_0,X_1]_1)\simeq \P^1_\C\to \P(\C[X_0,X_1]_l)$$
the Veronese map $[\lambda]\mapsto [\lambda^l]$.
\\
 
\paragraph{\it The case $l=2$.}
The level sets of the cactus rank  on the plane $\P(\C[X_0,X_1]_2)\simeq\P^2$ are:
$$
\crk^{-1}(1)=Z(\Delta)=\im(v_2),$$
$$ \crk^{-1}(2)=\P(\C[X_0,X_1]_2)\setminus Z(\Delta).
$$
Note that $\Delta(a_0X_0^2+ 2a_1 X_0X_1+a_2 X_1^2)=a_0 a_1-a_1^2$, so the closed stratum $$\crk^{-1}(1)=Z(\Delta)$$ is a conic. 
\\

\paragraph{\it The case $l=3$.} By Remark \ref{crk=1} the level sets of the cactus rank when on the projective space $\P(\C[X_0,X_1]_3)$  of binary cubic forms   are: 
$$
\crk^{-1}(1)=v_3(\P^1_\C),\ \crk^{-1}(2)=\P(\C[X_0,X_1]_3)\setminus \im(v_3).$$
\vspace{1mm}
\paragraph{\it The case $l=4$.} Recall that we have two $\GL(2,\C)$-invariant polynomials $g_2$ and $g_3$ on the space $\C[X_0,X_1]_4$ of binary quartic forms defined by the formula

$$\begin{vmatrix} a_0 &a_1 & a_2 +2 t \\ a_1 &  a_2-t &  a_3 \\  a_2+2t &  a_3 &  a_4 \end{vmatrix}=4t^3-g_2(Q)t+g_3(Q),$$
where
$$Q(X_0,X_1)=a_0 X_0^4+4a_1 X_0^3 X_1+6a_2 X_0^2 X_1^2 +4a_3 X_0 X_1^3 + a_4 X_1^4.$$
(see \cite[Proposition 1.25, p.27]{Mu}). The explicit formulae for the $g_i$ are
\begin{equation*}
\begin{split}
g_2(Q)&= a_0 a_4-4a_1 a_3+3a_2^2,\\
g_3(Q)&=a_0a_2a_4+2a_1a_2a_3 -a_2^3-a_0a_3^2-a_1^2a_4.	
\end{split}	
\end{equation*}
Moreover, by   \cite[Remark 1.26]{Mu}, we have the following identity
\begin{equation}\label{Delta-gi}
\Delta=g_2^3-27 g_3^2.	
\end{equation}

The $J$ invariant is defined as 
$$
J(Q)\edf \frac{g_2(Q)^3}{\Delta(Q)}$$
which is a regular function on  $\P(\C[X_0,X_1]_4)\setminus Z(\Delta)$. The fibres of $J$ are precisely the orbits of the  $\GL(2,\C)$-action on this   Zariski open set. 

We will use the following table which computes the Waring, the cactus rank and the value of $g_3$ for the canonical forms of binary quartics: 
 $$
 \begin{array}{|c|c|c|c|}
 \hline
 Q&\rk(Q) & \crk(Q)&g_3(Q)	\\ \hline
 X_0^4& 1& 1& 0 \\ \hline
 X_0^3X_1& 4 & 2 & 0\\ \hline
 X_0^2X_1^2& 3 & 3 & -\frac{1}{6^3} \\ \hline
 X_0^2X_1(X_0+X_1)& 3 & 3 & -\frac{1}{6^3}\\ \hline
 X_0X_1(X_0+X_1)(X_0+tX_1),\ t\in\{-1,\frac{1}{2},2\}& 2 & 2 & 0 \\ \hline
  X_0X_1(X_0+X_1)(X_0+tX_1),\ t\not\in\{-1,\frac{1}{2},2\} & 3 & 3 &\frac{(t-2)(t+1)(2t-1)}{144}\\ \hline
 \end{array}
 $$

For the Waring rank in the first three rows we have used Example \ref{Wrk-comput} and formula (\ref{rk-lambda1{d-2}lamnda2lambda3}). For the  Waring rank in the last two rows  we have used \cite[Theorem 3.1]{DS1} which stated that the Waring rank of a quartic form without multiple roots is either 3, when $J(Q)\ne 1$, or 2, when $J(Q)= 1$. By formula (\ref{Delta-gi}) we have $J(Q)= 1$ if and only if $g_3(Q)=0$. The cactus rank column has been filled in using Remark \ref{crk-is-lower-semic}; the values in the third column have been obtained by direct computation.

We can now prove
\begin{re}\label{crk-strat-l=4} The strata of the cactus rank stratification of the projective space $\P(\C[X_0,X_1]_4)$ of quartic forms are given by
$$
\crk^{-1}(1)=\im(v_4),
$$
$$
\crk^{-1}(2)=Z(g_3)\setminus \im(v_4),
$$
$$\crk^{-1}(3)=\P^3(\C) \setminus Z(g_3).
$$	
\end{re}
\begin{proof}
The first equality follows again by 	Remark \ref{crk=1}. The second and the third equalities follow by comparing the second and the third columns of the above table and taking into account the $\GL(2,\C)$-equivariance of $g_3$ and the $\GL(2,\C)$-invariance of the cactus rank.
\end{proof}

Therefore 

\begin{re} 
The filtration of $\P(\C[X_0,X_1]_4)$ with Zariski closed subsets associated with its cactus rank stratification  is
$$
\P(\C[X_0,X_1]_4)\supset   Z(g_3)\supset \im(v_4).
$$
 \end{re}

\end{document}